\renewcommand{\le}{\leqslant}
\renewcommand{\ge}{\geqslant}
\definecolor{mno}{rgb}{0.5,0.1,0.5}
\newcommand{\R}{\mathds R}
\newcommand{\I}{\mathds 1}
\newtheorem{theorem}{Theorem}[section]
\newtheorem{lemma}[theorem]{Lemma}
\newtheorem{proposition}[theorem]{Proposition}
\newtheorem{corollary}[theorem]{Corollary}
\theoremstyle{definition}
\newtheorem{example}[theorem]{Example}
\newtheorem{remark}[theorem]{Remark}
\begin{document}
\allowdisplaybreaks
\title[Nonlocal Dirichlet Forms With Finite Range Jumps or Large Jumps]
{\bfseries Functional inequalities for Nonlocal Dirichlet Forms With
Finite Range Jumps or Large Jumps}

\author{Xin Chen \quad Jian Wang}
\thanks{\emph{X.\ Chen:}
   Grupo de Fisica Matematica, Universidade de Lisboa, Av Prof Gama Pinto 2, 1649-003 Lisbon, Portugal. \texttt{chenxin\_217@hotmail.com}}

  \thanks{\emph{J.\ Wang:}
   School of Mathematics and Computer Science, Fujian Normal University, 350007 Fuzhou, P.R. China. \texttt{jianwang@fjnu.edu.cn}}

\date{}

\maketitle

\begin{abstract} The paper is a continuation of our paper \cite{WW,
CW}, and it studies functional inequalities for non-local Dirichlet
forms with finite range jumps or large jumps. Let $\alpha\in(0,2)$
and $\mu_V(dx)=C_Ve^{-V(x)}\,dx$ be a probability measure. We present explicit and sharp criteria for the
Poincar\'{e} inequality and the super Poincar\'{e} inequality of the
following non-local Dirichlet form with finite range jump
$$\mathscr{E}_{\alpha, V}(f,f):= \frac{1}{2}\iint_{\{|x-y|\le
1\}}\frac{(f(x)-f(y))^2}{|x-y|^{d+\alpha}}\,dy\, \mu_V(dx);$$ on the
other hand, we give sharp  criteria for the Poincar\'{e}
inequality of the non-local Dirichlet form with large jump as
follows
$$\mathscr{D}_{\alpha, V}(f,f):= \frac{1}{2}\iint_{\{|x-y|>
1\}}\frac{(f(x)-f(y))^2}{|x-y|^{d+\alpha}}\,dy\, \mu_V(dx),$$ and
also derive that the super Poincar\'{e} inequality does not hold for
$\mathscr{D}_{\alpha, V}$. To obtain these results above, some new
approaches and ideas completely different from \cite{WW, CW} are
required, e.g.\ local Poincar\'{e} inequality for
$\mathscr{E}_{\alpha, V}$ and $\mathscr{D}_{\alpha, V}$, and the
Lyapunov condition for $\mathscr{E}_{\alpha, V}$. In particular,
the results about $\mathscr{E}_{\alpha, V}$ show that the
probability measure fulfilling Poincar\'{e} inequality and super
Poincar\'{e} inequality for non-local Dirichlet form with finite
range jump and that for local Dirichlet form enjoy some similar
properties; on the other hand, the assertions for
$\mathscr{D}_{\alpha, V}$ indicate that even if functional
inequalities for non-local Dirichlet form heavily depend on the
density of large jump in the associated L\'{e}vy measure, the
corresponding small jump plays an important role for local super
Poincar\'{e} inequality, which is inevitable to derive super
Poincar\'{e} inequality.

\medskip

\noindent\textbf{Keywords:} Non-local Dirichlet form with finite
range jump; non-local Dirichlet form with large jump; (super)
Poincar\'{e} inequality; local (super) Poincar\'{e} inequality;
Lyapunov condition; concentration of measure

\medskip

\noindent \textbf{MSC 2010:} 60G51; 60G52; 60J25; 60J75.
\end{abstract}
\section{Introduction and Main Results}\label{sec1}
Let $C_b^\infty(\R^d)$ be the set of smooth functions with
bounded derivatives of every order. This paper is concerned with the following two bilinear forms:
\begin{equation*}\mathscr{E}_{\alpha, V}(f,f):=
\frac{1}{2}\iint_{\{|x-y|\le
1\}}\frac{(f(x)-f(y))^2}{|x-y|^{d+\alpha}}\,dy\, \mu_V(dx),\quad
f\in C_b^\infty(\R^d),\end{equation*}
and
\begin{equation*}\mathscr{D}_{\alpha, V}(f,f):=
\frac{1}{2}\iint_{\{|x-y|>
1\}}\frac{(f(x)-f(y))^2}{|x-y|^{d+\alpha}}\,dy\, \mu_V(dx),\quad
f\in C_b^\infty(\R^d),\end{equation*}
where $\alpha\in(0,2)$, $V$ is
a locally bounded Borel measurable function such that
$e^{-V}\in L^1(dx)$, and
\begin{equation*}
\mu_V(dx):=\frac{1}{\int
e^{-V(x)}\,dx}e^{-V(x)}\,dx=:C_Ve^{-V(x)}\,dx
\end{equation*}
 is a probability
measure on $(\R^d, \mathscr{B}(\R^d))$. According to \cite[Theorem
2.1]{CW}, both $(\mathscr{E}_{\alpha, V}, C_b^\infty(\R^d))$ and
$(\mathscr{D}_{\alpha, V}, C_b^\infty(\R^d))$ are
closable bilinear forms on $L^2(\mu_V)$. Therefore, letting
$\mathscr{D}(\mathscr{E}_{\alpha,V})$ and
$\mathscr{D}(\mathscr{D}_{\alpha,V})$  be the closure of
$C_b^\infty(\R^d)$ under the norms $$
\|f\|_{\mathscr{E}_{\alpha,V,1}}:=\big(\|f\|^2_{L^2(\mu_V)}+
\mathscr{E}_{\alpha,V}(f,f)\big)^{1/2}$$
and
$$
\|f\|_{\mathscr{D}_{\alpha,V,1}}:=\big(\|f\|^2_{L^2(\mu_V)}+
\mathscr{D}_{\alpha,V}(f,f)\big)^{1/2}$$ respectively,
$(\mathscr{E}_{\alpha, V}, \mathscr{D}(\mathscr{E}_{\alpha,V}))$ and
$(\mathscr{D}_{\alpha, V}, \mathscr{D}(\mathscr{D}_{\alpha,V}))$ are
regular Dirichlet forms on $L^2(\mu_V)$. The Hunt process
associated with  $(\mathscr{E}_{\alpha, V},
\mathscr{D}(\mathscr{E}_{\alpha,V}))$ is an $\R^d$-valued symmetric
jump process with the finite range jump, while the associated Hunt
process for $(\mathscr{D}_{\alpha, V},
\mathscr{D}(\mathscr{D}_{\alpha,V}))$ is an $\R^d$-valued symmetric
jump process only with the jump larger than 1.

The purpose of this
paper is to study the criteria about Poincar\'{e} inequality and
super Poincar\'{e} inequality for $(\mathscr{E}_{\alpha, V},
\mathscr{D}(\mathscr{E}_{\alpha,V}))$ and $(\mathscr{D}_{\alpha, V},
\mathscr{D}(\mathscr{D}_{\alpha,V}))$.
Recently, functional inequalities have been
established in \cite{WW,CW} for non-local Dirichlet form whose jump kernel has
full support on $\R^d$, i.e.
\begin{equation}\label{dir1}\aligned
D_{\rho,V}(f,f):=&\frac{1}{2}\iint
{\big(f(x)-f(y)\big)^2}\rho(|x-y|)\,dy\,\mu_V(dx),
\endaligned
\end{equation}
where $\rho$ is a strictly positive
measurable function on $\R_+:=(0,\infty)$ such that
\begin{equation*}
\int_{(0,\infty)} \rho(r)\big(1\wedge r^2\big)r^{d-1}\,dr<\infty.
\end{equation*}

\medskip

Comparing with the methods of obtaining Poincar\'{e} type inequalities for $D_{\rho,V}$ in \cite{WW,CW},
in order to get the corresponding functional inequalities for $\mathscr{E}_{\alpha, V}$ and $\mathscr{D}_{\alpha,V}$, there are two  fundamental  differences:
\begin{itemize}
\item [(1)] The efficient approach to yield functional inequalities for $D_{\rho,V}$ is
to check the Lyapunov type condition for the generator associated with $D_{\rho,V}$, which heavily depends on the property of $\rho$. For $D_{\rho,V}$ the Lyapunov function $\phi$ we choose in \cite{WW,CW} is of the form $\phi(x)=|x|^\beta$ with some constant $\beta\in(0,1)$.
   Similar to \cite{CW}, one can apply this test function  $\phi$ into the generator of $\mathscr{D}_{\alpha, V}$, and verify the corresponding
    Lyapunov type condition; however, this test function $\phi$ is not useful for the generator of $\mathscr{E}_{\alpha, V}$.

\item[(2)] Another point on obtaining Poincar\'{e} inequality and super Poincar\'{e} inequality for $D_{\rho,V}$ is to
prove the local Poincar\'{e} inequality and the local super Poincar\'{e} inequality. The local super Poincar\'{e} inequality for $D_{\rho,V}$ is derived by the classical Nash inequality of
     Besov space on $\R^d$ and bounded perturbation of functional inequalities for non-local Dirichlet
form; while the local Poincar\'{e} inequality is easily obtained
for $D_{\rho,V}$  by applying the Cauchy-Swarchz inequality. However we are
unable to use these approaches here, since the jump kernel
is not positive pointwise for both $\mathscr{E}_{\alpha, V}$ and
$\mathscr{D}_{\alpha, V}$.
\end{itemize}

Due to the above differences and difficulties, obtaining the
criteria for Poincar\'{e} inequality and super Poincar\'{e}
inequality for $\mathscr{E}_{\alpha, V}$ and $\mathscr{D}_{\alpha,V}$ requires new approaches and ideas, which include the
following three points:
\begin{itemize}
\item [(1)] The new choice of  Lyapunov function for the generator associated with $\mathscr{E}_{\alpha,V}$, which is efficient to yield the
Lyapunov conditions for  $\mathscr{E}_{\alpha,V}$, and is completely
different from that for $D_{\rho,V}$. (See Lemma \ref{le3.3}.)

\item[(2)] The local Poincar\'{e} inequality for both
$\mathscr{E}_{\alpha,V}$ and $\mathscr{D}_{\alpha,V}$ (see
Propositions \ref{pr2.2} and \ref{pr2.4}), and the local super
Poincar\'{e} inequality for $\mathscr{E}_{\alpha,V}$ (not for
$\mathscr{D}_{\alpha,V}$), where
    we will use some results on the Sobolev embedding theorem in Besov space, e.g.\
    \cite{BK}. (See Proposition \ref{pr2.1}.)

\item[(3)] To show that the super  Poincar\'{e} inequality does not hold for
$\mathscr{D}_{\alpha,V}$ with any locally bounded $V$. (See Section
\ref{section4}.)

\end{itemize}

We are now in a position to state the main results in our paper,
which will be split into the following two parts.

\subsection{Functional Inequalities for $\mathscr{E}_{\alpha,V}$}
For any $r>0$, define
\begin{equation}\label{e0}
k(r):=\inf_{|x|\leqslant r+1}e^{-V(x)},\quad
K(r):=\sup_{|x|\leqslant r}e^{-V(x)}.
\end{equation}

\begin{theorem}\label{th3.1}
$(1)$ Suppose that
\begin{equation}\label{th3.1.1}
\liminf_{|x|\rightarrow \infty}\frac{\inf_{|x|-1 \le |z|\leqslant
|x|-{1}/{2}}e^{-V(z)}} {\sup_{|x| \le |z|\le |x|+1}e^{-V(z)}}
>\frac{1}{\alpha}2^{2d+1}(e+e^{{1}/{2}})(2^{\alpha}-1).\end{equation}
Then the following Poincar\'{e} inequality
\begin{equation}\label{e12}
\begin{split}
\mu_V\big(f^2\big)\leqslant C_1 \mathscr{E}_{\alpha,V}(f,f),\quad f
\in C_b^{\infty}(\R^d),\ \mu_V(f)=0
\end{split}
\end{equation}
holds for some constant $C_1>0$.

$(2)$ If
\begin{equation}\label{e12aa}
\liminf_{|x|\rightarrow \infty}\frac{\inf_{|x|-1 \le |z|\leqslant
|x|-{1}/{2}}e^{-V(z)}} {\sup_{|x| \le |z|\le |x|+1}e^{-V(z)}}
=\infty,
\end{equation}
then there exist constants $C_2$, $C_3>0$ such that the following super
Poincar\'{e} inequality holds
\begin{equation}\label{e13}
\begin{split}
\mu_V(f^2)\leqslant s\mathscr{E}_{\alpha,V}(f,f)+\beta(s)\mu_V(|f|)^2,\quad s>0, f\in C_b^\infty(\R^d),
\end{split}
\end{equation}
where
\begin{equation}\label{e13a}
\begin{split}
\beta(s)=&C_2\Big((1+s^{-{d}/{\alpha}})[\Phi^{-1}(C_3s^{-1})]^{^{d+{d^2}/{\alpha}}}\\
&\qquad\qquad \qquad \times
[K(\Phi^{-1}(C_3s^{-1}))]^{^{1+{d}/{\alpha}}}
[k(\Phi^{-1}(C_3s^{-1}))]^{^{-2-{d}/{\alpha}}}\Big)
\end{split}
\end{equation} and
\begin{equation*}
\Phi(r):=\inf_{|x|\geqslant r}\Big(e^{V(x)}\inf_{|x|-1\le |z|\leqslant |x|-{1}/{2}}e^{-V(z)}\Big).
\end{equation*}
\end{theorem}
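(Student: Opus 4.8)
The plan is to reduce both assertions to the combination of two ingredients: a \emph{local Poincar\'e / local super Poincar\'e inequality} for $\mathscr{E}_{\alpha,V}$ on balls (Propositions \ref{pr2.1} and \ref{pr2.2}), and a \emph{Lyapunov-type condition} for the generator of $\mathscr{E}_{\alpha,V}$ built from a suitable test function $\phi$ (Lemma \ref{le3.3}). For part (1), the strategy is the classical Lyapunov-function-to-Poincar\'e scheme adapted to the non-local, finite-range setting: given a function $\phi\ge 1$ with $\mathscr{L}_{\alpha,V}\phi \le -c\,\phi + b\,\mathbf{1}_{B_R}$ outside a large ball, one writes, for $f$ with $\mu_V(f)=0$,
\begin{equation*}
\mu_V(f^2) \le \mu_V\Big(\frac{-\mathscr{L}_{\alpha,V}\phi}{c\,\phi}f^2\Big) + \frac{b}{c}\int_{B_R}\frac{f^2}{\phi}\,d\mu_V,
\end{equation*}
bounds the first term by $\tfrac1c\,\mathscr{E}_{\alpha,V}(f,f)$ via the symmetry of the form together with the elementary inequality $(f(x)-f(y))\big(\tfrac{f(x)}{\phi(x)}-\tfrac{f(y)}{\phi(y)}\big)\le \tfrac{(f(x)-f(y))^2}{\phi(x)}$-type manipulation, and controls the second, \emph{local}, term by the local Poincar\'e inequality on $B_R$ plus the usual covering/variance argument. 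The quantitative hypothesis \eqref{th3.1.1} is exactly what is needed to make the constant $c$ in the Lyapunov inequality strictly positive for the explicit $\phi$ of Lemma \ref{le3.3}; the numerology $\tfrac1\alpha 2^{2d+1}(e+e^{1/2})(2^\alpha-1)$ should emerge by plugging $\phi$ into $\mathscr{L}_{\alpha,V}$ and estimating the finite-range jump integral over the annulus $\{|x|-1\le |z|\le |x|-1/2\}$ against the outgoing part.

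For part (2), the plan is to upgrade this to the super Poincar\'e inequality by a localization-and-truncation argument. Under the stronger hypothesis \eqref{e12aa} the Lyapunov ratio can be made arbitrarily large, so for every $R$ one gets a Lyapunov inequality with a rate $c=c(R)\to\infty$; splitting space into $B_R$ and its complement, on $B_R$ one invokes the \emph{local super Poincar\'e inequality} (Proposition \ref{pr2.1}, itself coming from the Besov/Nash embedding of \cite{BK} and a bounded-perturbation argument), and on $B_R^c$ one uses the Lyapunov bound to absorb $\mu_V(f^2 \mathbf 1_{B_R^c})$ into $s\,\mathscr{E}_{\alpha,V}(f,f)$ with $s \sim 1/c(R)$. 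Optimizing the relation between $s$ and $R$ (equivalently inverting the function $\Phi$, which records exactly the Lyapunov rate as a function of the radius) produces the explicit $\beta(s)$ in \eqref{e13a}, with the powers $d/\alpha$, $d+d^2/\alpha$, etc., coming from the scaling of the Nash constant for the restricted $\alpha$-stable-type Dirichlet form on $B_R$ together with the oscillation factors $K(R)$, $k(R)$ that convert Lebesgue measure on $B_R$ to $\mu_V$ on $B_R$ and back.

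The main obstacle is the \emph{local} step rather than the Lyapunov bookkeeping. Because the jump kernel $|x-y|^{-d-\alpha}\mathbf 1_{\{|x-y|\le 1\}}$ is not pointwise comparable to the full $\alpha$-stable kernel — it is literally zero for $|x-y|>1$ — one cannot simply quote existing Poincar\'e/Nash inequalities for the fractional Laplacian; one must show that, restricted to a ball of radius $R$, the truncated form still controls the $L^2$ (resp. $L^{2+4/d}$) norm with a tracked dependence on $R$. I expect this to be handled by covering $B_R$ with unit-scale cells, applying the Besov-space embedding cell-by-cell (where the truncation is harmless since diameters are $\le 1$), and then chaining the cells using a Poincar\'e-type inequality on the adjacency graph; the bounded-perturbation lemma for non-local Dirichlet forms converts the Lebesgue-measure statement into the $\mu_V$-statement at the cost of the ratio $K(R)/k(R)$, which is precisely why these quantities appear in \eqref{e13a}. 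Verifying the precise constant in \eqref{th3.1.1} — i.e.\ that the gain from $-c\phi$ strictly beats the loss from the non-negative part of $\mathscr{L}_{\alpha,V}\phi$ — is the one place where the computation must be done carefully rather than sketched, but it is a finite, explicit estimate once the form of $\phi$ from Lemma \ref{le3.3} is in hand.
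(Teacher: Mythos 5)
Your proposal is correct and follows essentially the same route as the paper: the Lyapunov estimate of Lemma \ref{le3.3} (applied to the truncated generator $\widetilde L_{\alpha,V}$ of the annular piece of the form, via Lemma \ref{le3.2}) controls $\int_{B(0,r)^c} f^2\,d\mu_V$ with the $x$-dependent rate encoded by $\Phi$, and the local (super) Poincar\'e inequalities of Propositions \ref{pr2.2} and \ref{pr2.1} handle the contribution from $B(0,r)$, with $\beta(s)$ arising by choosing $r=\Phi^{-1}(C_3 s^{-1})$. The cell-by-cell Besov embedding plus chaining you describe for the local step is precisely how Lemma \ref{le2.1} and Propositions \ref{pr2.1}--\ref{pr2.2} are proved, so the proposal matches the paper in both architecture and the key technical ingredients.
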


\medskip

Though the constant in the right hand side of \eqref{th3.1.1} is far
from optimal, the criteria in Theorem \ref{th3.1} are qualitatively
sharp, which can be seen from the following typical examples. For the proofs of examples, see Section \ref{section3.2}.
\begin{example}\label{Ex1}
\begin{itemize}
\item[(1)] Let
\begin{equation*}
\lambda_0:=2\log\Big[\frac{1}{\alpha}2^{2d+1}(e+e^{{1}/{2}})(2^{\alpha}-1)\Big].
\end{equation*}
 Then, for any probability measure
$\mu_{V_\lambda}(dx)=C_{\lambda} e^{-\lambda |x|}\,dx$ with
$\lambda>\lambda_0$, the Poincar\'{e} inequality (\ref{e12}) holds.

\item[(2)] For probability measure $\mu_{V_\delta}(dx)=C_{\delta} e^{-(1+|x|^\delta)}\,dx$ with $\delta>0$, the super Poincar\'{e} inequality (\ref{e13}) holds
if and only if $\delta>1$, and in this case, it holds with
\begin{equation}\label{ex1.2.1a}
\beta(s)=c_1\exp\Big(c_2\big(1+\log^{\frac{\delta}{(\delta-1)}}(1+{1}/{s})\big)\Big),\quad s>0
\end{equation}
for some positive constants $c_1$ and $c_2$, and equivalently, the Markov semigroup $P_t^{\alpha,V_\delta}$ associated with $\mathscr{E}_{\alpha,V_\delta}$ satisfies
\begin{equation*}
\|P_t^{\alpha,V_\delta}\|_{L^1(\mu_{V_\delta})\to L^\infty(\mu_{V_\delta})}\le \lambda_1\exp\Big(\lambda_2\big(1+\log^{\frac{\delta}{\delta-1}}(1+{1}/{t})\big)\Big),\quad t>0
\end{equation*}
 for some positive constants $\lambda_1$ and $\lambda_2$. Moreover,
\eqref{ex1.2.1a} is sharp in the sense that (\ref{e13}) does not
hold with any rate function $\beta(s)$ such that
\begin{equation}\label{ex1.2.1}
\lim_{s \rightarrow 0} \frac{\log
\beta(s)}{\text{log}^{\frac{\delta}{\delta-1}}(1+s^{-1}) }=0.
\end{equation}

\item[(3)] For probability measure $\mu_{V_\theta}(dx)=C_{\theta}
e^{-|x|\log^\theta(1+|x|)}\,dx$ with $\theta\in\R$, the super
Poincar\'{e} inequality (\ref{e13}) holds if and only if $\theta>0$,
and in this case, it holds with
\begin{equation}\label{ex1.2.2a}
\beta(s)=c_3\exp\Big(1+e^{c_4\log^{\frac{1}{\theta}}(1+{1}/{s})}\Big),\quad s>0
\end{equation}
for some positive constants $c_3$ and $c_4$; moreover,
\eqref{ex1.2.2a} is sharp in the sense that (\ref{e13}) does not
hold with any rate function $\beta(s)$ such that
\begin{equation}\label{ex1.2.2}
\lim_{s \rightarrow 0} \frac{\log\log
\beta(s)}{{\log}^{\frac{1}{\theta}}(1+s^{-1}) }=0.
\end{equation}
{In particular, the Markov semigroup $P_t^{\alpha,V_\theta}$ associated with
$\mathscr{E}_{\alpha,V_\theta}$ is ultracontractive if $\theta>1$, and in this case
\begin{equation*}
\|P_t^{\alpha,V_\theta}\|_{L^1(\mu_{V_\theta})\to L^\infty(\mu_{V_\theta})}\le \lambda_3\exp\Big(1+e^{\lambda_4\log^{\frac{1}{\theta}}(1+{1}/{t})}\Big),\quad t>0
\end{equation*}
holds with some positive constants $\lambda_3$ and $\lambda_4$.}
\end{itemize}
\end{example}

\begin{remark} Example \ref{Ex1} above shows that the property of the probability
measure $\mu_V$ fulfilling Poincar\'{e} inequality and super
Poincar\'{e} inequality for $ \mathscr{E}_{\alpha,V}(f,f)$ is
similar to that for local Dirichlet form
$D^{*}_{V}(f,f):=\frac{1}{2}\int |\nabla f(x)|^2\,\mu_V(dx)$, e.g.\
see \cite[Chapters 1 and 3]{WBook}. On the other hand, Example
\ref{Ex1} also implies that the probability measure $\mu_V$ is
easier to satisfy some functional inequalities for
$\mathscr{E}_{\alpha,V}(f,f)$ than those for $D^{*}_{V}(f,f)$. For
instance, given the probability measure
$\mu_{V_\delta}(dx)=C_{\delta} e^{-(1+|x|^\delta)}\,dx$ with
$\delta>0$, Example \ref{Ex1} (2) indicates that the measure
$\mu_{V_\delta}$ satisfies log-Sobolev inequality for
$\mathscr{E}_{\alpha,V_\delta}(f,f)$ if $\delta>1$; however,
$\mu_{V_\delta}$ satisfies log-Sobolev inequality for
$D^{*}_{V_\delta}(f,f)$ only if $\delta\ge2$, also see
\cite[Chapters 3 and 5]{WBook}.   \end{remark}
\subsection{Functional Inequalities for $\mathscr{D}_{\alpha,V}$}

\begin{theorem}\label{th3.2}
$(1)$ If
\begin{equation}\label{th3.2.1}
\liminf_{|x| \rightarrow \infty}\frac{e^{V(x)}}{|x|^{d+\alpha}}>0,
\end{equation}
then the following weighted Poincar\'{e} inequality
\begin{equation}\label{th3.2.2}
\begin{split}
\int f^2(x)\frac{e^{V(x)}}{1+|x|^{d+\alpha}}\,\mu_V(dx) \leqslant
C_1 \mathscr{D}_{\alpha,V}(f,f),\quad f \in C_b^{\infty}(\R^d),\
\mu_V(f)=0
\end{split}
\end{equation}
holds for some constant $C_1>0$. In particular, the following
Poincar\'{e} inequality
\begin{equation}\label{th3.2.2.1}
\begin{split}
\mu_V(f^2) \leqslant C_2 \mathscr{D}_{\alpha,V}(f,f),\quad f \in
C_b^{\infty}(\R^d),\ \mu_V(f)=0
\end{split}
\end{equation}
holds for some constant $C_2>0$.

$(2)$ For any locally bounded function $V$, the following super
Poincar\'{e} inequality
\begin{equation}\label{th3.2.3}
\mu_V(f^2)\le s\mathscr{D}_{\alpha,V}(f,f)+\beta(s)\mu_V(|f|)^2,\quad
s>0,\ f\in C_b^{\infty}(\R^d)
\end{equation}
does not hold for any rate function $\beta:(0,\infty) \rightarrow
(0,\infty)$.
\end{theorem}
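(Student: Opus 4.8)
\emph{Plan.} The two assertions are proved by unrelated arguments.

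\textbf{(1)} I would run the Lyapunov-function method together with the local Poincar\'e inequality, in the spirit of \cite{CW,WW}. The generator $L$ of $\mathscr{D}_{\alpha,V}$ acts on nice $\psi$ by
\[
L\psi(x)=\frac12\int_{\{|x-y|>1\}}\big(\psi(y)-\psi(x)\big)\,\frac{1+e^{V(x)-V(y)}}{|x-y|^{d+\alpha}}\,dy,
\]
the symmetrized jump kernel of $\mathscr{D}_{\alpha,V}$ being $\tfrac{C_V}{2}\big(e^{-V(x)}+e^{-V(y)}\big)|x-y|^{-d-\alpha}\mathbf 1_{\{|x-y|>1\}}$. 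Fix $\phi\in C^2(\R^d)$ with $\phi\ge1$ and $\phi(x)=|x|^\beta$ for $|x|\ge1$, with $\beta\in(0,\alpha/2)$ small (so that $\phi\in L^2(\mu_V)$ and $\mathscr{D}_{\alpha,V}(\phi,\phi)<\infty$). The first step is to show that \eqref{th3.2.1} forces the Lyapunov inequality
\[
-\frac{L\phi(x)}{\phi(x)}\ \ge\ c_1\,\frac{e^{V(x)}}{1+|x|^{d+\alpha}}\ -\ b\,\mathbf 1_{B(0,R_1)}(x),\qquad x\in\R^d,
\]
for some $c_1,b,R_1>0$. The mechanism (the analogue for $\mathscr{D}_{\alpha,V}$ of the computation for $D_{\rho,V}$ in \cite{CW}) is that the negative drift at infinity is produced by the jumps of a far-away $x$ into the unit ball: for $|y|\le1$ one has $1+e^{V(x)-V(y)}\ge e^{V(x)}\inf_{B(0,1)}e^{-V}$ and $1<|x-y|\le2|x|$, so the $\{|y|\le1\}$-part of $L\phi(x)$ is at most $-c\,e^{V(x)}|x|^{\beta-d-\alpha}$, which by \eqref{th3.2.1} is $\le-c'|x|^{\beta}$ for $|x|$ large; the competing positive contributions — the truncated fractional Laplacian $\int_{\{|z|>1\}}(|x+z|^\beta-|x|^\beta)|z|^{-d-\alpha}\,dz$ and the outward jumps $\{|y|>|x|\}$, where $e^{V(x)-V(y)}\le Ce^{V(x)}|y|^{-d-\alpha}$ again by \eqref{th3.2.1} — are $O(|x|^{\beta-\alpha})+O\big(\beta\,e^{V(x)}|x|^{\beta-d-\alpha}\big)$, hence dominated once $\beta$ is small relative to the $\liminf$ in \eqref{th3.2.1}.

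Next, from the Lyapunov inequality and the elementary bound $\int f^2\,(-L\phi/\phi)\,d\mu_V=\mathscr{D}_{\alpha,V}(\phi,f^2/\phi)\le\mathscr{D}_{\alpha,V}(f,f)$ (valid for positive $\phi$ in the domain, since $(a-b)(u^2/a-v^2/b)\le(u-v)^2$ pointwise for $a,b>0$) one obtains
\[
c_1\int f^2(x)\,\frac{e^{V(x)}}{1+|x|^{d+\alpha}}\,\mu_V(dx)\ \le\ \mathscr{D}_{\alpha,V}(f,f)+b\int_{B(0,R_1)}f^2\,d\mu_V.
\]
To dispose of the last term I would first prove the ordinary Poincar\'e inequality \eqref{th3.2.2.1} from the same Lyapunov inequality and the local Poincar\'e inequality of Proposition \ref{pr2.4}: were \eqref{th3.2.2.1} false, there would be $f_n$ with $\mu_V(f_n)=0$, $\mu_V(f_n^2)=1$, $\mathscr{D}_{\alpha,V}(f_n,f_n)\to0$; any weak $L^2(\mu_V)$-limit $g$ of a subsequence has $\mathscr{D}_{\alpha,V}(g,g)=0$, hence is constant (any two points are joined by a chain of jumps of length $>1$), hence $g=\mu_V(g)=0$, so $f_n\rightharpoonup0$; but the Lyapunov inequality gives $\int_{B(0,R_1)}f_n^2\,d\mu_V\ge c>0$ for $n$ large, while Proposition \ref{pr2.4} gives $\int_{B(0,R_1)}(f_n-\overline{f_n})^2\,d\mu_V\to0$ (with $\overline{f_n}$ the $\mu_V$-average over $B(0,R_1)$, which tends to $0$ by weak convergence), so $\int_{B(0,R_1)}f_n^2\,d\mu_V\to0$ — a contradiction. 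With \eqref{th3.2.2.1} in hand, $\int_{B(0,R_1)}f^2\,d\mu_V\le\mu_V(f^2)\le C_2\,\mathscr{D}_{\alpha,V}(f,f)$ for $\mu_V(f)=0$, and since $\int f^2(x)\,\frac{e^{V(x)}}{1+|x|^{d+\alpha}}\,\mu_V(dx)=C_V\int\frac{f(x)^2}{1+|x|^{d+\alpha}}\,dx$, the displayed bound becomes \eqref{th3.2.2}; conversely the weight $e^{V(x)}/(1+|x|^{d+\alpha})$ is bounded below by a positive constant under \eqref{th3.2.1} and local boundedness of $V$, so \eqref{th3.2.2} contains \eqref{th3.2.2.1}. (As the geometric origin of the weight, one also has the direct lower bound $\mathscr{D}_{\alpha,V}(f,f)\ge c\int_{\{|x|>2\}}\big(f(x)-\bar f_{B(0,1)}\big)^2(1+|x|)^{-d-\alpha}\,dx$, from restricting the symmetrized kernel to $y\in B(0,1)$, $|x|>2$ and using $\inf_{B(0,1)}e^{-V}>0$.)

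\textbf{(2)} Here the point is that $\mathscr{D}_{\alpha,V}$ has no small-jump part, so spike functions have bounded energy. Fix $0\le\psi\in C_c^\infty(B(0,1))$ with $\psi>0$ on $B(0,\tfrac12)$, and for $r\in(0,\tfrac12)$ set $\psi_r(x):=\psi(x/r)$ and $f_r:=\psi_r/\|\psi_r\|_{L^2(\mu_V)}$; then $f_r\in C_b^\infty(\R^d)$, $\mu_V(f_r^2)=1$, and $\mu_V(|f_r|)\le\mu_V(B(0,r))^{1/2}\to0$ as $r\to0$. Since $\operatorname{supp}f_r\subseteq B(0,r)$ has diameter $<1$, for any pair with $|x-y|>1$ at most one of $x,y$ lies in $B(0,r)$, so
\[
\mathscr{D}_{\alpha,V}(f_r,f_r)=\frac12\int_{B(0,r)}f_r(x)^2\Big(\int_{\{|x-y|>1\}}\frac{dy}{|x-y|^{d+\alpha}}\Big)\mu_V(dx)+\frac12\int_{\R^d}\Big(\int_{B(0,r)\cap\{|x-y|>1\}}\frac{f_r(y)^2}{|x-y|^{d+\alpha}}\,dy\Big)\mu_V(dx).
\]
The inner integral in the first term equals the constant $\kappa:=\int_{\{|z|>1\}}|z|^{-d-\alpha}\,dz<\infty$, so that term is $\tfrac{\kappa}{2}\mu_V(f_r^2)=\tfrac{\kappa}{2}$; in the second term $|x-y|^{-d-\alpha}\le1$ and $\mu_V(\R^d)=1$, so it is at most $\tfrac12\int_{B(0,r)}f_r^2\,dx\le\tfrac{1}{2C_V}\,e^{\,\sup_{B(0,1)}V}=:M$, uniformly in $r$ (using that $V$ is locally bounded). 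Hence $\mathscr{D}_{\alpha,V}(f_r,f_r)\le\tfrac{\kappa}{2}+M$ for all $r\in(0,\tfrac12)$. If \eqref{th3.2.3} held with some rate $\beta$, then for every $s>0$ and $r\in(0,\tfrac12)$,
\[
1=\mu_V(f_r^2)\le s\big(\tfrac{\kappa}{2}+M\big)+\beta(s)\,\mu_V(B(0,r));
\]
letting $r\to0$ gives $1\le s(\tfrac{\kappa}{2}+M)$, and then $s\to0$ gives $1\le0$, a contradiction, so no such $\beta$ exists.

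\textbf{Expected main obstacle.} Part (2) has essentially no obstacle once the spike construction is found; one only notes that the ``jump into the spike'' term is governed by the Lebesgue mass $\int_{B(0,r)}f_r^2\,dx$ on a fixed ball, which is why the spikes must stay in a fixed compact set — and is exactly why this argument fails for $\mathscr{E}_{\alpha,V}$. The real work is in Part (1): verifying the Lyapunov inequality with the sharp weight $e^{V}/(1+|x|^{d+\alpha})$, i.e.\ showing that the inward-jump drift dominates both the outward long jumps and the truncated fractional Laplacian — this is where \eqref{th3.2.1} enters and it fixes the admissible range of $\beta$ — and then passing cleanly to the Poincar\'e inequalities via Proposition \ref{pr2.4}.
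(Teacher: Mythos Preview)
Your proposal is correct. Part (2) is essentially the paper's argument: both show that $\mathscr{D}_{\alpha,V}(f,f)\le C\,\mu_V(f^2)$ for $f$ with small support (you via an explicit spike family $\psi_r$, the paper via a general bound on any compact $D$), then shrink the support so that Cauchy--Schwarz forces $\mu_V(|f|)^2\ll\mu_V(f^2)$, contradicting \eqref{th3.2.3}.

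For Part (1), you and the paper both run the same Lyapunov scheme with $\phi\sim|x|^{\beta}$ and the pointwise inequality $\int f^2(-L\phi/\phi)\,d\mu_V\le\mathscr{D}_{\alpha,V}(f,f)$, arriving at
\[
c_1\!\int f^2(x)\,\frac{e^{V(x)}}{1+|x|^{d+\alpha}}\,\mu_V(dx)\le \mathscr{D}_{\alpha,V}(f,f)+b\!\int_{B(0,R)}f^2\,d\mu_V.
\]
The difference is in how the ball term is absorbed. The paper uses the local Poincar\'e inequality (Proposition~\ref{pr2.4}) on a large ball $B(0,r)$, writes $\int_{B(0,r)}f\,d\mu_V=-\int_{B(0,r)^c}f\,d\mu_V$ for $\mu_V(f)=0$, and Cauchy--Schwarz brings back the weighted integral with a factor $\int_{B(0,r)^c}(1+|x|)^{d+\alpha}e^{-V(x)}\,\mu_V(dx)$, which is small for large $r$ by \eqref{th3.2.1}; one then absorbs into the left-hand side and gets \eqref{th3.2.2} (and hence \eqref{th3.2.2.1}) in a single pass. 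You instead first establish the \emph{unweighted} Poincar\'e inequality by a weak-compactness contradiction (lower semicontinuity of the closed form plus irreducibility to identify the weak limit as $0$), then feed it back to kill the ball term. Both closures are valid; the paper's is more direct and needs no compactness machinery, while yours isolates the ordinary Poincar\'e inequality as a clean intermediate step. One small point: the restriction $\beta<\alpha/2$ you impose to put $\phi\in L^2(\mu_V)$ is unnecessary --- the Lyapunov inequality only uses $\phi$ through $-L\phi/\phi$ tested against $f\in C_c^\infty$, and the paper simply takes $\alpha_0\in(0,1)$.
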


We present the following three remarks on Theorem \ref{th3.2}.

\begin{remark} (1) The condition \eqref{th3.2.1} is sharp for the
Poincar\'{e} inequality \eqref{th3.2.2.1}. For instance, let
$\mu_V(dx):=\mu_\varepsilon(dx)=C_\varepsilon (1+|x|)^{-d-\varepsilon}\,dx$
with $\varepsilon>0$. According to \cite[Corollary 1.2]{WW}, the
following Poincar\'{e} inequality
$$\mu_V(f^2) \leqslant C_3 {D}_{\alpha,V}(f,f): =\frac{C_3 }{2}\iint
\frac{(f(x)-f(y))^2}{|x-y|^{d+\alpha}}\,dy\, \mu_V(dx) $$ holds
for all $f \in C_b^{\infty}(\R^d)$ with $ \mu_V(f)=0$, if and only
if $\varepsilon\ge \alpha$. Note that $\mathscr{D}_{\alpha,V}
(f,f)\le{D}_{\alpha,V}(f,f)$, which along with \eqref{th3.2.1}
indicates that for the probability measure $\mu_\varepsilon$ above,
the Poincar\'{e} inequality \eqref{th3.2.2.1} holds if and only if
$\varepsilon\ge \alpha.$

(2) The weighted function in the weighted Poincar\'{e} inequality
\eqref{th3.2.2} is $$w(x)=\frac{e^{V(x)}}{1+|x|^{d+\alpha}},$$ which
is optimal in the sense that, the inequality \eqref{th3.2.2} fails
if we replace $\omega(x)$ above by a positive function
$\omega^*(x)$, which satisfies that
$$\liminf_{|x|\to\infty}\frac{\omega^*(x)}{\omega(x)}=\infty.$$ The
proof is based on \cite[Theorem 1.4]{CW}
and the fact that $\mathscr{D}_{\alpha,V}
(f,f)\le{D}_{\alpha,V}(f,f)$ for any $f\in C_b^\infty(\R^d)$.

(3) A more important point indicated in Theorem \ref{th3.2} is that
$\mathscr{D}_{\alpha, V}$ satisfies the weighted Poincar\'{e}
inequality \eqref{th3.2.2} (which is stronger than the
Poincar\'{e} inequality \eqref{th3.2.2.1}), but not the super
Poincar\'{e} inequality \eqref{th3.2.3}. The main reason for this
statement is due to the fact that the local super Poincar\'{e}
inequality does not hold for $\mathscr{D}_{\alpha, V}$, while the
local Poincar\'{e} inequality holds. That is, to derive the super Poincar\'{e} inequality for non-local Dirichlet form, we also need some
assumption for the density of small jump for the associated L\'{e}vy measure.
\end{remark}

\bigskip

The remaining part of this paper is organized as follows. In the
next section we present the local super Poincar\'{e} inequality for
$\mathscr{E}_{\alpha,V}$, and the local Poincar\'{e} inequality for
both $\mathscr{E}_{\alpha,V}$ and $\mathscr{D}_{\alpha,V}$, which
yields the weak Poincar\'{e} inequality for $\mathscr{E}_{\alpha,V}$
and $\mathscr{D}_{\alpha,V}$. Section \ref{sec3} is devoted to
functional inequalities for $\mathscr{E}_{\alpha,V}$. We first
derive a new Lyapunov type condition for $\mathscr{E}_{\alpha,V}$,
which along with the results in Section \ref{sec2} enables us to
prove Theorem \ref{th3.1} and also gives us the weighted Poincar\'{e}
inequality for $\mathscr{E}_{\alpha,V}$ (cf.\ Proposition \ref{pro-3.4}). Then, we study the
concentration of measure about the functional
inequalities for $\mathscr{E}_{\alpha,V}$, and present
the proof of Example \ref{Ex1}. To illustrate the differences between
$\mathscr{E}_{\alpha,V}$ and the non-local Dirichelt forms in
\cite{WW,CW}, we also compare these criteria here. In particular, we give a sharp and new example about the Poincar\'{e} inequality
and the log-Sobolev inequality for $D_{\alpha,\delta,V}$, which is defined in \eqref{dir1} by setting $\rho(r)=e^{-\delta r}r^{-(d+\alpha)}$ with $\delta\ge0$ and $\alpha\in (0,2)$. In the last
section, we give the proof of Theorem \ref{th3.2}.

\section{The local Poincar\'{e}-type Inequalities for
$\mathscr{E}_{\alpha,V}$ and $\mathscr{D}_{\alpha,V}$}\label{sec2}

Let $B(x,r)$ be the ball with center $x\in \R^d$ and radius $r>0$.
Let $V$ be a locally bounded measurable function on $\R^d$ such that
$e^{-V}\in L^1(dx)$ and $\mu_V(dx)=C_Ve^{-V(x)}\,dx$ is a
probability measure. For $r>0$, let $K(r)$ and $k(r)$ be the
functions defined by (\ref{e0}).

We begin with the following (classical) local super Poincar\'{e} inequality for Lebesgue measure, which has
been used in the proof of Proposition \ref{pr2.1} below.

\begin{lemma}\label{le2.1}
There exists a constant $C_1>0$ such that the following local
super Poincar\'{e} inequality holds on any ball $B(0,r)$ with
$r>1$:
\begin{equation*}
\begin{split}
 \int_{B(0,r)}& f^2(x) \,dx \\
 &\le  s \iint_{B(0,r+1)\times B(0,r+1)}
 \frac{(f(x)-f(y))^2}{|x-y|^{d+\alpha}}\I_{\{|x-y|\leqslant 1\}}\,dy\,dx\\
 &\quad+C_1  r^{d+{d^2}/{\alpha}}
 \big(1+s^{-d/\alpha}\big)\Big(\int_{B(0,r+1)}|f(x)|\,dx\Big)^2,\qquad s>0,\,\,f\in C_b^\infty(\R^d).
\end{split}
\end{equation*}
\end{lemma}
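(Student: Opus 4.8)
The plan is to reduce the claimed local super Poincaré inequality to a classical Nash-type / super Poincaré inequality for the fractional Sobolev (Besov) seminorm of order $\alpha/2$ on the whole space, and then control the truncation error coming from replacing the full double integral by its finite-range part $\I_{\{|x-y|\le 1\}}$ on the ball $B(0,r+1)$. First I would recall that, by the Sobolev embedding theory for Besov spaces on $\R^d$ (see \cite{BK}), the seminorm
$$[f]_{\alpha/2}^2:=\iint_{\R^d\times\R^d}\frac{(f(x)-f(y))^2}{|x-y|^{d+\alpha}}\,dy\,dx$$
satisfies, together with $\|f\|_{L^2}^2$, a Nash inequality
$$\|f\|_{L^2}^{2+2\alpha/d}\le C\big([f]_{\alpha/2}^2+\|f\|_{L^2}^2\big)\,\|f\|_{L^1}^{2\alpha/d},$$
which by the standard equivalence between Nash inequalities and super Poincaré inequalities (cf.\ \cite{WBook}) yields, for all $s>0$,
$$\|f\|_{L^2}^2\le s\,[f]_{\alpha/2}^2+C\,(1+s^{-d/\alpha})\,\|f\|_{L^1}^2 .$$
Applying this to $f\I_{B(0,r+1)}$ and using $\|f\I_{B(0,r+1)}\|_{L^1}\le\|f\|_{L^1(B(0,r+1))}$ gives the $L^2(B(0,r))$ norm bounded by $s[f\I_{B(0,r+1)}]_{\alpha/2}^2$ plus the desired $L^1$ term — but with the \emph{full} Besov seminorm of the truncated function rather than the finite-range integral over $B(0,r+1)^2$.

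The second step is to estimate this discrepancy. Splitting $[f\I_{B(0,r+1)}]_{\alpha/2}^2$ according to $\{|x-y|\le1\}$ and $\{|x-y|>1\}$: on $\{|x-y|\le 1\}$, since both $x,y$ lie in $B(0,r+1)$ whenever $f\I_{B(0,r+1)}(x)$ or $f\I_{B(0,r+1)}(y)$ is nonzero, the corresponding piece is at most the finite-range integral in the statement; on $\{|x-y|>1\}$ we bound $(f\I_{B(0,r+1)}(x)-f\I_{B(0,r+1)}(y))^2\le 2(f\I_{B(0,r+1)}(x))^2+2(f\I_{B(0,r+1)}(y))^2$ and integrate $|x-y|^{-d-\alpha}$ over $\{|x-y|>1\}$, which is a finite constant depending only on $d$ and $\alpha$. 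This produces a term $\le c_{d,\alpha}\,\|f\|_{L^2(B(0,r+1))}^2$. Hence
$$[f\I_{B(0,r+1)}]_{\alpha/2}^2\le \iint_{B(0,r+1)^2}\frac{(f(x)-f(y))^2}{|x-y|^{d+\alpha}}\I_{\{|x-y|\le1\}}\,dy\,dx + c_{d,\alpha}\,\|f\|_{L^2(B(0,r+1))}^2 .$$
The last step is to absorb the extra $\|f\|_{L^2(B(0,r+1))}^2$: apply the super Poincaré inequality on the slightly larger ball $B(0,r+1)$ with a small parameter, say with $s$ replaced by $\min\{s,(2s\,c_{d,\alpha})^{-1}\}$ or simply iterate once, to bound $c_{d,\alpha}s\|f\|_{L^2(B(0,r+1))}^2$ by $\tfrac12$ of the finite-range integral on $B(0,r+2)^2$ plus an $L^1$ term; a covering/rescaling argument keeping track of the ball radius then gives the stated power $r^{d+d^2/\alpha}$ in front of the $(1+s^{-d/\alpha})$ factor.

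The main obstacle I expect is twofold: first, getting the \emph{explicit} $r$-dependence $r^{d+d^2/\alpha}$ right — this comes from scaling: if $f_r(x):=f(rx)$ then the Besov seminorm scales like $r^{d-\alpha}$, the $L^2$ norm like $r^d$, and the $L^1$ norm like $r^{2d}$, so transporting the unit-ball super Poincaré inequality to $B(0,r)$ and re-expressing $s$ in the original scale forces the factor $r^{d+d^2/\alpha}$ (and also the $s^{-d/\alpha}$ shape), and one must make sure the truncation $\I_{\{|x-y|\le1\}}$ is compatible with this rescaling, which it is not exactly — so in fact it is cleaner \emph{not} to rescale but to use a fixed Nash constant on $B(0,r+1)$ directly, at the cost of tracking the constant in the Besov–Nash inequality on a ball of radius $r+1$, whose dependence on the radius is again polynomial of the stated order by a standard covering of $B(0,r+1)$ by $\sim r^d$ unit balls. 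Second, the step of passing from the truncated-seminorm control on $B(0,r+1)^2$ to the one on $B(0,r)$ (as in the statement, the energy term lives on $B(0,r+1)^2$ while the $L^2$ term lives on $B(0,r)$) is exactly what makes the off-diagonal error harmless, so the bookkeeping of nested balls $B(0,r)\subset B(0,r+1)\subset B(0,r+2)$ must be done carefully; everything else is routine.
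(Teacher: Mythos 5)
Your approach of truncating $f$ to $f\,\I_{B(0,r+1)}$ and applying a global Nash/super Poincar\'e inequality on $\R^d$ has a genuine gap in the first step of the splitting. You assert that the part of $[f\I_{B(0,r+1)}]_{\alpha/2}^2$ over $\{|x-y|\le 1\}$ ``is at most the finite-range integral in the statement'' because both $x,y$ lie in $B(0,r+1)$ whenever $f\I_{B(0,r+1)}(x)$ or $f\I_{B(0,r+1)}(y)$ is nonzero. This is not correct: for $x\in B(0,r+1)$, $y\notin B(0,r+1)$ with $|x-y|\le 1$, the integrand is $f(x)^2/|x-y|^{d+\alpha}$, and these pairs $(x,y)$ do \emph{not} lie in $B(0,r+1)\times B(0,r+1)$. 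This boundary contribution is
\begin{equation*}
\int_{B(0,r+1)}f(x)^2\Bigl(\int_{\{|x-y|\le 1,\,y\notin B(0,r+1)\}}\frac{dy}{|x-y|^{d+\alpha}}\Bigr)\,dx,
\end{equation*}
whose inner integral behaves like $\mathrm{dist}\bigl(x,\partial B(0,r+1)\bigr)^{-\alpha}$ as $x$ approaches the boundary. Hence for $\alpha\ge 1$ the Besov seminorm $[f\I_{B(0,r+1)}]_{\alpha/2}^2$ is generically infinite (sharp truncation is simply not admissible in $H^{\alpha/2}$ for $\alpha\ge 1$), and for $\alpha<1$ it is finite but dominated neither by the finite-range energy on $B(0,r+1)^2$ nor by $\|f\|_{L^2(B(0,r+1))}^2$ with a constant independent of $f$. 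The subsequent absorption step is also problematic in its current form: the extra $L^2$ term lives on $B(0,r+1)$ while the left-hand side lives on $B(0,r)$, so one cannot subtract it directly, and iterating on a larger ball $B(0,r+2)$ does not terminate in the form required by the statement, where the energy term is exactly on $B(0,r+1)\times B(0,r+1)$.

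The paper avoids all of this by never truncating $f$. It first establishes the Nash/super Poincar\'e inequality on small balls $B(z,1/2)$ using the Sobolev embedding from \cite{BK} and the equivalence in \cite[Corollary 3.3.4]{WBook}; on such a ball $|x-y|\le 1$ holds automatically, so the indicator $\I_{\{|x-y|\le 1\}}$ can be inserted for free and no boundary term ever appears. It then covers $B(0,r)$ by $\sim r^d$ balls $B(z_i,1/2)\subset B(0,r+1)$, sums the local inequalities, and finally rescales the parameter ($s\mapsto s/(c_4 r^d)$), which is precisely what produces the exponent $r^{d+d^2/\alpha}$ in front of $(1+s^{-d/\alpha})$. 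If you want to salvage your route, the clean fix is exactly this: replace the global truncation by a covering by balls of radius $1/2$ so that the constraint $|x-y|\le 1$ is built in, rather than trying to recover it a posteriori from the full-space seminorm.
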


\begin{proof} For $z\in\R^d$ and $p\ge1$, let $L^p(B(z,1/2),dx)$ be the $L^p$ space with respect to Lebesgue measure for Borel measurable functions defined on
the set $B(z,1/2)$. According to \cite[(2.3)]{BK}, for any
$\alpha\in(0,d \wedge 2)$, there is a constant $c_1>0$ such that for all $z
\in \R^d$ and $f\in C_b^{\infty}(\R^d)$,
\begin{equation*}
\begin{split}
&\|f\|^2_{L^{2d/(d-\alpha)}(B(z,1/2),dx)}\\
&\quad\leqslant c_1 \bigg(\iint_{B(z,1/2)\times B(z,1/2)}
 \frac{(f(x)-f(y))^2}{|x-y|^{d+\alpha}}\,dy\,dx+\|f\|^2_{L^{2}(B(z,1/2),dx)}\bigg).
\end{split}
\end{equation*}
Then, by \cite[Corollary 3.3.4 (2)]{WBook}, also see
\cite[Theorem 4.5 (2)]{W2}, for any $\alpha\in(0,d\wedge 2)$, there
is a constant $c_2>0$ such that for each $z\in \R^d$ and $f\in
C_b^{\infty}(\R^d)$,
\begin{equation}\label{e2}
\begin{split}
\int_{B(z,1/2)} f^2(x) \,dx \leqslant & s \iint_{B(z,1/2)\times
B(z,1/2)}
 \frac{(f(x)-f(y))^2}{|x-y|^{d+\alpha}}\,dy\,dx\\
 &+c_2
 \big(1+s^{-d/\alpha}\big)\Big(\int_{B(z,1/2)}|f(x)|\,dx\Big)^2,\quad s>0.\
\end{split}
\end{equation}

On the other hand, according to \cite[Propositions 3.1 and 3.3]{BK},
for any $\alpha\in[d,2)$ (if $d<2$), there is a constant $c_3>0$ such that for
all $z \in \R^d$ and $f\in C_b^{\infty}(\R^d)$,
\begin{equation*}
\begin{split}
&\|f\|^{2(1+\alpha/d)}_{L^{2}(B(z,1/2),dx)}\\
&\leqslant c_3 \bigg(\iint_{B(z,1/2)\times B(z,1/2)}
 \frac{(f(x)-f(y))^2}{|x-y|^{d+\alpha}}\,dy\,dx+\|f\|^2_{L^{2}(B(z,1/2),dx)}\bigg)\|f\|^{2\alpha/d}_{L^1(B(z,1/2),dx)}.
\end{split}
\end{equation*} By \cite[Corollary 3.3.4 (2)]{WBook} again,
we know that the inequality \eqref{e2} also holds for
$\alpha\in[d,2)$ (possibly with a different constant $c_2>0$). In particular, the constants
$c_1,c_2,c_3$ above do not depend on $z\in \R^d$.

For any $r>1$, we can find a finite set $\Pi_r:=\{z_i\}\subseteq
B(0,r)$ such that
\begin{equation}\label{e3}
B(0,r)\subseteq \bigcup_{z_i\in \Pi_r}B(z_i,1/2),\quad \sharp \,\Pi_r
\leqslant c_4 r^d,
\end{equation}
where $\sharp\,\Pi_r$ denotes the number of the element in the set
$\Pi_r$, and $c_4>0$ is a constant independent of $r$. Therefore, by
(\ref{e2}) (note that according to the argument above it holds for
all $\alpha\in(0,2)$) and (\ref{e3}), we get for each $r>1$ and
$f\in C^\infty_b(\R^d)$.
\begin{align*}
 \int_{B(0,r)} f^2(x)\, dx &\leqslant \sum_{z_i \in \Pi_r}\int_{B(z_i,1/2)}f^2(x)\, dx\\
& \leqslant  \sum_{z_i \in \Pi_r}\bigg[ s \iint_{B(z_i,1/2)\times
B(z_i,1/2)}
 \frac{(f(x)-f(y))^2}{|x-y|^{d+\alpha}}\,dy\,dx\\
&\qquad\qquad+c_2\big(1+s^{-d/\alpha}\big)\Big(\int_{B(z_i,1/2)}|f(x)|\,dx\Big)^2\bigg]\\
&=  \sum_{z_i \in \Pi_r}\bigg[ s \iint_{B(z_i,1/2)\times B(z_i,1/2)}
 \frac{(f(x)-f(y))^2}{|x-y|^{d+\alpha}}\I_{\{|x-y|\leqslant 1\}}\,dy\,dx\\
&\qquad\qquad+c_2\big(1+s^{-d/\alpha}\big)\Big(\int_{B(z_i,1/2)}|f(x)|\,dx\Big)^2\bigg]\\
&\leqslant c_4r^d s \iint_{B(0,r+1)\times B(0,r+1)}
 \frac{(f(x)-f(y))^2}{|x-y|^{d+\alpha}}\I_{\{|x-y|\leqslant 1\}}\,dy\,dx\\
 &\quad+
 c_2c_4r^d\big(1+s^{-d/\alpha}\big)\Big(\int_{B(0,r+1)}|f(x)|\,dx\Big)^2,
\end{align*}
where in the equality above we have used the fact that for every
$x,y \in B(z,1/2)$ and $z\in\R^d$, $|x-y|\leqslant 1$; and the last
inequality follows from the fact that $B(z,1/2)\subseteq B(0,r+1)$
for each $z\in \Pi_r\subset B(0,r)$ and $\sharp \,\Pi_r \leqslant
c_4 r^d$.

The required assertion follows by replacing $c_4r^d s$
with $s$ in the inequality above.
\end{proof}
Now, we turn to the local super Poincar\'{e} inequality for
$\mathscr{E}_{\alpha,V}$.
\begin{proposition}\label{pr2.1}
There is a constant $C_2>0$ such that for each $r>1$, $s>0$ and $f\in
C_b^\infty(\R^d)$,
\begin{equation}\label{pr2.1.1}
\begin{split}
 \int_{B(0,r)} f^2(x) \,\mu_V(dx) \leqslant & s \, \mathscr{E}_{\alpha,V}(f,f)+\beta_r(s)\Big(\int_{B(0,r+1)}|f(x)|\,\mu_V(dx)\Big)^2,
\end{split}
\end{equation}
where $$\beta_r(s)=C_2   \frac{r^{d+{d^2}/{\alpha}}
K(r)^{1+{d}/{\alpha}}}
 {k(r)^{2+{d}/{\alpha}}}\big(1+s^{-d/\alpha}\big).$$
\end{proposition}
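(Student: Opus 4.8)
The plan is to derive the local super Poincar\'{e} inequality for $\mathscr{E}_{\alpha,V}$ from the corresponding inequality for Lebesgue measure (Lemma \ref{le2.1}) by sandwiching the density $e^{-V}$ between its infimum and supremum on the relevant balls. First I would observe that on $B(0,r+1)$ one has $k(r)/C_V^{-1} \le$ wait --- more precisely, since $\mu_V(dx) = C_V e^{-V(x)}\,dx$, on the ball $B(0,r)$ we have $C_V e^{-V(x)} \ge C_V k(r)$ (using that $k(r) = \inf_{|x|\le r+1} e^{-V(x)} \le \inf_{|x|\le r} e^{-V(x)}$), and on $B(0,r+1)$ we have $C_V e^{-V(x)} \le C_V K(r+1)$. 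Hence
\begin{equation*}
\int_{B(0,r)} f^2\,\mu_V(dx) \le C_V K(r) \int_{B(0,r)} f^2(x)\,dx
\end{equation*}
after noting $K(r) = \sup_{|x|\le r} e^{-V(x)}$ --- actually the cleanest bound uses $K$ evaluated at $r$ on the left and $k$ at $r$ to pass back; I will keep track of whether one needs $K(r)$ or $K(r+1)$ and will simply use $K(r)\le K(r+1)$ or enlarge constants as needed so that everything is expressed via $k(r)$ and $K(r)$.

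Next I would plug $f$ into the Lebesgue-measure inequality of Lemma \ref{le2.1}. The left side becomes controlled as above. For the energy term on the right of Lemma \ref{le2.1}, I must convert
\begin{equation*}
\iint_{B(0,r+1)\times B(0,r+1)} \frac{(f(x)-f(y))^2}{|x-y|^{d+\alpha}}\I_{\{|x-y|\le 1\}}\,dy\,dx
\end{equation*}
into $\mathscr{E}_{\alpha,V}(f,f)$. Here the point is that on the region $\{|x-y|\le 1\}$ with $x \in B(0,r+1)$, one also has $y \in B(0,r+2)$, and $e^{-V(x)} \ge k(r+1) \ge$ something comparable to $k(r)$ --- again I will absorb the shift. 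Thus the Lebesgue double integral is bounded by $(C_V k(r))^{-1}$ times $\iint_{\{|x-y|\le 1\}} \frac{(f(x)-f(y))^2}{|x-y|^{d+\alpha}} \mu_V(dx)\,dy$ restricted to $x \in B(0,r+1)$, which is at most $2 (C_V k(r))^{-1} \mathscr{E}_{\alpha,V}(f,f)$. For the $L^1$ term, I bound $\int_{B(0,r+1)} |f|\,dx \le (C_V k(r))^{-1} \int_{B(0,r+1)} |f|\,\mu_V(dx)$, so its square picks up $(C_V k(r))^{-2}$.

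Assembling: starting from $\int_{B(0,r)} f^2\,\mu_V \le C_V K(r) \big[ s' \cdot (\text{Lebesgue energy}) + C_1 r^{d+d^2/\alpha}(1+(s')^{-d/\alpha})(\text{Lebesgue }L^1)^2\big]$, substituting the conversions gives an energy coefficient of order $C_V K(r) s' (C_V k(r))^{-1}$ and an $L^1$-coefficient of order $C_V K(r) \cdot r^{d+d^2/\alpha}(1+(s')^{-d/\alpha}) (C_V k(r))^{-2}$. Setting $s := c\, s'\, K(r)/k(r)$ so that the energy coefficient equals $s$, we get $s' = c^{-1} s\, k(r)/K(r)$, hence $(s')^{-d/\alpha}$ contributes a factor $(K(r)/k(r))^{d/\alpha}$; collecting powers, the $L^1$-coefficient becomes of the order
\begin{equation*}
r^{d+d^2/\alpha}\, \frac{K(r)^{1+d/\alpha}}{k(r)^{2+d/\alpha}}\,(1+s^{-d/\alpha}),
\end{equation*}
which is exactly the stated $\beta_r(s)$ up to the universal constant $C_2$ (the $C_V$ factors and the various $c$'s combine into $C_2$, noting $1 \le 1+(s')^{-d/\alpha}$ and enlarging constants freely). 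The main technical nuisance --- not a genuine obstacle --- will be the bookkeeping of the radius shifts ($r$ vs.\ $r+1$ vs.\ $r+2$) so that the final bound is expressed cleanly in terms of $k(r)$ and $K(r)$ as defined in \eqref{e0}; here I would use monotonicity ($k$ nonincreasing is false in general, but $k(r)$ as defined already includes the $+1$ in its infimum, which is precisely why the domain $B(0,r+1)$ on the right of \eqref{pr2.1.1} causes no trouble) and the fact that $K$ is nondecreasing to keep everything under control, absorbing any residual mismatch into $C_2$.
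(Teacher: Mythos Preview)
Your proposal is correct and follows essentially the same approach as the paper: bound the density on $B(0,r)$ by $K(r)$, apply Lemma~\ref{le2.1}, convert the Lebesgue energy and $L^1$ terms back to $\mu_V$ using the lower bound $k(r)$ on $B(0,r+1)$, and then rescale $s\mapsto s\,k(r)/K(r)$.

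One remark: most of your hedging about radius shifts ($r$ vs.\ $r{+}1$ vs.\ $r{+}2$) is unnecessary. The double integral in Lemma~\ref{le2.1} is over $B(0,r+1)\times B(0,r+1)$, so both $x$ and $y$ already lie in $B(0,r+1)$; you only need to convert the $dx$ integration to $\mu_V(dx)$ (the $dy$ stays Lebesgue), and for that the definition $k(r)=\inf_{|x|\le r+1}e^{-V(x)}$ gives exactly $e^{-V(x)}\ge k(r)$ on $B(0,r+1)$ with no further adjustment. The paper exploits this directly and never needs $r+2$ or monotonicity arguments.
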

\begin{proof}
For any $r>1$, by Lemma \ref{le2.1}, we find that for each
$f\in C_b^\infty(\R^d)$ and $s>0$,
\begin{align*}
\int_{B(0,r)} f^2(x) \,\mu_V(dx) &= C_V\int_{B(0,r)}
f^2(x)e^{-V(x)}\,dx\\
&\leqslant C_VK(r)\int_{B(0,r)}
f^2(x)\,dx\\
&\leqslant C_V K(r) \bigg[s
\iint_{B(0,r+1)\times B(0,r+1)}\frac{(f(x)-f(y))^2}{|x-y|^{d+\alpha}}\I_{\{|x-y|\leqslant 1\}}\,dy\,dx\\
&\qquad \qquad\qquad+C_1  r^{d+{d^2}/{\alpha}} \big(1+s^{-d/\alpha}\big)\Big(\int_{B(0,r+1)}|f(x)|\,dx\Big)^2\bigg]\\
&\leqslant \frac{sK(r)}{k(r)} \iint_{B(0,r+1)\times
B(0,r+1)}\frac{(f(x)-f(y))^2}
{|x-y|^{d+\alpha}}\I_{\{|x-y|\leqslant 1\}}\,dy\,\mu_V(dx)\\
&\qquad+
\frac{C_1r^{d+{d^2}/{\alpha}}K(r)}{C_Vk^2(r)}\big(1+s^{-d/\alpha}\big)\Big(\int_{B(0,r+1)}|f(x)|\,\mu_V(dx)\Big)^2,
\end{align*} where $C_1$ is a positive constant independent of
$r$.

Replacing $s$ with ${s k(r)}/{K(r)}$ in the inequality above and according to the definition of $\beta_r(s)$, we arrive at
\begin{equation*}
\begin{split}
 \int_{B(0,r)} f^2(x) \,\mu_V(dx) \leqslant & s \iint_{B(0,r+1)\times B(0,r+1)}
 \frac{(f(x)-f(y))^2}{|x-y|^{d+\alpha}}\I_{\{|x-y|\leqslant 1\}}\,dy\,\mu_V(dx)\\
 &+\beta_r(s)\Big(\int_{B(0,r+1)}|f(x)|\,\mu_V(dx)\Big)^2,\quad
 s>0,
\end{split}
\end{equation*}
which implies the required assertion.\end{proof}

Next, we will present the local Poincar\'{e} inequality for
$\mathscr{E}_{\alpha,V}$, which is inspired by the proofs of
\cite[Theorem 5.1]{CKK} and \cite[Theorem 2.2]{E}, see also
\cite[Section 1]{HKS}.

\begin{proposition}\label{pr2.2} There is a constant $C_3>0$ such
that for each $r>1$ and $f\in C_b^\infty(\R^d)$,
\begin{equation}\label{pr2.2.1}
\begin{split}
\int_{B(0,r)}& \bigg(f(x)-\frac{\int_{B(0,r)} f(x)\,\mu_V(dx)}{\mu_V(B(0,r))}\bigg)^2\,\mu_V(dx)\\
&\leqslant \frac{C_{3}K(r)r^{3d}}{k(r)}\iint_{B(0,r+1)\times
B(0,r+1)} \frac{(f(x)-f(y))^2}{|x-y|^{d+\alpha}}\I_{\{|x-y|\leqslant
1\}}\,dy\,\mu_V(dx)\\
&\leqslant \frac{C_{3}K(r)r^{3d}}{k(r)}\mathscr{E}_{\alpha,V}(f,f).
\end{split}
\end{equation}
\end{proposition}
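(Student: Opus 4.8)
The plan is to establish the local Poincaré inequality on $B(0,r)$ by a chaining-type argument that reduces the estimate on $B(0,r)$ to estimates on small balls of radius $\tfrac12$, on each of which the classical (local) Poincaré inequality for the fractional-type Dirichlet form on a ball is available. First I would fix a covering $\Pi_r = \{z_i\} \subseteq B(0,r)$ of $B(0,r)$ by balls $B(z_i, 1/2)$ with $\sharp\, \Pi_r \le c\, r^d$, exactly as in \eqref{e3}; I would also arrange that the covering has bounded overlap and that the "union graph" on $\Pi_r$ (joining $z_i \sim z_j$ when $B(z_i,1/2) \cap B(z_j,1/2) \neq \emptyset$) is connected with graph-diameter $O(r)$. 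On each ball $B(z_i, 1/2)$, the Besov–Sobolev embedding cited from \cite{BK} (together with \cite[Corollary 3.3.4]{WBook}) gives, after dropping the super-Poincaré term, the plain local Poincaré inequality
\begin{equation*}
\int_{B(z_i,1/2)} \Big(f(x) - \overline{f}_{B(z_i,1/2)}\Big)^2\,dx \le c_0 \iint_{B(z_i,1/2)\times B(z_i,1/2)} \frac{(f(x)-f(y))^2}{|x-y|^{d+\alpha}}\,dy\,dx,
\end{equation*}
where $\overline{f}_{B(z_i,1/2)}$ denotes the Lebesgue average of $f$ on $B(z_i,1/2)$, and $c_0$ is independent of $z_i$ (and of $r$). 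Since all pairs $x,y \in B(z_i,1/2)$ satisfy $|x-y|\le 1$, the right-hand side here is exactly the finite-range form restricted to $B(z_i,1/2)$.

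Next I would pass from Lebesgue measure to $\mu_V$ on each small ball: since $e^{-V}$ is locally bounded and bounded below on $B(0,r+1)$, we have $k(r) \le e^{-V} \le K(r)$ there, so up to the factor $K(r)/k(r)$ the displayed inequality holds with $dx$ replaced by $\mu_V(dx)$ on both sides. Then comes the chaining step, which I expect to be the main obstacle: one must control the global oscillation $\big(f - \overline{f}_{B(0,r), \mu_V}\big)$ on $B(0,r)$ by the sum of the local oscillations. Writing $a_i := \overline{f}_{B(z_i,1/2),\mu_V}$, the standard device is
\begin{equation*}
\int_{B(0,r)} \Big(f - \overline{f}_{B(0,r),\mu_V}\Big)^2\,\mu_V(dx) \le 2 \sum_i \int_{B(z_i,1/2)} \big(f - a_i\big)^2\,\mu_V(dx) + 2\sum_i \mu_V(B(z_i,1/2))\, \big(a_i - a_{*}\big)^2,
\end{equation*}
where $a_*$ is (say) the average over a fixed reference ball; the first sum is handled termwise by the local Poincaré inequality above. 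For the second sum I would bound each $|a_i - a_*|$ by a sum of consecutive differences $|a_j - a_{j'}|$ along a path in the union graph from $z_i$ to the reference ball, of length $O(r)$; each consecutive difference $|a_j - a_{j'}|$ with $B(z_j,1/2)\cap B(z_{j'},1/2)\neq\emptyset$ is controlled, via Cauchy–Schwarz on the (comparable) measures of the overlap, by $\big(\mu_V\text{-average of }(f-a_j)^2 + (f-a_{j'})^2\big)$ over the overlap, hence again by the corresponding local energies. Collecting terms, the Cauchy–Schwarz loss from summing $O(r)$ differences contributes an $r^2$, the number of balls contributes $r^d$, and the bounded-overlap count of how many paths reuse a given edge contributes another factor polynomial in $r$; being slightly wasteful, all of this is absorbed into the stated $r^{3d}$.

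Finally, I would sum the localized energies back up: by bounded overlap, $\sum_i \iint_{B(z_i,1/2)^2} \frac{(f(x)-f(y))^2}{|x-y|^{d+\alpha}}\I_{\{|x-y|\le 1\}}\,dy\,\mu_V(dx) \le c\iint_{B(0,r+1)\times B(0,r+1)} \frac{(f(x)-f(y))^2}{|x-y|^{d+\alpha}}\I_{\{|x-y|\le1\}}\,dy\,\mu_V(dx)$, since each $B(z_i,1/2) \subseteq B(0,r+1)$. This yields the first inequality in \eqref{pr2.2.1} with a constant of the form $C_3 K(r) r^{3d}/k(r)$; the second inequality is immediate because the double integral over $B(0,r+1)^2$ of the finite-range kernel is dominated by the full integral defining $\mathscr{E}_{\alpha,V}(f,f)$. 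The one genuinely delicate point is getting the geometry of the chain right — ensuring the union graph is connected with controlled diameter and bounded edge-multiplicity, so that the polynomial-in-$r$ loss is honest; the references to \cite[Theorem 5.1]{CKK} and \cite[Theorem 2.2]{E} suggest this is exactly the mechanism the authors have in mind, and I would follow that template.
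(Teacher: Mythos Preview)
Your chaining strategy is the right idea and is indeed the mechanism the paper uses, but the execution differs in two ways worth flagging.

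First, the paper does not invoke the Besov--Sobolev embedding from \cite{BK} for the local Poincar\'e on small balls; it uses a one-line Cauchy--Schwarz instead. On $B=B(z,1/6)$,
\[
\int_B \big(f - f_B\big)^2\,dx \le c\int_B\Big(\int_B \frac{(f(x)-f(y))^2}{|x-y|^{d+\alpha}}\,dy\Big)\Big(\int_B |x-y|^{d+\alpha}\,dy\Big)dx,
\]
and the same device controls the neighbor gap $(f_{B(z_1,1/6)}-f_{B(z_2,1/6)})^2$ directly by the energy on $B(z_1,1/2)\times B(z_1,1/2)$ whenever the two small balls meet. This is the reason for the radius $1/6$ (not $1/2$): it forces $B(z_1,1/6)\cup B(z_2,1/6)\subset B(z_1,1/2)$, so that $|x-y|\le 1$ throughout the double integral and the finite-range indicator survives the Cauchy--Schwarz step. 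Your overlap-average route bypasses this geometric constraint, but replaces two elementary inequalities by heavier machinery.

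Second, and more consequential for the stated constant: the paper runs the entire chain in \emph{Lebesgue} measure, comparing every cover-ball to the fixed reference average $f_{B(0,1/6)}$, and converts to $\mu_V$ only once at the end --- one factor $K(r)$ to pass from $\mu_V$ to $dx$ on the left, one factor $1/k(r)$ to pass back on the right. Your plan converts early and then chains with $\mu_V$-averages; the overlap estimate then needs $\mu_V(U)\ge C_V k(r)|U|$, contributing an extra $1/k(r)$, and the weights $\mu_V(B(z_i,1/2))$ in your second sum contribute an extra $K(r)$. As written the argument yields $K(r)^2/k(r)^2$ rather than $K(r)/k(r)$. This does not affect the qualitative applications (Theorem~\ref{th3.1}(1) only uses Proposition~\ref{pr2.2} at a fixed radius), but it does not match the statement; the clean fix is to chain with Lebesgue averages and convert once at the end, as the paper does.
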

\begin{proof}
Let $m(A):=\int_A dx$ be the volume of a Borel set $A\subseteq \R^d$
with respect to Lebesgue measure. For any Borel set $A$ with
$m(A)>0$ and $f\in C_b^{\infty}(\R^d)$, set
\begin{equation*}
f_{A}:=\frac{1}{m(A)}\int_A f(x)\,dx.
\end{equation*}
First, there are two positive constants $c_1$, $c_2$ such that for
any $z\in \R^d$,
\begin{equation}\label{e5}
\begin{split}
&\int_{B(z,1/6)}\big(f(x)-f_{B(z,1/6)}\big)^2\,dx\\
&=\frac{1}{(m(B(z,1/6)))^2}\int_{B(z,1/6)}\Big(\int_{B(z,1/6)}(f(x)-f(y))\,dy\Big)^2\,dx\\
&\leqslant c_1 \int_{B(z,1/6)}\Big(\int_{B(z,1/6)}\frac{(f(x)-f(y))^2}
{|x-y|^{d+\alpha}}\,dy\Big)\Big(\int_{B(z,1/6)}|x-y|^{d+\alpha}\,dy\Big)\,dx\\
&\leqslant c_2 \iint_{B(z,1/6)\times B(z,1/6)}\frac{(f(x)-f(y))^2}
{|x-y|^{d+\alpha}}\I_{\{|x-y|\leqslant 1\}}\,dy\,dx,
\end{split}
\end{equation}
where the first inequality follows from the Cauchy-Schwartz
inequality, and in the second inequality we have used the fact that
$|x-y|\leqslant 1$ for every $x$, $y \in B(z,1/6)$ and $z\in\R^d$.

Second, for any $z_1$, $z_2 \in \R^d$ with $B(z_1,1/6)\bigcap
B(z_2,1/6) \neq \emptyset$, there are two constants $c_3$, $c_4>0$
independent of $z_1$, $z_2\in\R^d$ such that
\begin{equation}\label{e6}
\begin{split}
& (f_{B(z_1,1/6)}-f_{B(z_2,1/6)})^2\\
&=\Big(
\frac{1}{m(B(z_1,1/6))m(B(z_2,1/6))}\int_{B(z_1,1/6)}\int_{B(z_2,1/6)}(f(x)-f(y))\,dy\,dx\Big)^2\\
&\leqslant
c_3\int_{B(z_1,1/6)}\Big(\int_{B(z_2,1/6)}\frac{(f(x)-f(y))^2}{|x-y|^{d+\alpha}}\,dy\Big)
\Big(\int_{B(z_2,1/6)}|x-y|^{d+\alpha}\,dy\Big)\,dx\\
&\leqslant c_4\iint_{B(z_1,1/2)\times B(z_1,1/2)}
\frac{(f(x)-f(y))^2}{|x-y|^{d+\alpha}}\I_{\{|x-y|\leqslant
1\}}\,dy\,dx.
\end{split}
\end{equation}
For the first inequality we have also used the Cauchy-Schwartz
inequality, and the second inequality follows from the fact that
$B(z_1,1/6)\bigcup B(z_2,1/6) \subseteq B(z_1,1/2)$.

As before, for each $r>1$,  we can find a finite set
$\Pi_r:=\{z_i\}\subseteq B(0,r)$ such that
 \begin{equation*}
0 \in \Pi_r,\,\, B(0,r)\subseteq \bigcup_{z_i\in
\Pi_r}B(z_i,1/6),\quad\sharp\, \Pi_r \leqslant c_5 r^d,
\end{equation*}
where $c_5>0$ is a constant independent of $r$.

Next, for a fixed $z\in \Pi_r$, we can find a sequence
$\{z_i\}_{i=1}^n \subseteq \Pi_r$ such that $z_1=z$, $z_n=0$, $z_i
\neq z_j$ if $i \neq j$,  and $B(z_i,1/6)\bigcap B(z_{i+1},1/6) \neq
\emptyset$ for every $1\leqslant i \leqslant n-1$. Hence, there
exist $c_6$, $c_7>0$ independent of $r>0$ and $z\in\Pi_r$ such that
\begin{equation}\label{e7}
\begin{split}
&\int_{B(z,1/6)}\Big(f(x)-f_{B(0,1/6)}\Big)^2\,dx\\
&=\int_{B(z,1/6)}\Big((f(x)-f_{B(z,1/6)})+\sum_{i=1}^{n-1}(f_{B(z_i,1/6)}
-f_{B(z_{i+1},1/6)})\Big)^2\,dx\\
&\leqslant n
\bigg(\int_{B(z,1/6)}\big(f(x)-f_{B(z,1/6)}\big)^2\,dx\\
&\qquad\qquad\qquad+\sum_{i=1}^{n-1}
\int_{B(z,1/6)}\big(f_{B(z_i,1/6)}
-f_{B(z_{i+1},1/6)}\big)^2 \,dx \bigg)\\
&\leqslant c_6r^d \sum_{i=1}^n\iint_{B(z_i,1/2)\times B(z_i,1/2)}
\frac{(f(x)-f(y))^2}{|x-y|^{d+\alpha}}\I_{\{|x-y|\leqslant 1\}}\,dy\,dx\\
&\leqslant c_7r^{2d}\iint_{B(0,r+1)\times B(0,r+1)}
\frac{(f(x)-f(y))^2}{|x-y|^{d+\alpha}}\I_{\{|x-y|\leqslant
1\}}\,dy\,dx,
\end{split}
\end{equation}
where in the second inequality we have used (\ref{e5}), (\ref{e6})
and the fact that $n\leqslant c_5 r^d$, and the last inequality
follows from the facts that $B(z_i,1/2)\subseteq B(0,r+1)$ for any $z_i\in \Pi_r$ and
$n\leqslant c_5 r^d$.

Therefore, by (\ref{e7}), for each $r>1$,
\begin{align*}
& \int_{B(0,r)} \bigg(f(x)-\frac{\int_{B(0,r)} f(x)\mu_V(dx)}{\mu_V(B(0,r))}\bigg)^2\,\mu_V(dx)\\
&\leqslant \int_{B(0,r)} \big(f(x)-f_{B(0,1/6)}\big)^2\,\mu_V(dx)\\
&\leqslant c_8K(r)\int_{B(0,r)} \big(f(x)-f_{B(0,1/6)}\big)^2\,dx\\
&\leqslant c_8K(r)\sum_{z_i\in \Pi_r}\int_{B(z_i,1/6)} \big(f(x)-f_{B(0,1/6)}\big)^2\,dx\\
&\leqslant c_9K(r)r^{3d}\iint_{B(0,r+1)\times B(0,r+1)}
\frac{(f(x)-f(y))^2}{|x-y|^{d+\alpha}}\I_{\{|x-y|\leqslant 1\}}\,dy\,dx\\
&\leqslant \frac{c_{10}K(r)r^{3d}}{k(r)}\iint_{B(0,r+1)\times
B(0,r+1)} \frac{(f(x)-f(y))^2}{|x-y|^{d+\alpha}}\I_{\{|x-y|\leqslant
1\}}\,dy\,\mu_V(dx),
\end{align*}
where $c_8$, $c_9$ and $c_{10}$ are some positive constants independent of
$r$. This completes the proof.
\end{proof}

\medskip

We have derived the local super Poincar\'{e} inequality and the
local Poincar\'{e} inequality for $\mathscr{E}_{\alpha,V}$. In
particular, for local super Poincar\'{e} inequality we have used
the embedding theorem for subsets of $\R^d$ in the Besov space, but
one can not apply such embedding theorem in the context of
$\mathscr{D}_{\alpha,V}$, since the part of the finite range jump in
the associated kernel is removed. We believe that the local super
Poincar\'{e} inequality does not hold for $\mathscr{D}_{\alpha,V}$, see Remark \ref{remf} (2) below.
However, we still can prove the following local Poincar\'{e}
inequality for $\mathscr{D}_{\alpha,V}$.

\begin{proposition}\label{pr2.4}
There exists a constant $C_4>0$, such that for any $r>3$ and $f\in C_b^{\infty}(\R^d)$,
\begin{equation}\label{pr2.4.1}
\begin{split}
&\int_{B(0,r)} \bigg(f(x)-\frac{\int_{B(0,r)} f(x)\,\mu_V(dx)}{\mu_V(B(0,r))}\bigg)^2\,\mu_V(dx)\\
&\leqslant \frac{C_{4}K(r)r^{2d+\alpha}}{k(r)}\iint_{B(0,r+1)\times
B(0,r+1)} \frac{(f(x)-f(y))^2}{|x-y|^{d+\alpha}}\I_{\{|x-y|>
1\}}\,dy\,\mu_V(dx)\\
&\leqslant \frac{C_{4}K(r)r^{2d+\alpha}}{k(r)}\mathscr{D}_{\alpha,V}(f,f).
\end{split}
\end{equation}
\end{proposition}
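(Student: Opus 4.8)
The plan is to follow the chaining strategy behind Proposition \ref{pr2.2}, but to get around the fact that the large-jump kernel $|x-y|^{-d-\alpha}\I_{\{|x-y|>1\}}$ carries no information for nearby points, so a cover of $B(0,r)$ by small overlapping balls cannot be chained as there. Instead I would connect $f(x)$ with $f(y)$ through an auxiliary point $z$ that is at distance $>1$ from both $x$ and $y$ while staying in $B(0,r+1)$, and then average over all admissible $z$. Throughout write $m$ for Lebesgue measure, $\omega_d:=m(B(0,1))$ and $f_A:=m(A)^{-1}\int_A f\,dx$.

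First I would reduce to a Lebesgue-measure estimate on the ball. By the variational characterization of the mean and $e^{-V}\le K(r)$ on $B(0,r)$,
\[
\int_{B(0,r)}\Big(f-\tfrac{\int_{B(0,r)}f\,\mu_V(dx)}{\mu_V(B(0,r))}\Big)^2\mu_V(dx)\le\int_{B(0,r)}(f-f_{B(0,r)})^2\,\mu_V(dx)\le C_VK(r)\int_{B(0,r)}(f-f_{B(0,r)})^2\,dx,
\]
and the identity $\int_{B(0,r)}(f-f_{B(0,r)})^2\,dx=\frac{1}{2m(B(0,r))}\iint_{B(0,r)^2}(f(x)-f(y))^2\,dx\,dy$ leaves me to bound the last double integral.

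The key step is the three-point detour. Fix $x,y\in B(0,r)$ and put $D_{x,y}:=B(0,r+1)\setminus\big(\overline{B(x,1)}\cup\overline{B(y,1)}\big)$. Since $r>3$ we have $m(B(0,r+1))=\omega_d(r+1)^d\ge 4^d\omega_d$ while $m(\overline{B(x,1)}\cup\overline{B(y,1)})\le 2\omega_d$, hence $m(D_{x,y})\ge\tfrac12\omega_d(r+1)^d\ge\tfrac12\omega_d r^d$. For $z\in D_{x,y}$ one has $|x-z|>1$, $|z-y|>1$ and $|x-z|,|z-y|\le 2(r+1)\le 4r$, so averaging $(f(x)-f(y))^2\le 2(f(x)-f(z))^2+2(f(z)-f(y))^2$ over $z\in D_{x,y}$ and then enlarging the domain to $B(0,r+1)$ yields a constant $C>0$ with
\[
(f(x)-f(y))^2\le C r^{\alpha}\int_{B(0,r+1)}\Big(\frac{(f(x)-f(z))^2}{|x-z|^{d+\alpha}}\I_{\{|x-z|>1\}}+\frac{(f(z)-f(y))^2}{|z-y|^{d+\alpha}}\I_{\{|z-y|>1\}}\Big)\,dz.
\]
Integrating over $(x,y)\in B(0,r)^2$, the first summand is free in $y$ and the second free in $x$, so each picks up a factor $m(B(0,r))\le\omega_d r^d$; enlarging the remaining sets to $B(0,r+1)$ gives $\iint_{B(0,r)^2}(f(x)-f(y))^2\,dx\,dy\le C r^{d+\alpha}\iint_{B(0,r+1)^2}\frac{(f(x)-f(y))^2}{|x-y|^{d+\alpha}}\I_{\{|x-y|>1\}}\,dy\,dx$. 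Combining this with the reduction above (the factor $m(B(0,r))$ in the denominator cancels the gained $r^d$), and then returning to $\mu_V$ on the right via $dx\le(C_Vk(r))^{-1}\mu_V(dx)$ on $B(0,r+1)$, produces \eqref{pr2.4.1} with weight factor of order $K(r)r^{\alpha}/k(r)$; since $r>1$ this is dominated by the asserted $C_4K(r)r^{2d+\alpha}/k(r)$, and replacing $B(0,r+1)^2$ by $\{|x-y|>1\}$ in the last integral gives the comparison with $\mathscr{D}_{\alpha,V}(f,f)$.

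The main obstacle is the detour step, and its crux is the elementary volume bound: removing two unit balls from $B(0,r+1)$ still leaves a set of measure comparable to $r^d$, which is exactly where the hypothesis $r>3$ is used. Everything else — the inequality $(a-b)^2\le 2(a-c)^2+2(c-b)^2$ together with the averaging over $z$, and the bookkeeping of the powers of $r$ and of $K(r),k(r)$ — is routine. I note that the argument in fact gives a better power of $r$ than $r^{2d+\alpha}$; one states it in the weaker form only because that is the form used later.
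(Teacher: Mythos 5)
Your proof is correct, and it in fact gives the sharper factor $K(r)r^{\alpha}/k(r)$ in place of $K(r)r^{2d+\alpha}/k(r)$, as you note at the end. It takes a genuinely different route from the paper's. The paper covers $B(0,r)$ by $\lesssim r^d$ balls $B(z_i,1/2)$, splits these into those at distance $>1$ from the reference ball $B(0,1/2)$ --- for which the oscillation of $f$ against $f_{B(0,1/2)}$ is bounded directly via the large-jump kernel, picking up $\int_{B(0,1/2)}|x-y|^{d+\alpha}\,dy\lesssim r^{d+\alpha}$ --- and nearby ones, for which one detour through a third ball $B(z_0,1/2)$ at distance $>1$ from both is inserted; summing over the cover yields $r^{2d+\alpha}$. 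Your argument avoids the cover entirely: it passes to the identity $\int_B(f-f_B)^2\,dx=\tfrac{1}{2m(B)}\iint_{B\times B}(f(x)-f(y))^2\,dx\,dy$ and, for each pair $(x,y)$, averages the two-step inequality over all detour points $z$ in $D_{x,y}=B(0,r+1)\setminus(\overline{B(x,1)}\cup\overline{B(y,1)})$, which has volume $\gtrsim r^d$ once $r>3$ (the same geometric point the paper needs when choosing $z_0$ for $z\in\Pi_r^2$). Because the reference region is now large rather than a fixed unit ball, the $r^{d+\alpha}$ coming from bounding $|x-z|^{d+\alpha}$ is offset by the $r^{-d}$ from averaging over $D_{x,y}$, leaving only $r^{\alpha}$; and the $m(B(0,r))$ in the denominator of the double-integral identity cancels the $m(B(0,r))$ picked up from the variable that is integrated out freely. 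The $K(r)$, $k(r)$, $C_V$ bookkeeping and the passage to $\mathscr{D}_{\alpha,V}(f,f)$ are correct. The underlying idea --- detour through a point at distance $>1$ from both endpoints while staying inside $B(0,r+1)$ --- is the same as the paper's, but your averaged implementation is cleaner and sharper; the paper's cover-based version is written to stay parallel to its proof of Proposition \ref{pr2.2}.
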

\begin{proof}
Throughout the proof, all the constants $c_i$  $(i\ge1)$ are positive and independent of $r>0$ and $z\in \R^d$.
As before, for each $r>3$,  we can find a finite set
$\Pi_r:=\{z_i\}\subseteq B(0,r)$ such that
 \begin{equation*}
0 \in \Pi_r,\,\, B(0,r)\subseteq \bigcup_{z_i\in
\Pi_r}B(z_i,1/2),\quad\sharp\, \Pi_r \leqslant c_1 r^d.
\end{equation*}
Next, we split
the set $\Pi_r$ as $\Pi_r=\Pi_r^1 \bigcup \Pi_r^2$, where
\begin{equation*}
\Pi_r^1:=\Big\{z \in \Pi_r:\ \text{dist}\big(B(z,1/2),B(0,1/2)\big)>1\Big\},
\end{equation*}
\begin{equation*}
\Pi_r^2:=\Big\{z \in \Pi_r:\ \text{dist}\big(B(z,1/2),B(0,1/2)\big)\le 1\Big\},
\end{equation*}
and $\text{dist}(A,B)$ denotes the distance between the subsets $A$, $B$ in $\R^d$.

For each $z\in \Pi_r^1$, we have
\begin{equation}\label{e2a}
\begin{split}
& \int_{B(z,1/2)}\big(f(x)-f_{B(0,1/2)}\big)^2 \,dx\\
&=
\frac{1}{(m(B(0,1/2)))^2}\int_{B(z,1/2)}
\Big(\int_{B(0,1/2)} \big(f(x)-f(y)\big)\,dy\Big)^2\,dx \\
&\le c_2 \int_{B(z,1/2)}\Big(\int_{B(0,1/2)}
\frac{(f(x)-f(y))^2}{|x-y|^{d+\alpha}}\,dy\Big)
\Big(\int_{B(0,1/2)} |x-y|^{d+\alpha}\,dy\Big)\,dx\\
&\le c_3 r^{d+\alpha} \iint_{B(0,r+1)\times B(0,r+1)}
\frac{(f(x)-f(y))^2}{|x-y|^{d+\alpha}}\I_{\{|x-y|>1\}}\,dy\,dx,
\end{split}
\end{equation}
Here, the first inequality follows from the Cauchy-Schwartz
inequality, and in the last inequality we have used the facts that for all $z\in \Pi_r^1$, $B(z,1/2)\subset B(0,r+1)$; and if  $z\in \Pi_r^1$, then for each
$x\in B(z,1/2)$ and $y\in B(0,1/2)$,
$1<|x-y|\le 2(r+1)$.

For each $z\in \Pi_r^2$, since $r>3$, there exists $z_0 \in B(0,r)$ such that
for each $x\in B(z_0, 1/2)$ and $y \in B(z,1/2)\bigcup B(0,1/2)$, it holds that $|x-y|>1$. Hence,
\begin{equation*}
\begin{split}
& \int_{B(z,1/2)}\big(f(x)-f_{B(0,1/2)}\big)^2 dx\\
& \le 2 \int_{B(z,1/2)}\big(f(x)-f_{B(z_0,1/2)}\big)^2 dx+
2\int_{B(z,1/2)}\big(f_{B(z_0,1/2)}-f_{B(0,1/2)}\big)^2 dx.
\end{split}
\end{equation*}
Since for $x\in B(z_0, 1/2)$ and $y \in B(z,1/2),$ $1<|x-y|\le
2(r+1),$ we can follow the proof of (\ref{e2a}) and get that
\begin{equation*}
\begin{split}
&\int_{B(z,1/2)}\big(f(x)-f_{B(z_0,1/2)}\big)^2 \,dx\\
& \le c_3 r^{d+\alpha}
\iint_{B(0,r+1)\times B(0,r+1)}
\frac{(f(x)-f(y))^2}{|x-y|^{d+\alpha}}\I_{\{|x-y|>1\}}\,dy\,dx.
\end{split}
\end{equation*}
On the other hand, according to the argument of (\ref{e6}) and noticing that for each $x\in B(z_0, 1/2)$ and $y \in B(0,1/2)$,  $1<|x-y|\le 2(r+1)$, we have
\begin{equation*}
\begin{split}
&\big(f_{B(0,1/2)}-f_{B(z_0,1/2)}\big)^2\\
& \le c_4 r^{d+\alpha}
\iint_{B(0,r+1)\times B(0,r+1)}
\frac{(f(x)-f(y))^2}{|x-y|^{d+\alpha}}\I_{\{|x-y|>1\}}
\,dy\,dx.
\end{split}
\end{equation*}
Combining both estimates above, we obtain that for each $z\in \Pi_r^2$,
\begin{equation}\label{e3a}
\begin{split}
&\int_{B(z,1/2)}\big(f(x)-f_{B(0,1/2)}\big)^2 \,dx\\
&\le c_5 r^{d+\alpha}
\iint_{B(0,r+1)\times B(0,r+1)}
\frac{(f(x)-f(y))^2}{|x-y|^{d+\alpha}}\I_{\{|x-y|>1\}}\,dy\,dx.
\end{split}
\end{equation}

Therefore, by \eqref{e2a} and (\ref{e3a}), for each $r>3$,
\begin{equation*}
\begin{split}
& \int_{B(0,r)} \bigg(f(x)-\frac{\int_{B(0,r)} f(x)\mu_V(dx)}{\mu_V(B(0,r))}\bigg)^2\,\mu_V(dx)\\
&\leqslant \int_{B(0,r)} \big(f(x)-f_{B(0,1/2)}\big)^2\,\mu_V(dx)\\
&\leqslant c_6K(r)\int_{B(0,r)} \big(f(x)-f_{B(0,1/2)}\big)^2\,dx\\
&\leqslant c_6K(r)\sum_{z_i\in \Pi_r}\int_{B(z_i,1/2)} \big(f(x)-f_{B(0,1/2)}\big)^2\,dx\\
&\leqslant c_7K(r)r^{2d+\alpha}\iint_{B(0,r+1)\times B(0,r+1)}
\frac{(f(x)-f(y))^2}{|x-y|^{d+\alpha}}\I_{\{|x-y|> 1\}}\,dy\,dx\\
&\leqslant \frac{c_{8}K(r)r^{2d+\alpha}}{k(r)}\iint_{B(0,r+1)\times
B(0,r+1)} \frac{(f(x)-f(y))^2}{|x-y|^{d+\alpha}}\I_{\{|x-y|>
1\}}\,dy\,\mu_V(dx),
\end{split}
\end{equation*}
which completes the proof.
\end{proof}
\begin{remark}
The constants $r^{3d}$ and $r^{2d+\alpha}$ in the local Poincar\'{e} inequality
\eqref{pr2.2.1} and (\ref{pr2.4.1}) are not optimal, and they come from counting the
number of elements in $\Pi_r$. By taking a cover with some
intersection property, we can expect to get better estimates, e.g.\
see \cite[Lemma 5.11]{CKK}. However, the estimates here are
enough for our application.
\end{remark}

\medskip

As a direct consequence of Propositions \ref{pr2.2} and \ref{pr2.4}, we can derive the following weak
Poincar\'{e} inequality for $\mathscr{E}_{\alpha,V}$ and $\mathscr{D}_{\alpha,V}$, by the local Poincar\'{e} inequality
\eqref{pr2.2.1} and \eqref{pr2.4.1}, respectively.
\begin{proposition}\label{pr2.3} $(1)$ There is a constant $C_5>0$ such that for every $s>0$ and $f\in
C_b^{\infty}(\R^d)$ with $\mu_V(f)=0$,
\begin{equation*}
\begin{split}
\mu_V(f^2)\leqslant C_5\alpha_1(s)
\mathscr{E}_{\alpha,V}(f,f)+s\|f\|_{\infty}^2,
\end{split}
\end{equation*} where
\begin{equation*}
\alpha_1(s):=\inf \bigg\{\frac{r^{3d}K(r)}{k(r)}:\
\mu_V(B(0,r)^c)\leqslant \frac{s}{1+s}\textrm{ and }r>1\bigg\}.
\end{equation*}
$(2)$ There is a constant $C_6>0$ such that for every $s>0$ and $f\in
C_b^{\infty}(\R^d)$ with $\mu_V(f)=0$,
\begin{equation*}
\begin{split}
\mu_V(f^2) \leqslant C_6\alpha_2(s)
\mathscr{D}_{\alpha,V}(f,f)+s\|f\|_{\infty}^2,
\end{split}
\end{equation*} where\begin{equation*}
\alpha_2(s):=\inf \bigg\{\frac{r^{2d+\alpha}K(r)}{k(r)}:\
\mu_V(B(0,r)^c)\leqslant \frac{s}{1+s}\textrm{ and }r>3\bigg\}.
\end{equation*}
\end{proposition}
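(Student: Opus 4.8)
The plan is to deduce both weak Poincar\'e inequalities from the local Poincar\'e inequalities of Propositions \ref{pr2.2} and \ref{pr2.4} by truncating outside a large ball; I describe part (1), since (2) follows verbatim with Proposition \ref{pr2.4} and the constraint $r>3$ replacing Proposition \ref{pr2.2} and $r>1$ (this is why the exponent $r^{2d+\alpha}$ rather than $r^{3d}$ appears in $\alpha_2$).

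Fix $f\in C_b^\infty(\R^d)$ with $\mu_V(f)=0$, and for $r>1$ write $\bar f_r:=\mu_V(B(0,r))^{-1}\int_{B(0,r)}f\,d\mu_V$ for the local $\mu_V$-average of $f$ on $B(0,r)$. The first step is to note that, because $\mu_V(f)=0$, the variance $\mu_V(f^2)$ equals $\inf_{c\in\R}\mu_V((f-c)^2)$, so in particular
\begin{equation*}
\mu_V(f^2)\le\mu_V\big((f-\bar f_r)^2\big)=\int_{B(0,r)}(f-\bar f_r)^2\,d\mu_V+\int_{B(0,r)^c}(f-\bar f_r)^2\,d\mu_V .
\end{equation*}
Next I would bound the first term on the right by the local Poincar\'e inequality \eqref{pr2.2.1}, which gives exactly $C_3 k(r)^{-1}K(r)r^{3d}\,\mathscr{E}_{\alpha,V}(f,f)$, and the second by $4\|f\|_\infty^2\,\mu_V(B(0,r)^c)$, using the crude estimate $|\bar f_r|\le\mu_V(B(0,r))^{-1}\int_{B(0,r)}|f|\,d\mu_V\le\|f\|_\infty$, so that $|f-\bar f_r|\le2\|f\|_\infty$ off $B(0,r)$.

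The last step is to optimise over $r$. For a given $s>0$ the set of $r>1$ with $\mu_V(B(0,r)^c)\le s/(1+s)$ is non-empty, since $\mu_V$ is a probability measure and hence $\mu_V(B(0,r)^c)\to0$ as $r\to\infty$; for any such $r$ the two bounds combine to $\mu_V(f^2)\le C_3 k(r)^{-1}K(r)r^{3d}\,\mathscr{E}_{\alpha,V}(f,f)+4s\|f\|_\infty^2$, because $s/(1+s)\le s$. As the second term does not depend on $r$, taking the infimum of the first term over all admissible $r$ and recalling the definition of $\alpha_1(s)$ yields the stated inequality; the harmless constant $4$ in front of $s\|f\|_\infty^2$ is normalised to $1$ by replacing $s$ with $s/4$ throughout, or equivalently by using the constraint $\mu_V(B(0,r)^c)\le s/(4+s)$ in the definition of $\alpha_1$, neither of which affects the asymptotics of $\alpha_1$ needed in the later applications.

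I do not expect any genuine difficulty here. The one point deserving care is the treatment of the mean: the local Poincar\'e inequalities \eqref{pr2.2.1} and \eqref{pr2.4.1} are stated relative to the \emph{local} $\mu_V$-average on $B(0,r)$ rather than the global mean $\mu_V(f)=0$, and it is precisely the identity $\mu_V(f^2)=\inf_c\mu_V((f-c)^2)$ that lets one insert that local average at no cost. Everything else reduces to the pointwise bound $|\bar f_r|\le\|f\|_\infty$ and the convergence $\mu_V(B(0,r)^c)\to0$.
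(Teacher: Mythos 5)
Your proof is correct and follows essentially the same route as the paper: local Poincar\'e inequality on $B(0,r)$ for the main term, a crude tail bound on $B(0,r)^c$, and optimisation over $r$ subject to $\mu_V(B(0,r)^c)\le s/(1+s)$. The only real difference is in the bookkeeping of the tail: you control $\int_{B(0,r)^c}(f-\bar f_r)^2\,d\mu_V$ via $|\bar f_r|\le\|f\|_\infty$, giving $4s\|f\|_\infty^2$ and requiring a final reparameterisation, whereas the paper decomposes $\mu_V(f^2)=\mu_V(f^2\I_{B(0,r)})+\mu_V(f^2\I_{B(0,r)^c})$, rewrites the local Poincar\'e bound as $\mu_V(f^2\I_{B(0,r)})\le \tfrac{c_1K(r)r^{3d}}{k(r)}\mathscr{E}_{\alpha,V}(f,f)+\tfrac{\mu_V(f\I_{B(0,r)})^2}{\mu_V(B(0,r))}$, and then uses $\mu_V(f)=0$ to get $\mu_V(f\I_{B(0,r)})^2=\mu_V(f\I_{B(0,r)^c})^2\le\tfrac{s^2}{(1+s)^2}\|f\|_\infty^2$; the resulting sum $\tfrac{s^2}{1+s}+\tfrac{s}{1+s}=s$ lands exactly on $s\|f\|_\infty^2$ with no constant to absorb. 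Both are valid; the paper's version is marginally tighter because it exploits $\mu_V(f)=0$ at the level of the mean rather than via the cruder $|\bar f_r|\le\|f\|_\infty$.
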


\begin{proof} The proof is based on \cite[Theorem 4.3.1]{WBook} (see also \cite[Theorem 3.1]{RW}). Here we only prove assertion (1), since the proof of assertion (2) is similar. First, according to \eqref{pr2.2.1}, there exists a constant $c_1>0$ such that for any $r>1$ and $f\in C_b^\infty(\R^d)$,
$$\mu_V(f^2\I_{B(0,r)})\le \frac{c_1 K(r)r^{3d}}{k(r)}\mathscr{E}_{\alpha,V}(f,f)+\frac{\mu_V(f\I_{B(0,r)})^2}{\mu_V(B(0,r))}.$$ For any $s>0$, let $r>1$ such that $\mu_V(B(0,r)^c)\le \frac{s}{1+s}$, i.e.\ $\mu_V(B(0,r))\ge\frac{ 1}{1+s}$. Then, for any $f\in C_b^\infty(\R^d)$ with $\mu_V(f)=0$, one has
$$\mu_V(f\I_{B(0,r)})^2=\mu_V(f\I_{B(0,r)^c})^2\le \frac{s^2}{(1+s)^2}\|f\|_\infty^2.$$ Therefore,
\begin{equation*}
\begin{split}\mu_V(f^2)=&\mu_V(f^2\I_{B(0,r)})+\mu_V(f^2\I_{B(0,r)^c})\\
\le &\frac{c_1K(r)r^{3d}}{k(r)}\mathscr{E}_{\alpha,V}(f,f) +\frac{\mu_V(f\I_{B(0,r)})^2}{\mu(B(0,r))}+\frac{s}{1+s}\|f\|_\infty^2\\
\le &\frac{c_1 K(r)r^{3d}}{k(r)}\mathscr{E}_{\alpha,V}(f,f)+s\|f\|_\infty^2,\end{split}
\end{equation*}which yields the required assertion.
\end{proof}

\section{Functional Inequalities for Dirichlet Forms with Finite Range Jumps}\label{sec3}
\subsection{Lyapunov Type Condition for $\mathscr{E}_{\alpha,V}$}
For any $f$, $g\in C_b^\infty(\R^d)$, let
\begin{equation*}
\mathscr{E}_{\alpha,V}(f,g)=\frac{1}{2}\iint_{\{|x-y|\leqslant 1\}}\frac{(f(x)-f(y))(g(x)-g(y))}
{|x-y|^{d+\alpha}}\,dy\,\mu_V(dx).
\end{equation*}
We define the corresponding truncated Dirichlet form as follows:
\begin{equation*}
\widetilde{\mathscr{E}}_{\alpha,V}(f,g):=\frac{1}{2}\iint_{\{{1}/{2}\leqslant |x-y|\leqslant 1\}}\frac{(f(x)-f(y))(g(x)-g(y))}
{|x-y|^{d+\alpha}}\,dy\,\mu_V(dx).
\end{equation*}

Let $C_c^\infty(\R^d)$ be the set of smooth functions on $\R^d$ with
compact supports. The following result presents the explicit
expression for the generator associated with the truncated Dirichlet
form $\widetilde{\mathscr{E}}_{\alpha,V}$ on $C_c^\infty(\R^d)$.
\begin{lemma}\label{le3.1}
For each $f,g \in C_c^{\infty}(\R^d)$,
\begin{equation*}
\widetilde{\mathscr{E}}_{\alpha,V}(f,g)=-\int f(x)\widetilde{L}_{\alpha,V}g(x)\,\mu_V(dx)
=-\int g(x)\widetilde{L}_{\alpha,V}f(x)\,\mu_V(dx),
\end{equation*}
where
\begin{equation}\label{e8}
\widetilde{L}_{\alpha,V}f(x):=\frac{1}{2}\int_{\{{1}/{2}\leqslant |x-y|\leqslant 1\}}
\big(f(y)-f(x)\big)\frac{\big(1+e^{V(x)-V(y)}\big)}{|x-y|^{d+\alpha}}\,dy.
\end{equation}
\end{lemma}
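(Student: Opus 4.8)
The statement is an integration-by-parts identity for the truncated bilinear form $\widetilde{\mathscr{E}}_{\alpha,V}$ on compactly supported smooth functions, together with the identification of the associated generator $\widetilde{L}_{\alpha,V}$ given in \eqref{e8}. The natural approach is a direct computation: start from the defining double integral for $\widetilde{\mathscr{E}}_{\alpha,V}(f,g)$, expand the product $(f(x)-f(y))(g(x)-g(y))$, and exploit the symmetry of the kernel $|x-y|^{-d-\alpha}\I_{\{1/2\le|x-y|\le1\}}$ together with Fubini to move all the $g$-differences onto a single evaluation point. Writing $\mu_V(dx)=C_Ve^{-V(x)}\,dx$ makes the measure explicit, which is what produces the weight $1+e^{V(x)-V(y)}$ after the change of variables $x\leftrightarrow y$.

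First I would write
\begin{equation*}
\widetilde{\mathscr{E}}_{\alpha,V}(f,g)=\frac{C_V}{2}\iint_{\{1/2\le|x-y|\le1\}}\frac{(f(x)-f(y))(g(x)-g(y))}{|x-y|^{d+\alpha}}\,e^{-V(x)}\,dy\,dx,
\end{equation*}
split $(g(x)-g(y))=-(g(y)-g(x))$ and distribute, obtaining two terms; in the term containing $(f(x)-f(y))g(x)e^{-V(x)}$ I keep $x$ as is, while in the term containing $(f(x)-f(y))g(y)e^{-V(x)}$ I swap the names of $x$ and $y$ (legitimate since the domain $\{1/2\le|x-y|\le1\}$ and the kernel are symmetric). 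After the swap this second term becomes an integral of $(f(y)-f(x))g(x)e^{-V(y)}$, i.e.\ it has the same $g(x)$-factor and the same $(f(y)-f(x))$ up to sign as will appear when I symmetrize the first term. Collecting, all the $g$-dependence localizes at $x$ and one reads off
\begin{equation*}
\widetilde{\mathscr{E}}_{\alpha,V}(f,g)=-C_V\int g(x)\Big(\tfrac12\int_{\{1/2\le|x-y|\le1\}}(f(y)-f(x))\frac{e^{-V(x)}+e^{-V(y)}}{|x-y|^{d+\alpha}}\,dy\Big)dx,
\end{equation*}
and dividing the inner integrand by $e^{-V(x)}$ to restore the measure $\mu_V(dx)$ produces exactly $-\int g(x)\widetilde{L}_{\alpha,V}f(x)\,\mu_V(dx)$ with $\widetilde{L}_{\alpha,V}$ as in \eqref{e8}. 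The identity with $f$ and $g$ interchanged then follows either by symmetry of the bilinear form or by repeating the argument with the roles reversed.

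**The main obstacle.** There is no deep difficulty here; the only point requiring care is the justification of Fubini's theorem and of the variable swap, i.e.\ verifying absolute integrability of the relevant integrands. This is where $f,g\in C_c^\infty(\R^d)$ (rather than merely $C_b^\infty$) is used: on $\{1/2\le|x-y|\le1\}$ the kernel $|x-y|^{-d-\alpha}$ is bounded above by $2^{d+\alpha}$, the increments $f(y)-f(x)$ and $g(y)-g(x)$ are bounded (and supported, in $x$, in a compact set enlarged by $1$), and $V$ is locally bounded so $e^{-V}$ and $e^{V(x)-V(y)}$ are bounded on the relevant compact region; hence every integrand in sight is bounded with compact $x$-support and the $y$-integration is over a bounded annulus, giving finiteness. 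I would state this integrability bound once at the start so that all subsequent manipulations (splitting, Fubini, renaming variables) are manifestly justified, and then the computation above closes the proof.
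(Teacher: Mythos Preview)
Your argument is correct and entirely self-contained: the absolute integrability you establish (bounded kernel on the annulus, compact support, locally bounded $V$) legitimizes Fubini and the swap $x\leftrightarrow y$, after which the symmetrization
\[
\widetilde{\mathscr{E}}_{\alpha,V}(f,g)=-\frac{C_V}{2}\int g(x)\Big(\int_{\{1/2\le|x-y|\le1\}}(f(y)-f(x))\,\frac{e^{-V(x)}+e^{-V(y)}}{|x-y|^{d+\alpha}}\,dy\Big)dx
\]
is exactly $-\int g\,\widetilde{L}_{\alpha,V}f\,d\mu_V$ once one factors out $e^{-V(x)}$.

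The paper proceeds differently. It invokes \cite[Theorem 2.1]{CW}, which for this class of non-local forms already furnishes a generator $\widetilde{L}^*_{\alpha,V}$ written with a principal-value term containing $\nabla f(x)\cdot z\,\I_{\{|z|\le1\}}$ plus an explicit drift term. Specializing to the truncated kernel (supported on $\{1/2\le|z|\le1\}$), the principal-value correction is superfluous, and the paper then observes that the residual drift term vanishes identically by the change of variable $z\mapsto -z$, leaving exactly \eqref{e8}. In short, the paper imports a general formula and simplifies it, whereas you derive the formula from scratch by a direct symmetrization. Your route is more elementary and avoids the external citation; the paper's route is quicker given that the cited result is available and also makes transparent why no gradient (drift) term survives in the truncated case.
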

\begin{proof}
According to \cite[Theorem 2.1]{CW}, for each $f$, $g \in C_c^{\infty}(\R^d)$,
$$\widetilde{\mathscr{E}}_{\alpha,V}(f,g)=-\int f(x)\widetilde{L}^*_{\alpha,V}g(x)\,\mu_V(dx)
=-\int g(x)\widetilde{L}^*_{\alpha,V}f(x)\,\mu_V(dx),$$
where
\begin{equation*}
\begin{split}
\widetilde{L}^*_{\alpha,V}f(x):=&\frac{1}{2}\int_{\{{1}/{2}\leqslant |z|\leqslant 1\}}\!\!
\Big(f(x+z)-f(x)-\nabla f(x)\cdot z\I_{\{|z|\le 1\}}\Big)\frac{\big(1+e^{V(x)-V(x+z)}\big)}{|z|^{d+\alpha}}\,dz\\
&+\frac{1}{4}\nabla f(x)\cdot \bigg[\int_{\{{1}/{2}\leqslant |z| \leqslant 1\}} z
\big(e^{V(x)-V(x+z)}-e^{V(x)-V(x-z)}\big)\frac{1}{|z|^{d+\alpha}}\,dz\bigg]\\
=&\frac{1}{2}\int_{\{{1}/{2}\leqslant |x-y|\leqslant 1\}}
\big(f(y)-f(x)\big)\frac{\big(1+e^{V(x)-V(y)}\big)}{|x-y|^{d+\alpha}}\,dy\\
&-\frac{1}{4}\nabla f(x)\cdot \bigg[\int_{\{{1}/{2}\leqslant |z| \leqslant 1\}} z
\big(e^{V(x)-V(x+z)}+e^{V(x)-V(x-z)}\big)\frac{1}{|z|^{d+\alpha}}\,dz\bigg]\\
=&:\widetilde{L}_{\alpha,V,1}f(x)+\widetilde{L}_{\alpha,V,2}f(x).
\end{split}
\end{equation*}
It is easy to see that for any $f\in C_c^\infty(\R^d)$, $\widetilde{L}_{\alpha,V,1}f(x)$ and $\widetilde{L}_{\alpha,V,2}f(x)$ are well defined.
Changing variable from $z$ to $-z$, we can see that for all $x\in\R^d$,
$\widetilde{L}_{\alpha,V,2}f(x)=0$, which gives us the desired expression (\ref{e8}).
\end{proof}

According to (\ref{e8}), for every $f\in C(\R^d)$ (the set
of continuous functions on $\R^d$) and $x\in\R^d$,
$\widetilde{L}_{\alpha,V}f(x)$ is well defined, and the function
$x\mapsto\widetilde{L}_{\alpha,V}f(x)$ is locally bounded. Then,
repeating the proof of \cite[Propsoition 3.2]{CW}, we get
\begin{lemma}\label{le3.2}
For every $f\in C_c^{\infty}(\R^d)$ and $\phi\in C(\R^d)$ with $\phi>0$,
\begin{equation*}\label{le3.2.1}
-\int f^2\frac{\widetilde{L}_{\alpha,V}\phi}{\phi}d\mu_V\leqslant \widetilde{\mathscr{E}}_{\alpha,V}(f,f).
\end{equation*}
\end{lemma}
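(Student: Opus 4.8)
The plan is to reduce the statement to an elementary pointwise inequality after symmetrising the jump kernel. First I would unfold the definition of $\widetilde{L}_{\alpha,V}\phi$ from \eqref{e8} and write, using $\mu_V(dx)=C_Ve^{-V(x)}\,dx$,
$$-\int f^2\frac{\widetilde{L}_{\alpha,V}\phi}{\phi}\,d\mu_V=-\frac{C_V}{2}\iint_{\{1/2\le|x-y|\le1\}}\frac{f^2(x)}{\phi(x)}\big(\phi(y)-\phi(x)\big)\,J(x,y)\,dy\,dx,$$
where $J(x,y):=\dfrac{e^{-V(x)}+e^{-V(y)}}{|x-y|^{d+\alpha}}$ is symmetric in $x$ and $y$; here I have used the identity $e^{-V(x)}\big(1+e^{V(x)-V(y)}\big)=e^{-V(x)}+e^{-V(y)}$. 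Since $f\in C_c^\infty(\R^d)$ and the $y$-integration is confined to the bounded annulus $\{1/2\le|x-y|\le1\}$, the double integral runs over a compact subset of $\R^d\times\R^d$ on which $V$ is bounded (it is locally bounded), $\phi$ is bounded and bounded away from $0$ (it is continuous and positive), and $|x-y|^{-(d+\alpha)}$ is bounded; hence the integrand is absolutely integrable and Fubini applies. The same compactness remark shows the left-hand side of the claimed inequality is well defined, since $x\mapsto\widetilde{L}_{\alpha,V}\phi(x)$ is locally bounded and $f^2/\phi$ has compact support.

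Next I would symmetrise: averaging the displayed double integral with the one obtained by interchanging $x$ and $y$ (legitimate because $J$ is symmetric), it equals
$$-\frac{C_V}{4}\iint_{\{1/2\le|x-y|\le1\}}\Big[\frac{f^2(x)}{\phi(x)}\big(\phi(y)-\phi(x)\big)+\frac{f^2(y)}{\phi(y)}\big(\phi(x)-\phi(y)\big)\Big]J(x,y)\,dy\,dx.$$
Now comes the only genuine computation, a pointwise bound: for reals $a,b$ and positive reals $s,t$,
$$-\Big[\frac{a^2}{s}(t-s)+\frac{b^2}{t}(s-t)\Big]=2ab-\Big(\frac{a^2t}{s}+\frac{b^2s}{t}\Big)+(a-b)^2\le(a-b)^2,$$
where the last inequality is $\frac{a^2t}{s}+\frac{b^2s}{t}\ge2|ab|\ge2ab$, i.e.\ AM--GM. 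Inserting $a=f(x)$, $b=f(y)$, $s=\phi(x)$, $t=\phi(y)$ under the integral and using $J\ge0$ yields
$$-\int f^2\frac{\widetilde{L}_{\alpha,V}\phi}{\phi}\,d\mu_V\le\frac{C_V}{4}\iint_{\{1/2\le|x-y|\le1\}}\big(f(x)-f(y)\big)^2J(x,y)\,dy\,dx.$$

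Finally I would undo the symmetrisation on the right: since $\big(f(x)-f(y)\big)^2$ and $|x-y|^{-(d+\alpha)}$ are symmetric, the last expression equals $\frac{C_V}{2}\iint_{\{1/2\le|x-y|\le1\}}\big(f(x)-f(y)\big)^2 e^{-V(x)}|x-y|^{-(d+\alpha)}\,dy\,dx=\widetilde{\mathscr{E}}_{\alpha,V}(f,f)$, which is the assertion. I do not expect a real obstacle: the truncation to $\{1/2\le|x-y|\le1\}$ only makes the integrability bookkeeping easier than in \cite[Proposition 3.2]{CW}, and the heart of the matter is the one-line AM--GM estimate above; the only point requiring a little care is verifying absolute integrability so that the symmetrisation (Fubini) is justified, and that is dispatched by the compactness remark in the first step.
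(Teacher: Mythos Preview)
Your proof is correct and follows exactly the standard symmetrisation plus AM--GM argument that the paper invokes by citing \cite[Proposition 3.2]{CW}; the truncation to $\{1/2\le|x-y|\le1\}$ indeed makes the integrability checks routine, and you have handled them properly. There is nothing to add.
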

Now we present the Lyapunov type condition for
$\widetilde{L}_{\alpha,V}$,
\begin{lemma}\label{le3.3}
Let $\phi \in C(\R^d)$
such that $\phi>1$ and $\phi(x)=e^{|x|}$ for $|x|>1$. If
\begin{equation}\label{e9}
\liminf_{|x|\rightarrow \infty}\frac{\inf_{|x|-1 \le |z|\leqslant
|x|-{1}/{2}}e^{-V(z)}} {\sup_{|x| \le |z|\le |x|+1}e^{-V(z)}}>
\frac{1}{\alpha}2^{2d+1}(e+e^{{1}/{2}})(2^{\alpha}-1),
\end{equation}
then there are positive constants $C_1$, $b$ and $r_0>0$ such that
for all $x\in\R^d$,
\begin{equation}\label{e10}
\widetilde{L}_{\alpha,V}\phi(x)\leqslant -C_1\Big(e^{V(x)}\inf_{|x|-1 \le |z|\leqslant |x|-{1}/{2}}e^{-V(z)}\Big)\phi(x)
+b\I_{B(0,r_0)}(x)
\end{equation}
\end{lemma}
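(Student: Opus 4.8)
The plan is to estimate $\widetilde{L}_{\alpha,V}\phi(x)$ directly from the explicit formula \eqref{e8} by splitting the integration region $\{1/2 \le |x-y| \le 1\}$ according to whether the jump moves the point closer to or further from the origin. For $|x|$ large, write $y = x + z$ with $1/2 \le |z| \le 1$ and note $\phi(y) - \phi(x) = e^{|x+z|} - e^{|x|}$. The key geometric observation is that the hemisphere of directions $z$ with $|x+z| \le |x|$ (roughly, those pointing back toward the origin) contributes a strongly negative term, because there $e^{|x+z|} - e^{|x|} \le e^{|x| - 1/2} - e^{|x|} = -(1 - e^{-1/2})e^{|x|}$ when $|z|$ is close to $1$, and more generally on a set of directions of positive measure one gets $|x+z| \le |x| - 1/2$. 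Meanwhile the opposite hemisphere contributes a positive term bounded by $(e^{|x|+1} - e^{|x|}) = (e-1)e^{|x|}$ times the relevant portion of the kernel. The factor $(1 + e^{V(x) - V(y)})$ must be handled carefully: on the "good" hemisphere we keep only the $1$ (discarding the nonnegative exponential would go the wrong way, so instead one bounds $e^{V(x)-V(y)} \le e^{V(x)} \sup_{|x| \le |z| \le |x|+1} e^{-V(z)}$ type quantities), while on the negative side one exploits $e^{V(x)-V(y)} \ge e^{V(x)} \inf_{|x|-1 \le |z| \le |x|-1/2} e^{-V(z)}$ when $y$ lies in the shell where $V$ is being controlled.

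Concretely, I would fix $x$ with $|x|$ large, introduce the cone $\Gamma_x := \{z : 1/2 \le |z| \le 1,\ |x + z| \le |x| - 1/2\}$ (nonempty with volume bounded below uniformly in $x$ once $|x| > 1$, by elementary geometry — this is where the factor $2^{-2d}$ or similar enters), and estimate
\begin{equation*}
\widetilde{L}_{\alpha,V}\phi(x) \le \frac{1}{2}\int_{\Gamma_x} (\phi(y) - \phi(x)) \frac{e^{V(x) - V(y)}}{|x-y|^{d+\alpha}}\,dy + \frac{1}{2}\int_{\{1/2 \le |z| \le 1\} \setminus \Gamma_x} (\phi(y) - \phi(x))_+ \frac{1 + e^{V(x)-V(y)}}{|x-y|^{d+\alpha}}\,dy.
\end{equation*}
On $\Gamma_x$ one has $\phi(y) - \phi(x) \le -(1 - e^{-1/2})e^{|x|} = -(1-e^{-1/2})\phi(x)$ and $e^{V(x)-V(y)} \ge e^{V(x)} \inf_{|x|-1 \le |z| \le |x|-1/2} e^{-V(z)}$, while $|x-y|^{-(d+\alpha)} \ge 1$; combined with $|\Gamma_x| \ge c\,2^{-2d}$ this yields the main negative term of order $e^{V(x)} \big(\inf_{\cdots} e^{-V(z)}\big) \phi(x)$. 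On the complement, $(\phi(y)-\phi(x))_+ \le (e - 1)\phi(x) \le (e + e^{1/2})\phi(x)$ (being generous with the constant to match \eqref{e9}), $e^{V(x)-V(y)} \le e^{V(x)} \sup_{|x| \le |z| \le |x|+1} e^{-V(z)}$ on the part where $|y| \ge |x|$ — and a symmetric/easier bound on the thin remaining piece — and $\int |x-y|^{-(d+\alpha)}\,dy$ over the annulus $\{1/2 \le |x-y| \le 1\}$ equals $\omega_{d-1}\int_{1/2}^1 r^{-1-\alpha}\,dr = \frac{\omega_{d-1}}{\alpha}(2^\alpha - 1)$, which is the source of the $\frac{1}{\alpha}(2^\alpha - 1)$ factor. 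Adding the two pieces, the coefficient of $e^{V(x)}(\inf_{\cdots} e^{-V(z)})\phi(x)$ is negative precisely when \eqref{e9} holds; taking $\liminf$ and absorbing the finitely many $x$ with $|x| \le r_0$ into the term $b\,\I_{B(0,r_0)}(x)$ (using that $\widetilde{L}_{\alpha,V}\phi$ is locally bounded, as noted after Lemma \ref{le3.1}) gives \eqref{e10}.

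The main obstacle is the geometry of $\Gamma_x$ and the bookkeeping of the perturbation factor: one must verify that the set of "inward" jump directions has measure bounded below by an explicit constant uniformly in $|x| > 1$ (this forces the appearance of $2^{2d}$), and simultaneously that on this set the shell $\{|x|-1 \le |y| \le |x| - 1/2\}$ actually captures $y = x+z$ so that the $\inf e^{-V}$ bound applies — equivalently, that $\Gamma_x \subseteq \{z : |x|-1 \le |x+z| \le |x|-1/2\}$, which constrains how $\Gamma_x$ must be defined. The constant $2^{2d+1}(e+e^{1/2})(2^\alpha-1)/\alpha$ in \eqref{e9} is exactly what survives when one is deliberately lossy everywhere (replacing $e-1$ by $e+e^{1/2}$, bounding all relevant measure ratios by powers of $2$, using $|x-y|^{-(d+\alpha)} \in [1, 2^{d+\alpha}]$ crudely), so I would not attempt to optimize it. Everything else is a routine, if slightly tedious, estimation.
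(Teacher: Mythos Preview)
Your proposal is correct and matches the paper's approach essentially line for line. The paper organizes the computation slightly differently---it first splits the kernel factor as $1 + e^{V(x)-V(x+z)}$ and handles each summand over the whole annulus, whereas you split by region $\Gamma_x$ versus its complement first---but the resulting estimates are identical: the paper's $c_1 = \int_{\{1/2\le|z|\le 1\}}(e^{|z|}-1)|z|^{-(d+\alpha)}\,dz$ and $c_2 = (1-e^{-1/2})\,m(B(0,1/4))$ are exactly the constants your scheme produces, and the threshold $2c_1/c_2$ is then bounded by the constant in \eqref{e9}. For the geometric step you flag as the main obstacle, the paper verifies $B(z_0,1/4)\subset\Gamma_x$ with $z_0=-\tfrac{3x}{4|x|}$, which gives $|\Gamma_x|\ge m(B(0,1/4))$ and supplies the $2^{2d}$ factor you anticipated. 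Two minor clean-ups: your sentence ``on the good hemisphere we keep only the $1$'' is inverted relative to what you (correctly) do in the displayed inequality, where on $\Gamma_x$ you drop the $1$ and keep $e^{V(x)-V(y)}$; and the ``thin remaining piece'' $\{|x|-1/2<|x+z|<|x|\}$ contributes nothing once you pass to $(\phi(y)-\phi(x))_+$, so no separate bound is needed there---the only positive contribution comes from $\{|x+z|\ge|x|\}$, where one uses $1\le e^{V(x)}\sup_{|x|\le|w|\le|x|+1}e^{-V(w)}$ to absorb the constant $1$ into the same $\sup$-term.
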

\begin{proof}
It is easy to check that $\widetilde{L}_{\alpha,V}\phi$ is locally bounded. Thus, it suffices to prove (\ref{e10}) for
$|x|$ large enough. First, for $x\in\R^d$ with $|x|\ge 2$,
\begin{equation*}
\begin{split}
\int_{\{{1}/{2}\leqslant |z| \leqslant 1\}}&\big(\phi(x+z)-\phi(x)\big)\frac{1}{|z|^{d+\alpha}}\,dz\\
&=\int_{\{{1}/{2}\leqslant |z| \leqslant 1\}}\big(e^{|x+z|}-e^{|x|}\big)\frac{1}{|z|^{d+\alpha}}\,dz\\
&\leqslant e^{|x|}\int_{\{{1}/{2}\leqslant |z| \leqslant 1\}}
\big(e^{|z|}-1\big)\frac{1}{|z|^{d+\alpha}}\,dz\\
&= c_1e^{|x|}\\
&\leqslant c_1\Big(e^{V(x)}\sup_{|x| \le |z|\le |x|+1}e^{-V(z)}\Big)\phi(x),
\end{split}
\end{equation*}where $$c_1:=\int_{\{{1}/{2}\leqslant |z| \leqslant 1\}}
\big(e^{|z|}-1\big)\frac{1}{|z|^{d+\alpha}}\,dz.$$

Second, for $x\in\R^d$ with $|x|\ge 2$,
\begin{align*}
&\int_{\{{1}/{2}\leqslant |z| \leqslant 1\}}\big(\phi(x+z)-\phi(x)\big)\frac{e^{(V(x)-V(x+z))}}{|z|^{d+\alpha}}\,dz\\
&=e^{V(x)}\bigg(\int_{\{{1}/{2}\leqslant |z| \leqslant 1\}}\big(e^{|x+z|}-e^{|x|}\big)
\frac{e^{-V(x+z)}}{|z|^{d+\alpha}}\,dz\bigg)\\
&\leqslant e^{V(x)}\bigg(\int_{\{{1}/{2}\leqslant |z| \leqslant 1,|x+z|-|x|\leqslant -{1}/{2}\}}
\big(e^{|x+z|}-e^{|x|}\big)
\frac{e^{-V(x+z)}}{|z|^{d+\alpha}}\,dz\\
&\qquad\qquad+\int_{\{{1}/{2}\leqslant |z| \leqslant 1,|x+z|-|x|\geqslant 0\}}
\big(e^{|x+z|}-e^{|x|}\big)
\frac{e^{-V(x+z)}}{|z|^{d+\alpha}}\,dz
\bigg)\\
&\leqslant e^{V(x)}\bigg(\int_{\{{1}/{2}\leqslant |z| \leqslant 1,|x+z|-|x|\leqslant -{1}/{2}\}}
\big(e^{|x|-1/2}-e^{|x|}\big)
\frac{e^{-V(x+z)}}{|z|^{d+\alpha}}\,dz\\
&\qquad\qquad+\int_{\{{1}/{2}\leqslant |z| \leqslant 1,|x+z|-|x|\geqslant 0\}}
\big(e^{|x|+|z|}-e^{|x|}\big)
\frac{e^{-V(x+z)}}{|z|^{d+\alpha}}\,dz
\bigg)\\
&=e^{V(x)}e^{|x|}
\bigg(-\int_{\{{1}/{2}\leqslant |z| \leqslant 1,|x+z|-|x|\leqslant -{1}/{2}\}}
(1-e^{-{1}/{2}})
\frac{e^{-V(x+z)}}{|z|^{d+\alpha}}\,dz\\
&\qquad\qquad\qquad+\int_{\{{1}/{2}\leqslant |z| \leqslant 1,|x+z|-|x|\geqslant 0\}}(e^{|z|}-1)
\frac{e^{-V(x+z)}}{|z|^{d+\alpha}}\,dz
\bigg)\\
&\leqslant e^{V(x)}e^{|x|}\Bigg[-(1-e^{-{1}/{2}})
\Big(\inf_{|x|-1 \le |z|\leqslant |x|-{1}/{2}}e^{-V(z)}\Big)
\int_{\{{1}/{2}\leqslant |z| \leqslant 1,|x+z|-|x|\leqslant -{1}/{2}\}}\frac{1}{|z|^{d+\alpha}}\,dz\\
&\qquad\qquad\qquad+\Big(\sup_{|x|\le |z|\leqslant |x|+1}e^{-V(z)}\Big)
\int_{\{{1}/{2}\leqslant |z| \leqslant
1\}}(e^{|z|}-1)\frac{1}{|z|^{d+\alpha}}\,dz\Bigg],
\end{align*}
where in the first inequality we have removed the subset
$\{z\in\R^d: 1/2\le |z|\le 1,-1/2< |x+z|-|x|< 0\}$ in the domain of
integral, since the integrand is negative on this subset. For
$x\in\R^d$ with $|x|\ge 2$, let $z_0=-{3x}/({4|x|})$. Then,
$$|z_0|=\frac{3}{4}\quad\textrm{ and
}\,\,|x+z_0|-|x|=-\frac{3}{4}.$$  Hence, for every $z \in
B(z_0,\frac{1}{4})$,
\begin{equation*}
\begin{split}
& |z|\geqslant |z_0|-\frac{1}{4}\geqslant \frac{1}{2} , \quad |z|\leqslant |z_0|+\frac{1}{4}=1,\\
& |x+z|-|x|\leqslant (|x+z_0|-|x|)+(|x+z|-|x+z_0|)\leqslant
-\frac{3}{4}+|z-z_0|\leqslant -\frac{1}{2},
\end{split}
\end{equation*}
which implies that
\begin{equation*}
B\Big(z_0,\frac{1}{4}\Big)\subseteq \Big\{z \in \R^d:\
\frac{1}{2}\leqslant |z| \leqslant 1,|x+z|-|x|\leqslant
-\frac{1}{2}\Big\}.
\end{equation*}
According to both conclusions above, we get that
\begin{equation*}
\begin{split}
&\int_{\{{1}/{2}\leqslant |z| \leqslant 1\}}\big(\phi(x+z)-\phi(x)\big)\frac{e^{(V(x)-V(x+z))}}{|z|^{d+\alpha}}dz\\
& \leqslant e^{V(x)}e^{|x|}\bigg[-(1-e^{-{1}/{2}})
\Big(\inf_{|x|-1 \le |z|\leqslant |x|-{1}/{2}}e^{-V(z)}\Big)
m\Big(B\Big(z_0,\frac{1}{4}\Big)\Big)\\
&\qquad\qquad\qquad+\Big(\sup_{|x| \le |z|\le |x|+1}e^{-V(z)}\Big)
\int_{\{{1}/{2}\leqslant |z| \leqslant 1\}}(e^{|z|}-1)\frac{1}{|z|^{d+\alpha}}\,dz\bigg]\\
&\leqslant -c_2\phi(x)e^{V(x)}\Big(\inf_{|x|-1 \le |z|\leqslant |x|-{1}/{2}}e^{-V(z)}\Big)
+c_1\phi(x)e^{V(x)}\Big(\sup_{|x|\le |z|\le |x|+1}e^{-V(z)}\Big),
\end{split}
\end{equation*} where $m(A)$ is Lebesgue measure for the Borel measurable set $A$, and
$$c_2:=(1-e^{-{1}/{2}})m\Big(B\Big(0,\frac{1}{4}\Big)\Big).$$

Combining both estimates above with \eqref{e8}, we know that for
any $x\in\R^d$ with $|x|\ge2$, it holds that
$$\widetilde{L}_{\alpha,V}\phi(x)\le \frac{1}{2}\bigg[-c_2\Big(\inf_{|x|-1 \le |z|\leqslant |x|-{1}/{2}}e^{-V(z)}\Big)
+2c_1\Big(\sup_{|x| \le |z|\le |x|+1}e^{-V(z)}\Big)\bigg]\phi(x)e^{V(x)}.$$

Therefore, if $$\liminf_{|x|\rightarrow \infty}\frac{\inf_{|x|-1 \le |z|\leqslant |x|-{1}/{2}}e^{-V(z)}}
{\sup_{|x| \le |z|\le |x|+1}e^{-V(z)}}>\frac{2c_1}{c_2},$$ then for $|x|$ large enough,
\begin{equation*}
\widetilde{L}_{\alpha,V}\phi(x)\le -C_1\phi(x)e^{V(x)}\Big(\inf_{|x|-1 \le |z|\leqslant |x|-{1}/{2}}e^{-V(z)}\Big)
\end{equation*} holds with some constant $C_1>0$. The required assertion follows from the fact that
$$\frac{2c_1}{c_2}=\frac{2^{2d+1}d}{1-e^{-1/2}}\int_{1/2}^1(e^r-1)r^{-1-\alpha}\,dr<
\frac{1}{\alpha}2^{2d+1}(e+e^{{1}/{2}})(2^{\alpha}-1)$$ and \eqref{e9}.
\end{proof}

\bigskip

Now we present the proof of Theorem \ref{th3.1}.
\begin{proof}[Proof of Theorem \ref{th3.1}]
The proof is the same as that of \cite[Theorem 2.10]{CGWW} and
\cite[Theorem 3.6]{CW} (see also \cite[Theorem 1.1]{WW}), and it is based on Lemma \ref{le3.3} and
the local (super) Poincar\'{e} inequality for
$\mathscr{E}_{\alpha,V}$. Here, we only show the super Poincar\'{e}
inequality (\ref{e13}).  Based on the local Poincar\'{e} inequality in
Proposition \ref{pr2.2},
the proof for the Poincar\'{e} inequality
(\ref{e12}) is similar and even simpler.

According to Lemma \ref{le3.3}, there are constants
$c_1$, $c_2$ and $r_0>1$ such that
\begin{equation*}
\widetilde{L}_{\alpha,V}\phi(x)\leqslant -c_1
\phi(x)e^{V(x)}\Big({\inf_{|x|-1 \le |z|\leqslant
|x|-{1}/{2}}e^{-V(z)}}\Big)+c_2\I_{B(0,r_0)}(x),
\end{equation*}
where $\phi(x)$ is the function given in Lemma \ref{le3.3}.

For
$r>0$, set $$\Phi(r)=\inf_{|x|\ge
r}\Big[e^{V(x)}\Big({\inf_{|x|-1 \le |z|\leqslant
|x|-{1}/{2}}e^{-V(z)}}\Big)\Big].$$ By Lemma \ref{le3.2}, for any
$f \in C_c^{\infty}(\R^d)$ and $r\ge r_0$,
\begin{equation}\label{e14ooo}
\begin{split}
\int_{B(0,r)^c} f^2(x)\,\mu_V(dx)&\leqslant \frac{1}{\Phi(r)}\int f^2(x)\Phi(|x|)\,\mu_V(dx)\\
&\leqslant \frac{1}{\Phi(r)}\int f^2(x)e^{V(x)}\Big({\inf_{|x|-1 \le |z|\leqslant
|x|-{1}/{2}}e^{-V(z)}}\Big)\,\mu_V(dx)\\
&\leqslant -\frac{1}{c_1\Phi(r)}\int \frac{\widetilde{L}_{\alpha,V}\phi(x)}{\phi(x)}f^2(x)\,\mu_V(dx)\\
&\qquad+\frac{c_2}{c_1\Phi(r)}\int_{B(0,r_0)} \frac{f^2(x)}{\phi(x)}\,\mu_V(dx)\\
&\leqslant
\frac{c_3}{\Phi(r)}\bigg[\widetilde{\mathscr{E}}_{\alpha,V}(f,f)+
\int_{B(0,r_0)} f^2(x)\mu_V(dx)\bigg]\\
&\leqslant
\frac{c_3}{\Phi(r)}\bigg[\widetilde{{\mathscr{E}}}_{\alpha,V}(f,f)+
\int_{B(0,r)} f^2(x)\mu_V(dx)\bigg],
\end{split}
\end{equation}
where in the forth inequality we have used the fact that $\phi>1$.

For every $f\in C_b^{\infty}(\R^d)$, there is a sequence of
functions $\{f_n\}_{n=1}^{\infty}\subseteq$ $C_c^{\infty}(\R^d)$
such that
\begin{equation*}
\lim_{n\rightarrow \infty}f_n(x)=f(x),\ \
\sup_n\|f_n\|_{\infty}<\infty,\ \ \sup_n\|\nabla
f_n\|_{\infty}<\infty.
\end{equation*}
Thus, by the dominated convergence
theorem, we get
\begin{equation*}
\lim_{n\rightarrow
\infty}\widetilde{{\mathscr{E}}}_{\alpha,V}(f_n,f_n)=
\widetilde{{\mathscr{E}}}_{\alpha,V}(f,f),
\end{equation*}
$$
\lim_{n\to\infty}\int_{B(0,r)^c} f_n^2(x)\,\mu_V(dx)
=\int_{B(0,r)^c} f^2(x)\,\mu_V(dx),$$ and $$
\lim_{n\to\infty}\int_{B(0,r)} f_n^2(x)\,\mu_V(dx) =\int_{B(0,r)}
f^2(x)\,\mu_V(dx).
$$
Since (\ref{e14ooo}) holds for each $f_n$, letting $n$ tend to
infinity and using the estimates above, we show that
(\ref{e14ooo}) holds for $f\in C_b^{\infty}(\R^d)$.

Hence, for every $r\ge r_0$ and $f \in C_b^{\infty}(\R^d)$,
\begin{equation*}
\begin{split}
\int f^2(x)\,\mu_V(dx)&=\int_{B(0,r)} f^2(x)\,\mu_V(dx)+\int_{B(0,r)^c} f^2(x)\,\mu_V(dx)\\
&\leqslant \frac{c_3}{\Phi(r)}\mathscr{E}_{\alpha,V}(f,f)+
\Big(1+\frac{c_3}{\Phi(r)}\Big)\int_{B(0,r)} f^2(x)\,\mu_V(dx),
\end{split}
\end{equation*}
where in the inequality above we have used the fact that $\widetilde{\mathscr{E}}_{\alpha,V}(f,f) \leqslant \mathscr{E}_{\alpha,V}(f,f)$ for any $f\in C_b^\infty(\R^d).$

\medskip

Applying the local super Poincar\'{e} inequality (\ref{pr2.1.1}) into
the inequality above, we can obtain that for any $r\ge r_0$ and $f\in C_b^\infty(\R^d)$,
\begin{equation*}
\begin{split}
 \int f^2(x)\,\mu_V(dx)\leqslant& \Big(\frac{c_3}{\Phi(r)}+\frac{s}{2} \Big)\mathscr{E}_{\alpha,V}(f,f)\\
&+c_4\big(1+s^{-{d}/{\alpha}}\big)\frac{r^{d+{d^2}/{\alpha}}K(r)^{1+{d}/{\alpha}}}{k(r)^{2+{d}/{\alpha}}}
\Big(\int |f(x)|\,\mu_V(dx)\Big)^2,
\end{split}
\end{equation*} where we have used the fact that $\sup_{r\ge
r_0}\Phi(r)^{-1}<\infty$, thanks to (\ref{e12aa}).

If (\ref{e12aa}) holds, then $\lim_{r \rightarrow
\infty}\Phi(r)=\infty$.  By taking $r=\Phi^{-1}({2c_3}/{s})$ in the
estimate above, the required inequality (\ref{e13}) follows.
\end{proof}

To close this part, we present the following weighted Poincar\'{e}
inequality for $\mathscr{E}_{\alpha,V}$. The proof is similar to
that of \cite[Theorem 3.6]{CW}, and it is based on the local
Poincar\'{e} inequality \eqref{pr2.2.1} and Lemma \ref{le3.3}. We
omit the details here.

\begin{proposition}\label{pro-3.4}Under \eqref{th3.1.1}, there exists a constant $C_1>0$ such that
$$\int f^2(x)\Big(e^{V(x)}\inf_{|x|-1 \le |z|\le |x|-1/2}e^{-V(z)}\Big)\,\mu_V(dx)\le C_1\mathscr{E}_{\alpha,V}(f,f)$$ holds for all $f\in C_b^\infty(\R^d)$ with $ \mu_V(f)=0.$\end{proposition}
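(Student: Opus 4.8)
\medskip

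The plan is to feed the Lyapunov bound of Lemma~\ref{le3.3} together with the local Poincar\'e inequality \eqref{pr2.2.1} into the scheme of \cite[Theorem 3.6]{CW}; the one genuinely new point is that the localisation term produced by the Lyapunov estimate will be absorbed by the weight itself. Write $w(x):=e^{V(x)}\inf_{|x|-1\le |z|\le |x|-1/2}e^{-V(z)}$, so that $\Phi(r)=\inf_{|x|\ge r}w(x)$ is nondecreasing in $r$ and, by \eqref{th3.1.1}, $\lim_{r\to\infty}\Phi(r)=\liminf_{|x|\to\infty}w(x)>0$; fix $\kappa>0$ and $R>1$ with $\Phi(r)\ge\kappa$ for every $r\ge R$. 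Note that no mean-zero assumption will be needed until the localisation step.

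\emph{Step 1 (from the Lyapunov condition).} Let $\phi$ be the function of Lemma~\ref{le3.3} and let $c_1,c_2,r_0$ denote the constants of \eqref{e10}, where we may assume $r_0>1$. Dividing \eqref{e10} by $\phi$ and using $\phi>1$ gives the pointwise bound $-\widetilde{L}_{\alpha,V}\phi/\phi\ge c_1 w-c_2\I_{B(0,r_0)}$; testing it against $f^2$ and invoking Lemma~\ref{le3.2} yields, for all $f\in C_c^\infty(\R^d)$,
\begin{equation*}
\int f^2 w\,d\mu_V\le \frac{1}{c_1}\widetilde{\mathscr{E}}_{\alpha,V}(f,f)+\frac{c_2}{c_1}\int_{B(0,r_0)}f^2\,d\mu_V\le \frac{1}{c_1}\mathscr{E}_{\alpha,V}(f,f)+\frac{c_2}{c_1}\int_{B(0,r_0)}f^2\,d\mu_V.
\end{equation*}
Approximating $f\in C_b^\infty(\R^d)$ by $f_n:=\chi_n f\in C_c^\infty(\R^d)$ with standard cut-offs $\chi_n$, the right-hand side converges by dominated convergence (as in the proof of Theorem~\ref{th3.1}), while on the left-hand side I apply Fatou's lemma; since $\mathscr{E}_{\alpha,V}(f,f)<\infty$ for $f\in C_b^\infty(\R^d)$ and $\mu_V$ is a probability measure, the inequality persists for all such $f$ and, in particular, $\int f^2 w\,d\mu_V<\infty$.

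\emph{Step 2 (localising via the mean-zero condition).} Take $f\in C_b^\infty(\R^d)$ with $\mu_V(f)=0$ and $r\ge\max\{r_0,R\}$. Putting $a_r:=\mu_V(B(0,r))^{-1}\int_{B(0,r)}f\,d\mu_V$ and using the identity $\int_{B(0,r)}f^2\,d\mu_V=\int_{B(0,r)}(f-a_r)^2\,d\mu_V+\mu_V(B(0,r))^{-1}\big(\int_{B(0,r)}f\,d\mu_V\big)^2$, I bound the first summand by \eqref{pr2.2.1} and, for the second, use $\int_{B(0,r)}f\,d\mu_V=-\int_{B(0,r)^c}f\,d\mu_V$, the Cauchy--Schwarz inequality, and $\int_{B(0,r)^c}f^2\,d\mu_V\le\Phi(r)^{-1}\int f^2 w\,d\mu_V\le\kappa^{-1}\int f^2 w\,d\mu_V$. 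With a constant $c_3>0$ independent of $r$ this gives
\begin{equation*}
\int_{B(0,r_0)}f^2\,d\mu_V\le\int_{B(0,r)}f^2\,d\mu_V\le\frac{c_3 K(r)r^{3d}}{k(r)}\,\mathscr{E}_{\alpha,V}(f,f)+\frac{\mu_V(B(0,r)^c)}{\kappa\,\mu_V(B(0,r))}\int f^2 w\,d\mu_V.
\end{equation*}

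\emph{Step 3 (absorption).} Inserting this into the inequality of Step~1 and using $\mu_V(B(0,r)^c)\to0$, $\mu_V(B(0,r))\to1$ as $r\to\infty$, I fix $r\ge\max\{r_0,R\}$ so large that $\frac{c_2}{c_1}\cdot\frac{\mu_V(B(0,r)^c)}{\kappa\,\mu_V(B(0,r))}\le\frac12$; the finite quantity $\int f^2 w\,d\mu_V$ may then be moved to the left, leaving
\begin{equation*}
\int f^2 w\,d\mu_V\le\frac{2}{c_1}\Big(1+\frac{c_2 c_3 K(r)r^{3d}}{k(r)}\Big)\mathscr{E}_{\alpha,V}(f,f)
\end{equation*}
for all $f\in C_b^\infty(\R^d)$ with $\mu_V(f)=0$ (the right-hand constant is now genuinely fixed), which is the claim. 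The main obstacle is precisely the absorption in Step~3: it is available only because \eqref{th3.1.1} forces $\Phi(r)$ to stay bounded away from $0$, so that the tail $\int_{B(0,r)^c}f^2\,d\mu_V$ can be controlled, through the weight $w$, by an arbitrarily small multiple of the quantity being estimated.
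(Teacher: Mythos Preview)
Your proof is correct and follows essentially the same scheme the paper indicates (Lemma~\ref{le3.3} plus the local Poincar\'e inequality \eqref{pr2.2.1}, in the style of \cite[Theorem~3.6]{CW}); the paper itself omits the details. The only minor variation is in the absorption step: in the analogous written-out argument for $\mathscr{D}_{\alpha,V}$ (proof of Theorem~\ref{th3.2}\,(1)) the paper applies the weighted Cauchy--Schwarz inequality $\big(\int_{B(0,r)^c}f\,d\mu_V\big)^2\le\big(\int_{B(0,r)^c}w^{-1}\,d\mu_V\big)\int f^2 w\,d\mu_V$ and sends the first factor to zero, whereas you use the unweighted Cauchy--Schwarz bound $\big(\int_{B(0,r)^c}f\,d\mu_V\big)^2\le\mu_V(B(0,r)^c)\int_{B(0,r)^c}f^2\,d\mu_V$ together with $\int_{B(0,r)^c}f^2\,d\mu_V\le\kappa^{-1}\int f^2 w\,d\mu_V$. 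Both are valid here; your version has the slight advantage of only needing $\liminf_{|x|\to\infty}w(x)>0$ (which you correctly deduce from \eqref{th3.1.1} via $e^{-V(x)}\le\sup_{|x|\le|z|\le|x|+1}e^{-V(z)}$) rather than finiteness of $\int w^{-1}\,d\mu_V$.
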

\subsection{Concentration of Measure about Functional Inequalities for $\mathscr{E}_{\alpha,V}$}\label{section3.2}
Recall that $V$ is a locally bounded measurable function on $\R^d$ such
that $e^{-V}\in L^1(dx)$, and $\mu_V(dx)=C_Ve^{-V(x)}\,dx$ is a
probability measure.

\begin{proposition}\label{prop1} $(1)$ Suppose that there exists a constant $C_1>0$ such that the Poincar\'{e} inequality holds
\begin{equation*}\mu_V(f^2)\leqslant C_1
\mathscr{E}_{\alpha,V}(f,f),\quad f \in C_b^{\infty}(\R^d), \mu_V(f)=0.
\end{equation*}
Then there exists a constant $\lambda_0>0$ such that
$$
\int e^{\lambda_0|x|}\,\mu_V(dx)<\infty.
$$

$(2)$ Assume that the following super Poincar\'{e} inequality holds
\begin{equation*}\mu_V(f^2)\le s \mathscr{E}_{\alpha,V}(f,f)+\beta(s) \mu_V(|f|)^2,\quad s>0,\,\,f \in C_b^{\infty}(\R^d),\end{equation*} where $\beta:(0,\infty)\to (0,\infty)$ is a decreasing function. Then, for any $\lambda>0$,
$$\int e^{\lambda|x|}\,\mu_V(dx)<\infty.$$

Furthermore, for each $r>0$, define
\begin{equation*}\label{prop1.4}
F(r):=\int_1^{\infty}e^{r\lambda}h(\lambda)\,d\lambda,
\end{equation*} where for every $\lambda>1$,
\begin{equation*}
h(\lambda):=\exp\bigg\{-(1+c_0)\lambda-
\lambda \int_1^{\lambda}\frac{1}{s^2}\log\bigg[2\beta\Big(
\frac{1}{ c_1s^2e^{2s}}\Big)\bigg]ds\bigg\},
\end{equation*} and
\begin{equation*}
c_0:=\log\bigg(\int e^{|x|}\,\mu_V(dx)\bigg),
\end{equation*}
$$c_1:=\int_{\{|z|\leqslant 1\}}\frac{dz}{|z|^{d+\alpha-2}}.$$
Then
$$
\int F(|x|)\mu_V(dx)<\infty.
$$
\end{proposition}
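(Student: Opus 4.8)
The plan is to derive all three finiteness statements by Herbst-type arguments: insert exponential test functions into the (super) Poincar\'e inequality. The only place the finite-range structure enters is the following elementary bound on the form. For $\lambda>0$ and $f=e^{\lambda(|x|\wedge n)}$ (or, more generally, $e^{\lambda g}$ with $g$ bounded and $1$-Lipschitz), only pairs with $|x-y|\le1$ contribute, so $\bigl||x|-|y|\bigr|\le|x-y|\le1$, and the mean value theorem gives $(f(x)-f(y))^2\le\lambda^2e^{2\lambda}f(x)^2|x-y|^2$; carrying out the inner integral in $y$ yields
\[
\mathscr{E}_{\alpha,V}(f,f)\ \le\ \tfrac12\,c_1\lambda^2e^{2\lambda}\,\mu_V(f^2),\qquad c_1=\int_{\{|z|\le1\}}|z|^{2-d-\alpha}\,dz<\infty,
\]
the integral converging precisely because $\alpha<2$; the analogous estimate with exponent $\lambda/2$ reads $\mathscr{E}_{\alpha,V}(e^{\lambda|x|/2},e^{\lambda|x|/2})\le\tfrac18 c_1\lambda^2e^{\lambda}\mu_V(e^{\lambda|x|})$.

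For part $(1)$ I would feed $f_n=e^{\lambda(|x|\wedge n)/2}$ (bounded, hence admissible after mollification) into the Poincar\'e inequality; combined with the form bound this gives, for $\lambda$ so small that $\theta(\lambda):=\tfrac18 C_1c_1\lambda^2e^{\lambda}<1$, the self-improving inequality $\mu_V\bigl(e^{\lambda(|x|\wedge n)}\bigr)\le(1-\theta(\lambda))^{-1}\mu_V\bigl(e^{\lambda(|x|\wedge n)/2}\bigr)^2$. Iterating along the dyadic scale $\lambda,\lambda/2,\dots$, using $\mu_V(e^{t(|x|\wedge n)})^{1/t}\to e^{\mu_V(|x|\wedge n)}$ as $t\to0$ together with summability of $\sum_k2^k\theta(\lambda/2^k)$, produces $\mu_V\bigl(e^{\lambda(|x|\wedge n)}\bigr)\le\exp\bigl(C\lambda^2+\lambda\,\mu_V(|x|\wedge n)\bigr)$. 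It then remains to bound $\mu_V(|x|\wedge n)$ uniformly in $n$: Poincar\'e applied to $|x|\wedge n$ gives $\var_{\mu_V}(|x|\wedge n)\le C_1\mathscr{E}_{\alpha,V}(|x|\wedge n,|x|\wedge n)\le\tfrac12 C_1c_1$, and were $\mu_V(|x|\wedge n)$ unbounded, Chebyshev's inequality would force $\mu_V\bigl(\{|x|>\tfrac12\mu_V(|x|\wedge n)\}\bigr)\to1$, contradicting that $\mu_V$ is a probability measure. Letting $n\to\infty$ by monotone convergence gives $\mu_V(e^{\lambda_0|x|})<\infty$.

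For the first assertion of part $(2)$ I would run the same computation with the super Poincar\'e inequality and a \emph{free} parameter $s$: applied to $e^{\lambda(|x|\wedge M)}$ with $s=(c_1\lambda^2e^{2\lambda})^{-1}$ chosen to absorb the form term, it yields, after $M\to\infty$, the recursion $G(2\lambda)\le2\beta\bigl((c_1\lambda^2e^{2\lambda})^{-1}\bigr)G(\lambda)^2$, where $G(\lambda):=\mu_V(e^{\lambda|x|})$ and the recursion holds at every $\lambda$ with $G(\lambda)<\infty$; since part $(1)$ gives $G(\lambda_0)<\infty$, finiteness propagates to $2\lambda_0,4\lambda_0,\dots$, hence to all $\lambda>0$, so $c_0=\log G(1)$ is finite. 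For the second assertion set $b(s):=\log\bigl[2\beta\bigl((c_1s^2e^{2s})^{-1}\bigr)\bigr]$; taking $f\equiv1$ shows $\beta\ge1$, so $b>0$, and $b$ is increasing because $s\mapsto(c_1s^2e^{2s})^{-1}$ and $\beta$ are both decreasing, so the recursion reads $g(2\lambda)\le b(\lambda)+2g(\lambda)$ with $g=\log G\ge0$. Iterating it downward from a fixed $\lambda\ge1$ along $\lambda_i=\lambda/2^i$ until $\lambda_m\in[1,2)$ gives $g(\lambda)\le\sum_{i=1}^m2^{i-1}b(\lambda_i)+2^mg(\lambda_m)$; since $2^{i-1}b(\lambda_i)\le\lambda\int_{\lambda_i}^{2\lambda_i}b(s)s^{-2}\,ds$ by monotonicity of $b$ and $\lambda_m\ge1$, the sum is at most $\lambda\int_1^\lambda b(s)s^{-2}\,ds$, while $2^mg(\lambda_m)\le\tfrac12 g(2)\lambda$ by H\"older's inequality on $[1,2]$. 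Writing $c:=\tfrac12 g(2)-c_0\ (\ge0)$ this yields
\[
g(\lambda)\ \le\ c_0\lambda\ +\ \lambda\int_1^{\lambda}\frac{b(s)}{s^2}\,ds\ +\ c\lambda ,
\]
so that $h(\lambda)G(\lambda)=\exp\{g(\lambda)-(1+c_0)\lambda-\lambda\int_1^\lambda b(s)s^{-2}ds\}\le e^{(c-1)\lambda}$ is integrable over $[1,\infty)$, the factor $1$ in $-(1+c_0)\lambda$ being exactly the margin that supplies this. Then Tonelli's theorem closes the argument: $\int F(|x|)\,\mu_V(dx)=\int_1^\infty h(\lambda)\bigl(\int e^{\lambda|x|}\mu_V(dx)\bigr)d\lambda=\int_1^\infty h(\lambda)G(\lambda)\,d\lambda<\infty$.

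The routine parts are the form bound and the two Herbst iterations; the step needing genuine care is converting the discrete doubling recursion for $G$ into the displayed continuous estimate for $g$ — matching the dyadic sum to the integral $\lambda\int_1^\lambda b(s)s^{-2}ds$ and, above all, controlling the linear error from the finitely many boundary scales in $[1,2]$ tightly enough that it is absorbed by the $-\lambda$ engineered into $h$, so that $h(\lambda)G(\lambda)$ is integrable at infinity (this is precisely why $h$ carries $-(1+c_0)\lambda$ rather than $-c_0\lambda$). A secondary but necessary point is the uniform bound $\sup_n\mu_V(|x|\wedge n)<\infty$ in part $(1)$, obtained as above from the variance bound and Chebyshev without presupposing $|x|\in L^1(\mu_V)$.
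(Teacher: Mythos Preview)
Your form bound and the overall Herbst strategy are correct, but the proposal contains a genuine gap in the second assertion of part~(2), and a minor one in the first assertion.

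\textbf{The genuine gap.} In the last step you conclude that $h(\lambda)G(\lambda)\le e^{(c-1)\lambda}$ is integrable, where $c=\tfrac12 g(2)-c_0$. Integrability requires $c<1$, which you never verify and which is \emph{false in general}: from the recursion itself $g(2)\le b(1)+2g(1)$, so $c\le\tfrac12 b(1)=\tfrac12\log\!\big[2\beta\big((c_1e^{2})^{-1}\big)\big]$, and this can be arbitrarily large because $\beta$ near the origin is unconstrained. The ``margin $1$'' built into $h$ is therefore not enough to absorb your boundary error as written. The paper avoids this by a differential inequality: starting from $l'(\lambda)=\mu_V(|x|e^{\lambda|x|})$ and Young's inequality $st\le s\log s-s+e^{t}$, one gets $l'(\lambda)\le \varepsilon\, l(2\lambda)-\lambda^{-1}\log(\varepsilon\lambda e)\,l(\lambda)$; combining with $l(2\lambda)\le 2\beta\big((c_1\lambda^2e^{2\lambda})^{-1}\big)l(\lambda)^2$ and optimising in $\varepsilon$ yields
\[
\frac{d}{d\lambda}\Big(\frac{\log l(\lambda)}{\lambda}\Big)\ \le\ \frac{1}{\lambda^2}\,b(\lambda),
\]
which integrates to $g(\lambda)\le c_0\lambda+\lambda\int_1^\lambda b(s)s^{-2}\,ds$ exactly, hence $h(\lambda)l(\lambda)\le e^{-\lambda}$ with no extraneous linear term. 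Your discrete scheme can in fact be repaired to give the same bound: instead of the crude $g(\lambda_m)\le(\lambda_m/2)g(2)$, interpolate $g(\lambda_m)\le(2-\lambda_m)g(1)+(\lambda_m-1)g(2)$ and feed in $g(2)\le b(1)+2g(1)$; this produces $2^{m}g(\lambda_m)\le \lambda c_0+\lambda(1-\lambda_m^{-1})b(1)$, and the extra term is absorbed by the slack $\lambda\int_1^{\lambda_m}b(s)s^{-2}\,ds\ge\lambda(1-\lambda_m^{-1})b(1)$ that you threw away when enlarging $\int_{\lambda_m}^{\lambda}$ to $\int_1^{\lambda}$.

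\textbf{The minor gap.} In the first assertion of part~(2) you invoke part~(1) to get $G(\lambda_0)<\infty$, but part~(1) assumes the Poincar\'e inequality, which you have not deduced from the super Poincar\'e inequality (and the naive route $\mu_V(|f|)^2\le\mu_V(f^2)$ fails since $\beta\ge1$). Either cite the standard spectral fact that super Poincar\'e implies Poincar\'e, or---simpler and what the paper does---bypass part~(1) altogether: the same tail split $l_n(\lambda/2)^2\le 2e^{2\lambda R}+2\mu_V(|x|>R)\,l_n(\lambda)$ works directly under super Poincar\'e, by first choosing $s$ so that $\tfrac12 c_1s\lambda^2e^{2\lambda}<\tfrac12$ and then $R$ so that $\beta(s)\mu_V(|x|>R)<\tfrac18$.

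\textbf{Part (1) itself} is correct but takes a longer route than the paper. The paper does not iterate dyadically: it simply writes $l_n(\lambda/2)^2\le 2e^{2\lambda R}+2\mu_V(|x|>R)\,l_n(\lambda)$ by Cauchy--Schwarz, fixes $R_0$ with $\mu_V(|x|>R_0)<\tfrac18$, then picks $\lambda_0$ small so that $\tfrac12 C_1c_1\lambda_0^2e^{2\lambda_0}<\tfrac12$, and reads off $l_n(\lambda_0)\le 4e^{2\lambda_0R_0}$ uniformly in $n$. Your Aida--Stroock style iteration plus the Chebyshev argument for $\sup_n\mu_V(|x|\wedge n)$ also works, but is heavier machinery for the same conclusion.
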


\begin{proof} (1) For any $n\ge1$, define $g_n(x):=e^{\lambda (|x|\wedge n)}$, where
$\lambda>0$ is a constant to be determined later. Clearly, $g_n$ is
a Lipschitz continuous bounded function. By the approximation
procedure in the proof of Theorem \ref{th3.1}, we can apply the
function $g_n$ into the Poincar\'{e} inequality. Thus,
\begin{equation*}
\aligned \int g_n^2(x)\,\mu_V(dx)&\leqslant
\frac{C_1}{2}\iint_{\{|x-y|\le 1\}}
\frac{(g_n(x)-g_n(y))^2}{|x-y|^{d+\alpha}}\,dy
\,\mu_V(dx)\\
&\qquad\qquad \qquad\qquad+\Big(\int
g_n(x)\,\mu_V(dx)\Big)^2.\endaligned
\end{equation*}

By the mean value theorem and the fact that for any $x$,
$y\in\R^d$, $n\ge1$, $$\big||x|\wedge n -|y|\wedge n\big|\leqslant
|x-y|,$$ we know that for any $x\in\R^d$,
\begin{equation*}
\begin{split}
\int_{\{|x-y|\le 1\}} \frac{(g_n(x)-g_n(y))^2}{|x-y|^{d+\alpha}}\,dy
=& \int_{\{|x-y|\leqslant 1\}}
\frac{(e^{\lambda (|x|\wedge n)}-e^{\lambda (|y|\wedge n)})^2}{|x-y|^{d+\alpha}}\,dy\\
 \leqslant&  \lambda^2 e^{2\big(\lambda (|x|\wedge
n)+\lambda \big)}
\int_{\{|x-y|\leqslant 1\}}\frac{|x-y|^2}{|x-y|^{d+\alpha}}\,dy\\
\leqslant & c_1\lambda^2 e^{2\big(\lambda (|x|\wedge n)+\lambda \big)}\\
 =&c_1 \lambda^2 e^{2\lambda }\, e^{2\lambda (|x|\wedge n)},
\end{split}
\end{equation*}
where $$c_1:=\int_{\{|z|\leqslant
1\}}\frac{dz}{|z|^{d+\alpha-2}}=\frac{d\pi^{d/2}}{(2-\alpha)\Gamma(d/2+1)}
.$$ Therefore,
\begin{equation}\label{e5a}
\begin{split}
\iint_{\{|x-y|\le 1\}}
\frac{(g_n(x)-g_n(y))^2}{|x-y|^{d+\alpha}}\,dy\,\mu_V(dx) \leqslant&
c_1 \lambda^2 e^{2\lambda }\,\int e^{2\lambda (|x|\wedge
n)}\,\mu_V(dx).
\end{split}
\end{equation}

For any $n\ge 1$ and $\lambda>0$, set $$l_n(\lambda):=\int
g_n^2(x)\,\mu_V(dx)=\int e^{2\lambda (|x|\wedge n)}\,\mu_V(dx).$$
Then, combining all the estimates above,
for each $\lambda>0$,
\begin{equation*}
\begin{split}
& l_n(\lambda)\leqslant \frac{C_1}{2}c_1\lambda^2 e^{2\lambda
}l_n(\lambda)+l_n^2({\lambda}/{2}).
\end{split}
\end{equation*}
Furthermore, using the Cauchy-Schwarz inequality, for any $R>0$, we
have
\begin{equation*}\label{proprop1.3.2}
\begin{split}
l_n^2({\lambda}/{2})&\leqslant \Big(e^{\lambda R }+
\int_{\{|x|>R\}}e^{\lambda (|x|\wedge n)}\,\mu_V(dx)\Big)^2\leqslant
2e^{2\lambda R}+2p(R)\,l_n(\lambda),
\end{split}
\end{equation*}
where $p(R):=\mu_V(|x|>R)$. Therefore, for each $R>0$ and
$\lambda>0$,
\begin{equation*}\label{proprop1.3.3}
l_n(\lambda)\leqslant \Big(\frac{C_1}{2}c_1\lambda^2 e^{2\lambda
}+2p(R)\Big)l_n(\lambda)+2e^{2\lambda R},
\end{equation*}

Now, we fix $R_0>0$ large enough such that $p(R_0)<{1}/{8}$, and
then take $\lambda_0>0$ small enough such that $C_1c_1\lambda_0^2
e^{2\lambda_0}<{1}/{2}$. Then, we arrive at
\begin{equation*}
l_n(\lambda_0)\leqslant 4e^{2\lambda_0 R_0}.
\end{equation*}
Letting $n \rightarrow \infty$, we obtain the first desired assertion.

(2) We still use the same test function $g_n$ as that in part (1). By applying this test
function $g_n$ into the super Poincar\'{e} inequality and by using
(\ref{e5a}), we have
\begin{equation}\label{e6a}
\aligned \int g_n^2(x)\,\mu_V(dx)&\leqslant
\frac{c_1}{2}\lambda^2 e^{2\lambda}s\int g_n^2(x)\,\mu_V(dx)\\
&\qquad\qquad \qquad\qquad+\beta(s)\Big(\int
g_n(x)\,\mu_V(dx)\Big)^2,\quad s>0.\endaligned
\end{equation}
Following the argument in the proof of part (1), we can get that for any
$\lambda$, $s$ and $R>0$,
$$l_n(\lambda)\le \frac{c_1}{2}s\lambda^2 e^{2\lambda}l_n(\lambda)+\beta(s)\Big[2e^{2\lambda R}+2p(R)l_n(\lambda)\Big],$$
where $l_n(\lambda)$ and $p(R)$  are the same functions defined in the
proof of part (1).

Now, for any fixed $\lambda>0$, choose $s_0>0$ small enough such
that $c_1s_0\lambda^2 e^{2\lambda}<1/2$, and then take $R_0$ large
enough such that $\beta(s_0)p(R_0)<1/8$, we get
$$l_n(\lambda)\le 8\beta(s_0)e^{2\lambda R_0}.$$ Letting $n \rightarrow \infty$, we show
$\int e^{\lambda|x|}\,\mu_V(dx)<\infty$ for any $\lambda>0$.

In the remainder of this part, we will follow the method adopted in the proof of  \cite[Theorem 3.3.20]
{WBook}, see also \cite[Theorem 6.1]{W1}.
For every $\lambda>0$, set $l(\lambda):=\mu_V(e^{\lambda |x|})$. For any
$\varepsilon>0$, it holds that
\begin{equation*}
\begin{split}
l'(\lambda)&=\mu_V(|x|e^{\lambda|x|})\\
&=\mu_V\Big[\Big( \frac{1}{\lambda}(\lambda |x|+\log \varepsilon)-\frac{\log \varepsilon}{\lambda}\Big)e^{\lambda|x|}\Big]\\
&=\mu_V\Big( \frac{1}{\lambda}(\lambda |x|+\log \varepsilon)e^{\lambda|x|}\Big)-
\frac{\log \varepsilon}{\lambda} \mu_V(e^{\lambda|x|})\\
&\le \varepsilon \mu_V(e^{2\lambda |x|})-
\frac{\text{log}(\varepsilon \lambda e)}{\lambda}\mu_V(e^{\lambda |x|})\\
&= \varepsilon l(2\lambda)-
\frac{\text{log}(\varepsilon \lambda e)}{\lambda}l(\lambda),
\end{split}
\end{equation*}
where in the inequality above we have applied the Young inequality
$$st \le s\text{log}s -s+e^t,\quad s \in \R_+,\,\,  t \in \R$$ with
$s=\frac{1}{\lambda}$ and $t=\lambda |x|+\log\varepsilon$.

On the other hand, according to (\ref{e6a}) and letting $n
\rightarrow \infty$,
\begin{equation*}
l(2\lambda) \le \frac{c_1}{2}\lambda^2e^{2\lambda}s
l(2\lambda)+\beta(s)l(\lambda)^2,\quad s>0.
\end{equation*}
 Taking $s=\big( c_1 \lambda^2 e^{2\lambda}\big)^{-1}$, we obtain that
$$l(2\lambda)\le 2\beta\Big(\frac{1}{c_1 \lambda^2 e^{2\lambda}}\Big)
l(\lambda)^2.$$ Combining all the estimates above, $$l'(\lambda)\le
2\varepsilon \beta\Big(\frac{1}{ c_1 \lambda^2 e^{2\lambda}}\Big)
l(\lambda)^2- \frac{\text{log}(\varepsilon \lambda
e)}{\lambda}l(\lambda).$$

Choosing $\varepsilon= \Big(2\lambda
l(\lambda)\beta\big(\frac{1}{c_1  \lambda^2 e^{2\lambda}
}\big)\Big)^{-1}$, we derive
\begin{equation*}
l'(\lambda)\le \frac{l(\lambda)}{\lambda}\bigg[\log
l(\lambda)+\log\Big(2\beta \Big(\frac{1}{c_1 \lambda^2
e^{2\lambda}}\Big)\Big)\bigg],
\end{equation*}
hence
\begin{equation*}
\frac{d}{d\lambda}\Big( \frac{\log l(\lambda)}{\lambda}\Big)\le
\frac{1}{\lambda^2}\log\Big(2\beta \Big(\frac{1}{c_1 \lambda^2
e^{2\lambda}}\Big)\Big),
\end{equation*}
which implies that for any $\lambda\ge1,$
$$ l(\lambda)\le  \exp\bigg(\lambda \log l(1)+ \lambda \int_1^\lambda\frac{1}{s^2}\log\Big(2\beta
\Big(\frac{1}{c_1 s^2 e^{2s}}\Big)\,ds\bigg).$$ Then, by
the Fubini theorem, we have
$$
\int F(|x|)\,\mu_V(dx)=\int_1^{+\infty}\int e^{\lambda
|x|}\,\mu_V(dx)h(\lambda)\,d\lambda\le \int_1^\infty e^{-\lambda}\,d\lambda<\infty.
$$ This finishes the proof.
\end{proof}

Now, we turn to the proof of Example \ref{Ex1}.

\begin{proof}[Proof of Example \ref{Ex1}]
(1) Let $\mu_{V_\lambda}(dx)=C_{\lambda}e^{-\lambda |x|}\,dx=:C_{\lambda}e^{-V_{\lambda}(x)}\,dx$ with $\lambda>0$, we have
\begin{equation*}
\frac{\inf_{|x|-1 \le |z|\leqslant
|x|-{1}/{2}}e^{-V_{\lambda}(z)}} {\sup_{|x| \le |z|\le |x|+1}e^{-V_{\lambda}(z)}}\ge
e^{\lambda/2}, \quad x\ge1.
\end{equation*} Then, for $\lambda_0$ defined in Example \ref{Ex1} (1), if $\lambda>\lambda_0$,
\begin{equation*}
\liminf_{|x| \rightarrow \infty}\frac{\inf_{|x|-1 \le |z|\leqslant
|x|-{1}/{2}}e^{-V_{\lambda}(z)}} {\sup_{|x| \le |z|\le |x|+1}e^{-V_{\lambda}(z)}}>
\frac{1}{\alpha}2^{2d+1}(e+e^{{1}/{2}})(2^{\alpha}-1).
\end{equation*}
According to Theorem \ref{th3.1} (1), the Poincar\'{e} inequality (\ref{e12}) holds for
$\mu_{V_\lambda}(dx)$ with $\lambda>\lambda_0$.

(2) If the
super Poincar\'{e} inequality (\ref{e13}) holds for $\mu_{V_\delta}(dx)=C_{\delta}e^{- (1+|x|^{\delta})}\,dx=:C_{\delta}e^{-V_{\delta}(x)}\,dx$, then, by Proposition \ref{prop1} (2), $\int e^{\lambda |x|}\mu_{V_\delta}(dx)<\infty$
for any $\lambda>0$, which implies that the
super Poincar\'{e} inequality (\ref{e13}) holds only if $\delta>1$.

On the other hand, for every
$\delta>1$ and for $|x|$ large enough,
\begin{equation*}
e^{V_{\delta}(x)}\inf_{|z|\leqslant
|x|-{1}/{2}}e^{-V_{\delta}(z)}\ge C_1e^{C_2|x|^{\delta-1}},
\end{equation*}
where $C_1$ and $C_2$ are two positive constants independent of $x$. Hence, for $r$ large enough,
$\Phi(r)\ge C_1e^{C_2 r^{\delta-1}}$.  Therefore, according to Theorem \ref{th3.1} (2), we know that the super
Poincar\'{e} inequality (\ref{e13}) holds with the rate function $\beta$ given by
(\ref{ex1.2.1a}).

According to \cite[Theorem 3.3.14]{WBook} (also see \cite[Theorem
5.1]{W1}), if the rate function $\beta(s)$ satisfies that
\begin{equation}\label{e11aa}
\Psi(t):=\int_t^{\infty}\frac{\beta^{-1}(r)}{r}\,dr<\infty \quad\textrm{ for any }t > \inf_{s>0}\beta(s),
\end{equation}
then
\begin{equation}\label{e11a}
\|P_t^{\alpha,V_\delta}\|_{L^1(\mu_{V_\delta})\to L^\infty(\mu_{V_\delta})} \le 2\Psi^{-1}(t),\quad t>0,
\end{equation}
where $$\Psi^{-1}(t):=\inf \Big \{r \ge \inf_{s>0}\beta(s):\ \Psi(r) \ge t \Big\}.$$
It follows from (\ref{ex1.2.1a}) that $$\beta^{-1}(r)\le \exp\{-C_3(\log r +C_4)^{\frac{\delta-1}{\delta}}\}$$ holds for $r$ large enough and some positive constants $C_3$ and $C_4$. Hence, for $t$ large enough,
\begin{equation*}
\begin{split}
\Psi(t)&\le \int_t^{\infty}\frac{1}{r\exp\{C_3(\log r +C_4)^{\frac{\delta-1}{\delta}}\}}\,dr\\
&\le \int_t^{\infty}\frac{1}{r(\log r+C_4)^{\frac{1}{\delta}}\exp\{C_5(\log r +C_4)^{\frac{\delta-1}{\delta}}\}}\,dr\\
&= \frac{C_6}{\exp\{C_5(\log t +C_4)^{\frac{\delta-1}{\delta}}\}}.
\end{split}
\end{equation*}
This along with (\ref{e11a}) gives us the desired estimate for the associated semigroup $P_t^{\alpha,V_\delta}$.

Furthermore, assume that the super
Poincar\'{e} inequality (\ref{e13}) holds for $\mu_{V_\delta}$ with the rate function
$\beta(s)$ satisfying (\ref{ex1.2.1}). Then for any $\varepsilon>0$ small enough, there is
a $s_0:=s_0(\varepsilon)>0$ such that
for any $s\le s_0$,
\begin{equation*}
\log \beta(s)\le \varepsilon \log^{\frac{\delta}{\delta-1}}(1+s^{-1}).
\end{equation*}
Hence, there is a constant $C_7>0$ (independent of $\varepsilon$) such that
for every $\varepsilon>0$ and $s\ge1$,
\begin{equation*}
\log\Big(2\beta
\Big(\frac{1}{c_1 s^2 e^{2s}}\Big)\Big)\le C_7\varepsilon s^{\frac{\delta}{\delta-1}}+C_8(\varepsilon),
\end{equation*} where $C_8(\varepsilon)>0$ may depend on $\varepsilon$.
Let $F(r)$ be the function defined in Proposition \ref{prop1} (2). Therefore,
for every $r>0$ large enough and $\varepsilon>0$ small enough,
\begin{equation*}
\begin{split}
 F(r)&\ge \int_1^{\infty} \exp\Big\{r\lambda-(c_0+1)\lambda-\lambda\int_1^{\lambda}\frac{1}{s^2}\big(C_7\varepsilon
s^{\frac{\delta}{\delta-1}}+C_8(\varepsilon)\big)\,ds\Big\}\,d\lambda\\
&\ge \int_1^{\infty} \exp\Big\{-C_9\varepsilon \lambda^{\frac{\delta}{\delta-1}}+(r-C_{10}(\varepsilon))\lambda\Big\}\,d\lambda\\
&\ge \int_1^{\big(\frac{r}{2C_9\varepsilon}\big)^{\delta-1}}e^{(\frac{r}{2}-C_{11}(\varepsilon))\lambda}\,d\lambda,
\end{split}
\end{equation*}
where in the last inequality we have used the fact that if $\lambda \le (\frac{r}{2C_9\varepsilon})^{\delta-1}$, then $C_9\varepsilon\lambda^{\frac{1}{\delta-1}}\le {r}/{2}.$ The inequality above shows that, for any $\varepsilon>0$ small enough there are two constants $C_{12}>0$ (independent of $\varepsilon$ and $r$) and $C_{13}(\varepsilon)>0$ (independent of $r$) such that for $r>0$ large enough,
\begin{equation*}
F(r)\ge \frac{C_{13}(\varepsilon)}{r}\exp\Big[\Big(\frac{C_{12}}{\varepsilon}\Big)^{\delta-1}r^{\delta}\Big].
\end{equation*}
This, along with Proposition \ref{prop1} (2), yields that for any $\kappa>0$,
$$
\int e^{\kappa|x|^{\delta}}\,\mu_{V_\delta}(dx)<\infty,
$$
However, the statement above can not be true since $\mu_{V_\delta}(dx)=C_{\delta}e^{- (1+|x|^{\delta})}\,dx$. That is, there is a contradiction, so the super Poincar\'{e} inequality (\ref{e13}) does not hold
for $\mu_{V_\delta}$ with the rate function $\beta(s)$ satisfying (\ref{ex1.2.1}).

(3)  Let $\mu_{V_\theta}(dx)=C_{\theta}e^{- |x|\log^{\theta}(1+|x|)}\,dx=:C_{\theta}e^{-V_{\theta}(x)}\,dx$. Suppose that in this case the
super Poincar\'{e} inequality (\ref{e13}) holds. Then, according to Proposition \ref{prop1}, $\int e^{\lambda |x|}\,\mu_{V_\theta}(dx)<\infty$
for any $\lambda>0$, which implies the
super Poincar\'{e} inequality (\ref{e13}) holds for $\mu_{V_\theta}$ only with $\theta>0$.

On the other hand, for every
$\theta>0$, there exist two positive constants $C_1$ and $C_2$ such that for $|x|$ large enough,
\begin{equation*}
e^{V_{\theta}(x)}\inf_{|z|\leqslant
|x|-{1}/{2}}e^{-V_{\theta}(z)}\ge C_1e^{C_2\log^{\theta}(1+|x|)}.
\end{equation*}
Then, for $r$ large enough, we have
 $\Phi(r)\ge C_1 e^{C_2\log^{\theta}(1+r)}$. Therefore, by Theorem \ref{th3.1} (2), we can get that the super Poincar\'{e} (\ref{e13}) holds for $\mu_{V_\theta}$ with the rate function
$\beta(s)$ given by (\ref{ex1.2.2a}).

On the other hand, according to (\ref{ex1.2.2a}), we have $$\beta^{-1}(r)\le \exp\big\{-C_3\log^{\theta}\big(C_4(\log r+C_5)\big)\big\}$$
for $r$ large enough and some positive constants $C_i$ $(i=3,4,5)$. Let $\Psi(t)$ be the function defined by (\ref{e11aa}). Then, for $t$ large
enough, we have
\begin{equation*}
\begin{split}
\Psi(t) &\le \int_t^{\infty} \frac{1}{r\exp\big\{C_3\log^{\theta}\big(C_4(\log r+C_5)\big)\big\}}\,dr\\
& \le  \int_t^{\infty} \frac{\log^{\theta-1}\big(C_4(\log r +C_5)\big)}
{r(\log r +C_5)\exp\big\{C_6\log^{\theta}\big(C_4(\log r+C_5)\big)\big\}}\,dr\\
&=\frac{C_7}{\exp\big\{C_6\log^{\theta}\big(C_4(\log t+C_5)\big)\big\}},
\end{split}
\end{equation*}
where in the second inequality we have used the fact that if $\theta>1$, then
\begin{equation*}
\exp\big\{C_3\log^{\theta}\big(C_4(\log r+C_5)\big)\big\} \ge (\log r +C_5)\exp\big\{C_6\log^{\theta}\big(C_4(\log r+C_5)\big)\big\}
\end{equation*}
holds for $r$ large enough and some positive constants $C_3>C_6$. Combining the estimate above with (\ref{e11a}), we get the desired estimate for the associated semigroup $P_t^{\alpha,V_\theta}.$

Next, we assume that (\ref{e13}) holds for $\mu_{V_\theta}$ with the rate function
$\beta(s)$ satisfying (\ref{ex1.2.2}). Then for any $\varepsilon>0$ small enough, there is a constant $s_0:=s_0(\varepsilon)>0$ such that
for any $s\le s_0$,
\begin{equation*}
\log \beta(s)\le \exp\Big\{\varepsilon \log^{\frac{1}{\theta}}(1+s^{-1})\Big\}.
\end{equation*}
Hence for every $s\ge1$ and $\varepsilon>0$ small enough,
\begin{equation*}
\text{log}\Big(2\beta
\big(\frac{1}{c_1  s^2 e^{2s}}\big)\Big)\le
\exp\Big\{C_8\varepsilon s^{\frac{1}{\theta}}+C_9(\varepsilon)\Big\},
\end{equation*}
where $C_8>0$ is independent of $\varepsilon$, and $C_9(\varepsilon)>0$ may depend on $\varepsilon$.
Therefore, by the similar argument in the proof of part (2), for $r>0$ large enough and $\varepsilon>0$ small enough,
\begin{equation*}
\begin{split}
 F(r)&\ge \int_1^{\infty} \exp\Big\{r\lambda-(c_0+1)\lambda-\lambda\int_1^{\lambda}\frac{1}{s^2}
 \exp\Big\{C_8\varepsilon s^{\frac{1}{\theta}}+C_9(\varepsilon)\Big\}\,ds\Big\}\,d\lambda\\
&\ge \int_1^{\infty} \exp\Big\{-C_{10}(\varepsilon)\lambda e^{\varepsilon C_8\lambda^{\frac{1}{\theta}}}
+(r-C_{11}(\varepsilon))\lambda\Big\}\,d\lambda\\
&\ge \int_1^{\big(\frac{\log r -\log (2C_{10}(\varepsilon))}{C_8 \varepsilon}\big)^{\theta}}
e^{(\frac{r}{2}-C_{11}(\varepsilon))\lambda}\,d\lambda\\
&\ge \frac{C_{13}(\varepsilon)}{r}\exp\Big\{\frac{C_{12}}{\varepsilon^{\theta}}r\log^{\theta} r\Big\},
\end{split}
\end{equation*}
where $C_{12}>0$ is independent of $\varepsilon,r$, and $C_{13}(\varepsilon)>0$ is independent of $r$.
Thus, according to Proposition \ref{prop1} (2), for
any $\kappa>0$,
\begin{equation*}
\int e^{\kappa|x|\log^{\theta}(1+|x|)}\,\mu_{V_\theta}(dx)<\infty,
\end{equation*}
which however can not be true, since
$\mu_{V_\theta}(dx)=C_{\theta}e^{- |x|\log^{\theta}(1+|x|)}\,dx$.
This contradiction shows that the super Poincar\'{e} inequality
(\ref{e13}) does not hold for $\mu_{V_\theta}$ with the rate
function $\beta(s)$ satisfying (\ref{ex1.2.2}).\end{proof}

\subsection{Comparison of the Functional Inequalities for $\mathscr{E}_{\alpha,V}$ and
$D_{\rho,V}$} In this subsection, we aim to compare the criteria for
the Poincar\'{e} inequality and the super Poincar\'{e} inequality
between $\mathscr{E}_{\alpha,V}$ and $D_{\rho,V}$. First, we take
$\rho(r)=r^{-d-\alpha}e^{-\delta r}$ with $\alpha\in(0,2)$ and
$\delta\ge0$ in \eqref{dir1}, and set
$$D_{\alpha, \delta, V}(f,f):=\frac{1}{2}\iint\frac{(f(x)-f(y))^2}{|x-y|^{d+\alpha}}e^{-\delta|x-y|}\,dy\,\mu_V(dx).$$
We denote $D_{\alpha,0,V}$ by $D_{\alpha,V}$ for simplicity. Theorem
\ref{th3.1} yields the following
\begin{corollary} \label{corollary1}
Let $\alpha\in(0,2)$ and $\delta\in [0,\infty)$. For  any $a>0$, set
$\tilde V_a(x):=V(ax)$.

$(1)$ If there is a constant $a>0$ such that
\begin{equation}\label{e9a}
\liminf_{|x|\rightarrow \infty}\frac{\inf_{|x|-1 \le |z|\leqslant
|x|-{1}/{2}}e^{-\tilde V_a(z)}} {\sup_{|x| \le |z|\le
|x|+1}e^{-\tilde
V_a(z)}}>\frac{1}{\alpha}2^{2d+1}(e+e^{{1}/{2}})(2^{\alpha}-1),
\end{equation}
then there is a constant $c_1>0$ such that for any
$f\in C_b^\infty(\R^d)$,
\begin{equation*}
\begin{split}
\mu_V\big((f-\mu_V(f))^2\big) \leqslant c_1 D_{\alpha,\delta,V}(f,f).
\end{split}
\end{equation*}

$(2)$ Suppose there is a constant $a>0$ such that
\begin{equation}\label{e9b}\liminf_{|x|\rightarrow \infty}\frac{\inf_{|x|-1 \le |z|\leqslant
|x|-{1}/{2}}e^{-\tilde V_a(z)}} {\sup_{|x| \le |z|\le
|x|+1}e^{-\tilde V_a(z)}}=\infty.\end{equation} Let $\tilde
\beta_a(s)$ be the rate function defined by \eqref{e13a} with
$\tilde V_a(x):=V(ax)$ in place of $V(x)$. If moreover there is a
constant $c_2>0$ such that
\begin{equation}\label{e9c}
\tilde \beta_a(s)\le \exp\Big(c_2\big(1+{s}^{-1}\big)\Big), \quad
s>0,
\end{equation}
then the following log-Sobolev inequality holds
\begin{equation*}
\mu_V(f^2\log f^2)-\mu_V(f^2)\log\mu_V(f^2) \le
c_3D_{\alpha,\delta,V}(f,f),\quad f \in C_b^\infty(\R^d).
\end{equation*}
\end{corollary}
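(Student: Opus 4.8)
The plan is to deduce both parts of Corollary \ref{corollary1} from Theorem \ref{th3.1} applied to $\mathscr{E}_{\alpha,\tilde V_a}$, via a scaling argument, and then to compare $\mathscr{E}_{\alpha,\tilde V_a}$ (after undoing the scaling) with $D_{\alpha,\delta,V}$. For $a>0$ set $T_af(x):=f(ax)$. Since $\mu_{\tilde V_a}$ is, up to its normalising constant (which is automatically fixed because both are probability measures), the image of $\mu_V$ under $y\mapsto y/a$, one has $\mu_{\tilde V_a}(|T_af|^p)=\mu_V(|f|^p)$ for every $p\ge1$ and $f\in C_b^\infty(\R^d)$; in particular $\mu_{\tilde V_a}(T_af)=\mu_V(f)$. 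A change of variables ($u=ax$, $v=ay$) gives the identity
\[
\mathscr{E}_{\alpha,\tilde V_a}(T_af,T_af)=\frac{a^\alpha}{2}\iint_{\{|u-v|\le a\}}\frac{(f(u)-f(v))^2}{|u-v|^{d+\alpha}}\,dv\,\mu_V(du),
\]
and since $e^{-\delta|u-v|}\ge e^{-\delta a}$ on $\{|u-v|\le a\}$ while the integrand is nonnegative, dropping the part $\{|u-v|>a\}$ in $D_{\alpha,\delta,V}$ yields the key comparison
\[
\mathscr{E}_{\alpha,\tilde V_a}(T_af,T_af)\le a^\alpha e^{\delta a}\,D_{\alpha,\delta,V}(f,f),\qquad f\in C_b^\infty(\R^d).
\]

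For part (1): under \eqref{e9a}, Theorem \ref{th3.1}(1) applies to $\mathscr{E}_{\alpha,\tilde V_a}$ (note $\tilde V_a$ is again locally bounded with $e^{-\tilde V_a}\in L^1(dx)$), so $\mu_{\tilde V_a}(g^2)\le C_1\mathscr{E}_{\alpha,\tilde V_a}(g,g)$ for all $g\in C_b^\infty(\R^d)$ with $\mu_{\tilde V_a}(g)=0$. Taking $g=T_a\big(f-\mu_V(f)\big)=T_af-\mu_{\tilde V_a}(T_af)$, using that $\mathscr{E}_{\alpha,\tilde V_a}$ is unchanged under adding constants, and combining with the two displays above gives $\mu_V\big((f-\mu_V(f))^2\big)\le C_1 a^\alpha e^{\delta a}D_{\alpha,\delta,V}(f,f)$, i.e.\ the assertion with $c_1=C_1a^\alpha e^{\delta a}$.

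For part (2): since \eqref{e9b} is stronger than \eqref{e9a}, part (1) already yields the Poincar\'e inequality for $D_{\alpha,\delta,V}$. Moreover Theorem \ref{th3.1}(2) applied to $\mathscr{E}_{\alpha,\tilde V_a}$ gives $\mu_{\tilde V_a}(g^2)\le s\,\mathscr{E}_{\alpha,\tilde V_a}(g,g)+\tilde\beta_a(s)\,\mu_{\tilde V_a}(|g|)^2$ for all $s>0$ and $g\in C_b^\infty(\R^d)$, with $\tilde\beta_a$ given by \eqref{e13a} for $\tilde V_a$. Choosing $g=T_af$, inserting $\mu_{\tilde V_a}(|T_af|^p)=\mu_V(|f|^p)$ and the comparison above, and then replacing $s$ by $s\,a^{-\alpha}e^{-\delta a}$, we obtain a super Poincar\'e inequality for $D_{\alpha,\delta,V}$ with rate function $s\mapsto\tilde\beta_a\big(s\,a^{-\alpha}e^{-\delta a}\big)$. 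By \eqref{e9c} this rate function is bounded above by $\exp\big(c_2(1+a^\alpha e^{\delta a}s^{-1})\big)\le\exp\big(c(1+s^{-1})\big)$ for a suitable $c>0$. A super Poincar\'e inequality with such a rate function implies the defective log-Sobolev inequality, which together with the Poincar\'e inequality from part (1) yields the tight log-Sobolev inequality (see e.g.\ \cite{WBook}); applied to $D_{\alpha,\delta,V}$ this is the desired conclusion.

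The computations here are routine: the only points needing care are the bookkeeping of the scaling constants in the change of variables (in particular the factor $a^\alpha$ and the truncation radius changing from $1$ to $a$) and the correct invocation of the super-Poincar\'e/log-Sobolev passage, which uses the Poincar\'e inequality of part (1) to move from the defective to the tight form. I do not expect a genuine obstacle.
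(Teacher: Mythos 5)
Your proposal is correct and follows essentially the same route as the paper: prove the change-of-variables identity for $\mathscr{E}_{\alpha,\tilde V_a}$ under $T_a$, bound the resulting integral over $\{|u-v|\le a\}$ by $e^{\delta a}D_{\alpha,\delta,V}(f,f)$, and apply Theorem \ref{th3.1} to $\tilde V_a$. The only cosmetic difference is in part (2): you first transport the super Poincar\'e inequality to $D_{\alpha,\delta,V}$ and then invoke the super-Poincar\'e-to-defective-log-Sobolev criterion, whereas the paper first derives the defective log-Sobolev for $\mathscr{E}_{\alpha,\tilde V_a}$ and then scales (thereby recording the extra $-d\log a$ shift in the defective constant); both orderings lead to the same conclusion after tightening with the Poincar\'e inequality from part (1).
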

\begin{proof}

(a) For any function $f\in C_b^{\infty}(\R^d)$ with $\int f
\,d\mu_V=0$, define $\tilde f(x):=f(ax)$ for all $x \in \R^d$. By
changing the variable, it is easy to check that $\int \tilde f
\,d\mu_{\tilde V_a}=0$. According to (\ref{e9a}) and Theorem
\ref{th3.1} (1), we know that
 \begin{equation*}
\int \tilde f^2(x)\,\mu_{\tilde V_a}(dx)\le \frac{C_0}{2}
\iint_{\{|x-y|\le 1\}}\frac{(\tilde f(x)-\tilde f(y))^2}
{|x-y|^{d+\alpha}}\,dy\,\mu_{\tilde V_a}(dx)
\end{equation*}
holds for some constant $C_0>0$ independent of $f$. Then, by changing the variable again, we arrive at
\begin{equation*}
\int  f^2(x)\,\mu_{V}(dx)\le \frac{a^{\alpha}C_0}{2} \iint_{\{|x-y|\le a\}}\frac{( f(x)-f(y))^2}
{|x-y|^{d+\alpha}}\,dy\,\mu_{ V}(dx).
\end{equation*}
Combining this inequality with the fact that
\begin{equation}\label{e15a}
\frac{1}{2}\iint_{\{|x-y|\le a\}}\frac{( f(x)- f(y))^2}
{|x-y|^{d+\alpha}}\,dy\,\mu_{ V}(dx) \le e^{a\delta} D_{\alpha, \delta, V}(f,f),
\end{equation}
we can get the first required conclusion.

(b) Suppose that \eqref{e9b} holds and the rate function
$\tilde\beta_a(s)$ defined by (\ref{e13a}) with respect to $\tilde
V_a(x)$ satisfies (\ref{e9c}). By Theorem \ref{th3.1} (2) and
\cite[Corollary 3.3.4]{WBook} (see also \cite[Corollary 3.3]{W1}),
the following defective log-Sobolev inequality holds for any $g\in
C_b^{\infty}(\R^d)$,
\begin{equation}\label{e15}
\begin{split}
\mu_{\tilde V_a}(g^2\log g^2)-&\mu_{\tilde V_a}(g^2)\log\mu_{\tilde V_a}(g^2)\\
& \le C_1 \iint_{\{|x-y|\le 1\}}\frac{(g(x)-g(y))^2}
{|x-y|^{d+\alpha}}\,dy\,\mu_{\tilde V_a}(dx)+C_2\mu_{\tilde
V_a}(g^2),
\end{split}
\end{equation}
where $C_1$ and $C_2$ are two positive constants. Hence, for any
$f\in C_b^{\infty}(\R^d)$, by applying $\tilde f(x):=f(ax)$ into
(\ref{e15}) and by the change of variable and (\ref{e15a}), we get
that
\begin{equation}\label{e16}
\begin{split}
\mu_{V}(f^2\log f^2)-&\mu_{V}(f^2)\log\mu_{V}(f^2)\\
& \le 2a^{\alpha}e^{a\delta}C_1 D_{\alpha,\delta,V}(f,f)+(C_2-d\log
a)\mu_{V}(f^2).
\end{split}
\end{equation}

  If $C_2-d\log a\le 0$, then, by (\ref{e16}), we get the second required conclusion. If
$C_2-d\log a>0 $, then (\ref{e16}) indeed is a defective log-Sobolev
inequality. On the other hand, according to \eqref{e9b} and (1), we
know that the Poincar\'{e} inequality holds for
$D_{\alpha,\delta,V}(f,f)$, which along with (\ref{e16}) yields the
real log-Sobolev inequality, e.g.\ see \cite[Theorem 5.1.8]{WBook}.

\end{proof}
Corollary \ref{corollary1} improves \cite[Theorem 1.1]{CW} for ${D}_{\alpha,\delta,V}$ when
$\delta>0$ large enough. The detail also can be seen from the
following example.

\begin{example}\label{exm0} (1)
Let $\mu_V(dx):=\mu_\lambda(dx)=C_{\lambda}e^{-\lambda|x|}\,dx$ with
$\lambda>0$. Then, (\ref{e9a}) is satisfied for such $\mu_V$, and
hence the Poincar\'{e} inequality holds for $D_{\alpha,\delta,V}$
with any $\delta\ge0$; while \cite[Theorem 1.1]{CW} only yields that
the Poincar\'{e} inequality holds for $D_{\alpha,\delta,V}$ with
$\delta\in[0,\lambda]$.

(2) Let $\mu_V(dx):=C_{\lambda}e^{-\lambda|x|\log(1+|x|)}\,dx$ with
$\lambda>0$. Then,  (\ref{e9b}) and \eqref{e9c} hold for such
$\mu_V$, and hence the log-Sobolev inequality holds for
$D_{\alpha,\delta,V}$ with any $\delta\ge0$.
\end{example}

\begin{remark}\label{exm0.2} Indeed, according to the arguments of Example \ref{Ex1} and Corollary
\ref{corollary1}, we can find the following two statements: (i) Let
$\mu_{V_{\lambda}}(dx):=C_{\lambda}e^{-\lambda|x|}\,dx$ with
$\lambda>0$. Then, there are two positive constants $a_1$ and $C_1$
(may depend on $\lambda$) such that
\begin{equation*}
\mu_{V_{\lambda}}\big(f^2)-\mu_{V_{\lambda}}(f)^2\le
C_1\iint_{\{|x-y|\le a_1\}}\frac{( f(x)- f(y))^2}
{|x-y|^{d+\alpha}}\,dy\,\mu_{ V_{\lambda}}(dx),\quad\ f\in
C_b^{\infty}(\R^d).
\end{equation*}
(ii) Let $\mu_{ \hat
V_{\lambda}}(dx):=C_{\lambda}e^{-\lambda|x|\log(1+|x|)}\,dx$ with
$\lambda>0$. Then, there are two positive constants $a_2$ and $C_2$
(may depend on $\lambda$) such that
\begin{equation*}
\begin{split}
\mu_{\hat V_{\lambda}}\big(f^2\log f^2\big)-&
\mu_{\hat V_{\lambda}}(f^2)\log \mu_{\hat V_{\lambda}}(f^2)\\
&\le C_2\iint_{\{|x-y|\le a_2\}}\frac{( f(x)- f(y))^2}
{|x-y|^{d+\alpha}}\,dy\,\mu_{ \hat V_{\lambda}}(dx),\quad \ f\in
C_b^{\infty}(\R^d).
\end{split}
\end{equation*}
In particular, a close inspection of the computation in Example
\ref{Ex1} shows that, if $\lambda$ is large enough then one can take
both the jump sizes $a_1$ and $a_2$ in two inequalities above to be
less than 1; however, for small $\lambda$ we can not expect the jump
sizes $a_1$ and $a_2$ to be less than $1$.
\end{remark}

\medskip

 To compare the different properties of the functional inequalities for
 $D_{\alpha,\delta,V}$ and $\mathscr{E}_{\alpha,V}$, we will take the following three examples.

\begin{example}\label{exm1} [{\bf Poincar\'{e} inequalities and super Poincar\'{e} inequalities hold for ${D}_{\alpha,V}$ but not for ${D}_{\alpha,\delta,V}$ with $\delta>0$ and $\mathscr{E}_{\alpha,V}$.}]
Let
$\mu_{V_{\varepsilon}}(dx):=C_\varepsilon(1+|x|)^{d+\varepsilon}\,dx$
with $\varepsilon\ge\alpha$. Then, according to \cite[Corollary
1.2]{WW}, the Poincar\'{e} inequality holds for
$D_{\alpha,V_{\varepsilon}}$ with $\varepsilon\ge \alpha$; and the
super Poincar\'{e} inequality holds for $D_{\alpha,V_{\varepsilon}}$
with $\varepsilon> \alpha$. However, by \cite[Proposition 1.3]{CW}, the Poincar\'{e}
inequality and so the super Poincar\'{e} inequality do not hold for
${D}_{\alpha,\delta, V_\varepsilon}$ with any $\delta>0$. On the other hand, according to Proposition \ref{prop1}, Poincar\'{e} inequality and the super Poincar\'{e} inequality either do not hold for
$\mathscr{E}_{\alpha,V_{\varepsilon}}$.
\end{example}

\begin{example}\label{exm2} [{\bf Super Poincar\'{e} inequalities hold for
${D}_{\alpha,\delta,V}$ with $\delta>0$ but not
$\mathscr{E}_{\alpha,V}$.}]
Let $\mu_{V_{\lambda}}(dx):=C_{\lambda}e^{-\lambda|x|}\,dx$ with $\lambda>0$. For
every $0<\delta<\lambda$, according to \cite[Lemma 4.3]{CW} and the
argument of \cite[Theorem 1.1 (2)]{WW}, the super Poincar\'{e}
inequality holds for $D_{\alpha, \delta, V_{\lambda}}$ with the rate
function $\beta(s)=c_1(1+s^{-p_1})$
for some positive
constants $c_1$ and $p_1$. However, by Proposition \ref{prop1}, the
super Poincar\'{e} inequality does not hold for
$\mathscr{E}_{\alpha,V_{\lambda}}$.
\end{example}

\begin{example}\label{exm3} [{\bf Super Poincar\'{e} inequalities hold for both
${D}_{\alpha,\delta,V}$ and
$\mathscr{E}_{\alpha,V}$, but with different rate function.}]
Let $\mu_{V_{\kappa}}(dx):=C_{\kappa}e^{-(1+|x|^{\kappa})}\,dx$ with
$\kappa>1$. Also according to \cite[Lemma 4.3]{CW} and the argument
 of \cite[Theorem 1.1 (2)]{WW}, the super Poincar\'{e} inequality holds for
$D_{\alpha, \delta, V_{\kappa}}$ with the rate function
$\beta(s)=c_2(1+s^{-p_2})$
for some positive constants
$c_2$ and $p_2$. On the other hand, according to Example \ref{Ex1}
(2), the super Poincar\'{e} holds for
$\mathscr{E}_{\alpha,V_{\kappa}}$ with the rate function
\begin{equation*}
\beta(s)=c_3\exp\Big(c_4\big(1+\log^{{\kappa}/{(\kappa-1)}}(1+{s}^{-1})\big)\Big).
\end{equation*}
\end{example}

\section{Functional Inequalities for Non-local Dirichlet Forms with large Jumps}\label{section4}
\begin{proof}[Proof of Theorem \ref{th3.2}]
(1) The proof of \eqref{th3.2.2}  is almost the same as that of \cite[Theorem 3.6]{CW}. For reader's convenience,
here we write it in detail. Let $ L_{\mathscr{D}_{\alpha,V}} $ be the generator associated with $\mathscr{D}_{\alpha,V}$.
Then, according to \cite[Lemma 4.2]{CW}, we know that for any $f\in C_c^\infty(\R^d)$,
$$L_{\mathscr{D}_{\alpha,V}}f(x)=\frac{1}{2}\int_{\{|z|>1\}} \big (f(x+z)-f(x)\big)
\big(e^{V(x)-V(x+z)}+1\big)\,\frac{dz}{|z|^{d+\alpha}}.$$
For $\alpha_0\in (0,1)$, let $\phi\in C^{\infty}(\R^d)$ such that $\phi \geqslant 1$ and $\phi(x)=1+|x|^{\alpha_0}$
for $|x|>1$.
By (\ref{th3.2.1}) and \cite[Lemma 4.3]{CW}, $L_{\mathscr{D}_{\alpha,V}}\phi$ is well defined, and there exist $r_0$, $c_1$ and $c_2>0$ such
that
$$
L_{\mathscr{D}_{\alpha,V}}\phi(x)\leqslant -c_1\frac{e^{V(x)}}{(1+|x|)^{d+\alpha}}\phi(x)+c_2\I_{B(0,r_0)}(x).
$$
This, along with \cite[Propisition 3.2]{CW}, yields that there are $c_3, c_4>0$ such that for any $f\in
C_b^\infty(\R^d)$,
\begin{equation*}
\int f(x)^2\frac{e^{V(x)}}{(1+|x|)^{d+\alpha}}\,\mu_V(dx)\leqslant c_3 \mathscr{D}_{\alpha,V}(f,f)+c_4\int_{B(0,r_0)}f^2\phi^{-1}\,d\mu_V.
\end{equation*} In particular, for any $f\in C_b^\infty(\R^d)$ with $\mu_V(f)=0$,  \begin{equation*}
\int f(x)^2\frac{e^{V(x)}}{(1+|x|)^{d+\alpha}}\,\mu_V(dx)\leqslant c_3 \mathscr{D}_{\alpha,V}(f,f)+c_4\int_{B(0,r_0)}f^2\phi^{-1}\,d\mu_V.
\end{equation*}

On the other hand, since $\phi\geqslant 1$, by the local Poincar\'{e}
inequality \eqref{pr2.4.1}, there is a constant $c_5>0$ such that for any $r>r_0\vee 3$,
\begin{align*}\int_{B(0,r_0)}f^2\phi^{-1}\,d\mu_V&\leqslant \int_{B(0,r_0)}f^2\,d\mu_V\\
&\leqslant \int_{B(0,r)}f^2\,d\mu_V\\
&\le \frac{c_5K(r)r^{2d+\alpha}}{k(r)}\mathscr{D}_{\alpha,V}(f,f)+\frac{1}{\mu_V(B(0,r))}\Big(\int_{B(0,r)}f\,d\mu_V\Big)^2\\
&=\frac{c_5K(r)r^{2d+\alpha}}{k(r)}\mathscr{D}_{\alpha,V}(f,f)+\frac{1}{\mu_V(B(0,r))}\Big(\int_{B(0,r)^c}f\,d\mu_V\Big)^2,\end{align*}
where in the equality above we have used the fact that
$$\int_{B(0,r)}f\,d\mu_V=-\int_{B(0,r)^c}f\,d\mu_V.$$
Using the Cauchy-Schwarz inequality, we find
\begin{align*}\Big(\int_{B(0,r)^c}f\,d\mu_V\Big)^2\leqslant &\Big(\int_{B(0,r)^c} \frac{(1+|x|)^{d+\alpha}}{e^{V(x)}}\,\mu_V(dx)\Big)\int_{B(0,r)^c}f(x)^2\frac{e^{V(x)}}{(1+|x|)^{d+\alpha}}\,\mu_V(dx).\end{align*}

Therefore, for any $f\in C_b^\infty(\R^d)$ with $\int f\,d\mu_V=0$
and any $r\geqslant r_0\vee 3$,
\begin{align*}&\int f(x)^2\frac{e^{V(x)}}{(1+|x|)^{d+\alpha}}\,\mu_V(dx)\\
&\leqslant \left(c_3+\frac{c_6K(r)r^{2d+\alpha}}{k(r)}\right)\mathscr{D}_{\alpha,V}(f,f)\\
&\quad+\frac{c_6\int_{B(0,r)^c} \frac{(1+|x|)^{d+\alpha}}{e^{V(x)}}\,\mu_V(dx) }{\mu_V(B(0,r))}\int f(x)^2\frac{e^{V(x)}}{(1+|x|)^{d+\alpha}}\,\mu_V(dx).\end{align*}
Due to \eqref{th3.2.1}, $\int \frac{(1+|x|)^{d+\alpha}}{e^{V(x)}}\,\mu_V(dx)<\infty$, and so we can choose $r_1\ge r_0\vee3$ large
enough such that $$\frac{c_6\int_{B(0,r_1)^c} \frac{(1+|x|)^{d+\alpha}}{e^{V(x)}}\,\mu_V(dx) }{\mu_V(B(0,r_1))}\leqslant 1/2,$$ which gives us the
inequality \eqref{th3.2.2} with $C_1=2\left(c_3+\frac{c_6K(r_1)r_1^{2d+\alpha}}{k(r_1)}\right)$.

(2) Let $D$ be a bounded compact subset of $\R^d$. For any $f\in C_c^\infty(\R^d)$ such that supp$f\subset D$, we find
\begin{align*}\mathscr{D}_{\alpha,V}(f,f)=&\frac{1}{2}\iint_{\{D\times D,|x-y|>1\}} \frac{(f(x)-f(y))^2}{|x-y|^{d+\alpha}}\,dy\,\mu_V(dx)\\
&+\frac{1}{2}\iint_{\{D \times D^c, |x-y|>1\}}\frac{(f(x)-f(y))^2}{|x-y|^{d+\alpha}}\,dy\,\mu_V(dx)\\
&+\frac{1}{2}\iint_{\{D^c \times D, |x-y|>1\}}\frac{(f(x)-f(y))^2}{|x-y|^{d+\alpha}}\,dy\,\mu_V(dx)\\
\le&\iint_{\{D\times D,|x-y|>1\}}\frac{f^2(x)}{|x-y|^{d+\alpha}}\,dy\,\mu_V(dx)\\
&+\iint_{\{D\times D, |x-y|>1\}}\frac{f^2(y)}{|x-y|^{d+\alpha}}\,dy\,\mu_V(dx)\\
&+\frac{1}{2}\iint_{\{D\times D^c,|x-y|>1\}}\frac{f^2(x)}{|x-y|^{d+\alpha}}\,dy\,\mu_V(dx)\\
&+\frac{1}{2}\iint_{\{D^c\times D, |x-y|>1\}}\frac{f^2(y)}{|x-y|^{d+\alpha}}\,dy\,\mu_V(dx)\\
=&:\sum_{i=1}^4 J_i.
\end{align*}
Note that
\begin{equation*}
\begin{split}
 J_1 &\le  \int_{D} \bigg(\int_{\{|x-y|>1\}}\frac{1}{|x-y|^{d+\alpha}}\,dy\bigg)f^2(x)\,\mu_V(dx)\\
&\le \Big(\int_{\{|z|>1\}}\frac{1}{|z|^{d+\alpha}}\,dz\Big)\int_D
f^2(x)\,\mu_V(dx).
\end{split}
\end{equation*}
Since $$\int_{\{|x-y|>1\}}\frac{1}{|x-y|^{d+\alpha}}\,\mu_V(dx)\le
\int \mu_V(dx)=1,$$ we have
\begin{equation*}
\begin{split}
J_2 & \le \int_{D} \bigg(\int_{\{|x-y|>1\}}\frac{1}{|x-y|^{d+\alpha}}\,\mu_V(dx)\bigg)f^2(y)\,dy\\
&\le \Big(C_V^{-1}\sup_{y\in D}e^{V(y)}\Big)\int_D
f^2(y)\,\mu_V(dy).
\end{split}
\end{equation*}
Following the same way as above, we can get the similar estimates
for $J_3$ and $J_4$, respectively. Therefore, for every $f\in
C_c^{\infty}(\R^d)$ with supp$f\subset D$,
\begin{equation*}
\mathscr{D}_{\alpha,V}(f,f)\le C_{V,D}\mu_V(f^2),
\end{equation*}
where $C_{V,D}$ is a positive constant independent of $f$.

Thus, according to (\ref{th3.2.3}), for every $f\in
C_c^{\infty}(\R^d)$ with supp$f\subset D$,
\begin{equation*}
\begin{split}
&\mu_V(f^2) \le sC_{V,D}\mu_V(f^2)+\beta(s)\mu(|f|)^2.
\end{split}
\end{equation*}
By taking $s=\frac{1}{2C_{V,D}}$, we derive that
\begin{equation}\label{e17}
\mu_V(f^2)\le 2\beta\left(\frac{1}{2C_{V,D}}\right)\mu_V(|f|)^2.
\end{equation}

On the other hand, since the function $V$ is locally bounded, there
exist a point $x_0 \in D$ and a constant $r_0>0$ such that
$B(x_0,r_0)\subset D$, and $$\int_{B(x_0,r_0)}\mu_V(dx)\le
\left[{4\beta\left(\frac{1}{2C_{V,D}}\right)}\right]^{-1}.$$ Let
$f_0 \in C_c^{\infty}(\R^d)$ such that supp$f_0\subset B(x_0,r_0)$
and $f_0(x)>0$ for every $x \in B(x_0,{r_0}/{2})$. Hence, by the
Cauchy-Schwartz inequality,
\begin{equation*}
\mu_V(|f_0|)^2=\mu_V(|f_0|\I_{B(x_0,r_0)})^2\le
\mu_V(f_0^2)\mu_V(B(x_0,r_0)) \le
\frac{\mu_V(f_0^2)}{4\beta(\frac{1}{2C_{V,D}})}.
\end{equation*}
This along with (\ref{e17}) yields that
\begin{equation*}
\mu_V(f_0^2)\le \frac{1}{2}\mu_V(f_0^2).
\end{equation*}
However, due to the fact that $f_0(x)>0$ for $x\in
B(x_0,{r_0}/{2})$, $\mu_V(f_0^2)\neq 0$, which is a
contradiction, and so the super Poincar\'{e} inequality
(\ref{th3.2.3}) does not hold for $\mathscr{D}_{\alpha,V}$.
\end{proof}

\begin{remark} \label{remf} (1)
As the same way, we also can prove that the super Poincar\'{e}
inequality does not hold for the following Dirichlet form
\begin{equation*}
\mathscr{D}_{\rho,V}(f,f):=\frac{1}{2}\iint(f(x)-f(y))^2\rho(|x-y|)\,dy\, \mu_V(dx),
\end{equation*}
where $\rho$ is a positive measurable function on $\R_+$ such that
$\int_{(0,\infty)} \rho(r)r^{d-1}\,dr<\infty$ and $\sup\rho(r)<\infty$.

(2) As shown in Theorem \ref{th3.2} (1), if (\ref{th3.2.1}) holds,
then we can get the weighted Poincar\'{e} inequality for
$\mathscr{D}_{\alpha,V}$. However, different from the case for ${D}_{\alpha,V}$ (see \cite[Proposition 1.6]{CW}) and due to the lack of local super
Poincar\'{e} inequality for $\mathscr{D}_{\alpha,V}$, the global super Poincar\'{e} inequality
fails for $\mathscr{D}_{\alpha,V}$, which reveals that in some
situations, to derive the global super Poincar\'{e} inequality, the
local super Poincar\'{e} inequality is inevitable.
\end{remark}

\bigskip

\noindent{\bf Acknowledgements.} The authors would like to thank the referee and Professor Feng-Yu Wang for comments and suggestions. Financial support through the
project ``Probabilistic approach to finite and infinite dimensional
dynamical systems'' funded by the Portuguese Science Foundation
(FCT) (No.\ PTDC/MAT/104173/2008) (for Xin Chen), and National Natural Science Foundation of China (No.\ 11201073) and the Program for New Century Excellent Talents in Universities
of Fujian (No.\ JA11051 and JA12053)
(for Jian Wang) are gratefully acknowledged.

\end{document}